\renewcommand{\S}{\ensuremath{\mathfrak{S}}}
\newcommand{\p}{\ensuremath{\mathcal{P}}}
\def\std{\mathrm{std}}
\def\N{{\mathbb N}}
\def\ta{\mathtt a}
\def\WSym{\mathbf{WSym}}
\def\BWSym{\mathbf{BWSym}}
\def\PiQSym{\mathrm{\Pi QSym}}
\def\CWSym{\mathbf{CWSym}}  
\def\BPiQSym{\mathrm{B\Pi QSym}}
\def\CPiQSym{\mathrm{C\Pi QSym}}
\def\block{\mathrm{block}}
\def\numb{\mathrm{num}}
\newcommand{\arinj}{\ar@{_{(}->}} 
\newcommand{\arinjrev}{\ar@{^{(}->}} 
\newcommand{\arsurj}{\ar@{->>}} 
\newcommand{\arelem}{\ar@{|->}} 
\newcommand{\arback}{\ar@{<-}} 
\newcommand\id{\operatorname{id}} 
\newtheorem{theorem}{Theorem}
\newtheorem{corollary}[theorem]{Corollary}
\newtheorem{definition}[theorem]{Definition}
\newtheorem{example}[theorem]{Example}
\newtheorem{lemma}[theorem]{Lemma}
\newtheorem{proposition}[theorem]{Proposition}
\newtheorem{remark}[theorem]{Remark}
\newenvironment{proof}[1][Proof]{\noindent\textbf{#1.} }{\ \rule{0.5em}{0.5em}}
  \newcommand{\keywords}[1]{%
  \par\smallskip
  \noindent\textbf{Keywords: }#1
}
\definecolor{halfgray}{gray}{0.55} 
\definecolor{webgreen}{rgb}{0,.5,0}
\definecolor{webbrown}{rgb}{.6,0,0}
\definecolor{Noir}{cmyk}{0, 0, 0, 0}
\definecolor{Bleu}{cmyk}{100, 72, 0, 18}
\definecolor{Rouge}{cmyk}{0, 100, 80, 2}
\tikzstyle{Arete}=[Rouge!80,cap=round,line width=3pt]
\tikzstyle{Feuille}=[rectangle,draw=Noir!70,fill=Noir!20,minimum size=3mm,line width=2pt]
\tikzstyle{Operateur} = [rectangle,rounded corners,draw=Bleu!100,fill=Bleu!10,
\newcounter{Edge}
\newcounter{Vertex}
\def\bugdxuu#1#2#3
\def\bugdduu#1#2#3
\def\bugddux#1#2#3
\def\bugdxuu#1#2#3
\def\bugdu#1#2#3
\def\bugddux#1#2#3
\def\bugxdxu#1#2#3
\def\bugdxxx#1#2#3
\def\bugddxuux#1#2#3
\def\bugddduux#1#2#3
\def\bugddduuu#1#2#3
\def\bugxdxxuu #1#2#3
\def\bugxx#1#2#3
\def\bugxu#1#2#3
\def\bugdx#1#2#3
\def\bugddxx#1#2#3
\def\bugxdxx#1#2#3
\def\bugxxxx#1#2#3
\def\mcomp#1#2{{\displaystyle\mathop\medstar^{#2}_{#1}}}
\def\comp#1#2{{\displaystyle\mathop\star^{#2}_{#1}}}
\title{Hopf Algebras of 
$B$-Diagrams and Boson Normal Ordering: Exploring the Dual Structures}
 \author{Ali Chouria%
  \thanks{Electronic address: \texttt{ali.chouria1@univ-rouen.fr};}} 
\affil{University of Jendouba, ISLAIB,\\
UR 22ES12: Modeling Optimization and Augmented Engineering, 9000 Béja,
Tunisia. \\ GR$^2$IF, University of Rouen Normandie,France.}
\author{Jean-Gabriel Luque%
  \thanks{Electronic address: \texttt{jeangabriel.luque@univ-rouen.fr} Corresponding author}}
\affil{GR$^2$IF, University of Rouen Normandie,\\ Avenue de l’universit\'{e}, 76801 Saint-\'{E}tienne-du-Rouvray, France.}
\def\eqref#1{(\ref{#1})}
\def\shuffle{\sqcup\mathchoice{\mkern-8mu}{\mkern-8mu}{\mkern-3.2mu}{\mkern-3.8mu}\sqcup}
\begin{document}
\maketitle

\begin{abstract}
We consider the Hopf algebra of $B$-diagrams as an algebra projecting onto the Heisenberg algebra and designed to encode the combinatorics of the bosonic normal-ordering problem. 
In order to understand and generalize the properties of the algebra of noncommutative symmetric polynomials viewed as a Hopf subalgebra of
the Hopf algebra linearly spanned by $B$-diagrams, we describe and study its dual Hopf algebra. 
This construction also allows us to establish connections with combinatorial Hopf algebras based on colored set partitions.
\end{abstract}

\keywords{Combinatorial Hopf Algebras, Normal Ordering Problem, Word Symmetric Functions, Bell numbers}

\section{Introduction}
For bosons, creation and annihilation operators first arise in the quantum harmonic oscillator as raising and lowering operators, and are later extended to quantum field theory as field operators (see, e.g. the introductory chapter of \cite{Mahan}). Within this framework, quantum many-body states are described in a Fock basis, obtained by assigning occupation numbers to each single particle state. The second-quantization formalism introduces creation ($a^\dag$) and annihilation ($a$) operators  to generate and manipulate these Fock states.
These operators satisfy the relation
\begin{equation}\label{eq_aa}
[a,a^\dag] = 1.
\end{equation}
The algebra generated by the operator $a^\dag$ and $a$ is called the Heisenberg Algebra and is preciselly defined by generators and relation following equality \eqref{eq_aa}.
The normal-ordering problem consists in characterizing an operator of the Heisenberg algebra in a unique way by expressing it as a linear combination of operators of the form $(a^\dagger)^n a^m$. The underlying mechanism relies on the repeated use of equality~\eqref{eq_aa} and gives rise to a remarkably rich combinatorial structure. This combinatorial structure was first uncovered by Katriel~\cite{Katriel}, who showed that applying the normal-ordering problem to operators of the form $(a^\dag a)^n$ leads to the Stirling numbers of the second kind $S_{n,k}$, which count the number of partitions of an $n$-element set into $k$ blocks:
\begin{equation}\label{eq-Katriel}
(a^\dag a)^n=\sum_{k=1}^nS_{n,k}(a^\dag)^ka^k.
\end{equation}
In a similar spirit, Blasiak and several coauthors explored, in a series of papers
\cite{PHDBlasiak,BPS,BPS2,BHPSD0,BPDSHP}, the combinatorial interpretations of the bosonic normal-ordering problem. 
The combinatorial objects they introduced, called \emph{bugs}, generalize set partitions in a certain sense, and suitable gluing constructions between these objects allow one to interpret the product of monomials in the Heisenberg algebra. 
In particular, the paper~\cite{BPDSHP} exhibits a Hopf algebra structure whose bases are indexed by bugs and whose product projects onto the product of the Heisenberg algebra. In a previous paper~\cite{BCL2016}, we investigated another Hopf-algebraic construction, closely related to that of Blasiak \emph{et al.}, but somewhat more general and, above all, connected to other families of combinatorial Hopf algebras existing in the literature. In this algebra, the combinatorial objects are $B$-diagrams (which play the role of the bugs), and their branching and decomposition rules endow the space they span with a Hopf algebra structure. In this setting, Katriel’s formula arises from the projection of a subalgebra isomorphic to the algebra of noncommutative symmetric functions introduced by Wolf~\cite{Wolf}, whose Hopf algebra structure was later described by Rosas and Sagan~\cite{RS}. 
This algebra also appears in the literature under the name $\WSym$ (see, for instance, \cite{HNT2008}).
Rather surprisingly, this algebra turns out to be a Hopf subalgebra of the Hopf algebra $\mathcal B$ of $B$-diagrams, and its dual Hopf algebra $\PiQSym$ is, in a natural way, isomorphic to a Hopf subalgebra of the dual Hopf algebra $\mathcal B^*$ of $\mathcal B$. 
Note that this is not the configuration one would ordinarily expect, since under duality arrows are reversed and the usual construction would rather lead to a quotient of $\mathcal B^*$ isomorphic to $\PiQSym$. 
This observation motivated us to complete the description of the Hopf algebras introduced in our previous paper \cite{BCL2016} by placing a particular emphasis on the notion of duality.

 The precise algebraic framework in which our observations take place is described in Section~\ref{sec-background}. 
The Hopf algebras we consider are cocommutative Hopf algebras of noncommutative polynomials, and the property of interest is that the product in the dual can be described explicitly on the original space, in a way similar to the shuffle product for free associative algebra. 
Lemma~\ref{lemsubdual} provides the general formula for this dual product. 
Proposition~\ref{prop_subdual} formalizes the fact that, under suitable conditions, a Hopf subalgebra and its dual can be realized explicitly on the same underlying space, which is itself a subspace of the original algebra. 
This generalizes the observation made for $\WSym$ and its dual $\PiQSym$, viewed as Hopf subalgebras of $\mathcal B$ and $\mathcal B^*$, respectively. In Section~\ref{sec-hopfdiag}, we first recall the original construction of the combinatorial objects ($B$-diagrams) and of the algebra $\mathcal B$. 
We take this opportunity to improve and simplify the notation introduced in~\cite{BCL2016}, in order to make it more readable and to streamline the constructions. 
The Hopf algebra structure of $\mathcal B$ is described in Subsection~\ref{sec-HopfB}, while the structure of its dual Hopf algebra is given in Subsection~\ref{subsec-dualbalg}. 
One of the key results of Section~\ref{sec-hopfdiag} is the introduction, in Subsection~\ref{sec-multbas}, of a multiplicative basis of $\mathcal B$. 
It is in this basis that the noncommutative polynomial Hopf algebra structure will be described in a natural way, making it possible, later on, to explicitly implement Proposition~\ref{prop_subdual}. The connection between the algebra $\mathcal B$ and families of “classical” combinatorial Hopf algebras is made explicit in Section~\ref{sec-diagtopart} through colored set partitions and the Hopf algebras introduced in Subsection~\ref{subsec-colorpart}. 
Note that the previous subsection (Subsection~\ref{subsec-wsym}) revisits in more detail the observation made earlier about $\WSym$. 
This construction will serve as a model in what follows, as it is generalized to Hopf algebras of colored set partitions. 
The main result of this section is Theorem~\ref{CPIQ} in Subsection~\ref{subsec-finiteset}. 
It is this result that establishes the link between the Hopf subalgebras of $\mathcal B$ that are isomorphic to Hopf algebras of colored set partitions and Proposition~\ref{prop_subdual}. Finally, we illustrate the construction on several examples in Section~\ref{sec-examples}.

\section{Background and preliminary results on Hopf algebras\label{sec-background}}
Throughout the paper, unless otherwise specified, all structures (algebras, coalgebras, bialgebras) are taken over complex vector spaces.
\subsection{Bialgebras and Hopf algebras}

Bialgebras and Hopf algebras (see \cite{Hopf2020} and \cite{HNT2008}) are algebraic structures that naturally appears in various domains such as quantum group theory, noncommutative geometry, and the study of symmetries in algebraic topology. 
Intuitively, a bialgebra combines both an algebra and a coalgebra structure on the same vector space in a compatible manner, while a Hopf algebra introduces an additional map $S$, called the antipode, which behaves as a generalized inverse for the convolution product.

A bialgebra $H$ is a vector space endowed simultaneously with an algebra structure $(m, u)$ and a coalgebra structure $(\Delta, \varepsilon)$ that are compatible in the sense that $\Delta$ is an algebra morphism (equivalently, $m$ is a coalgebra morphism). 
This compatibility is usually expressed as a commutative diagram (see Figure~\ref{fig:bialgebra-diagrams}).
\begin{figure}[ht]
    \centering

\begin{equation*}
\xymatrix{
H \otimes H \otimes H \otimes H \ar[rr]^{\id \otimes \tau \otimes \id} &&  H \otimes H \otimes H \otimes H  \ar[d]^{m \otimes m}\\
H \otimes H \otimes \ar[u]^{\Delta \otimes \Delta} \ar[r]^{m} & H \ar[r]^{\Delta} & H \otimes H     \\
}
\end{equation*}
    \caption{Commutative diagram}
    \label{fig:bialgebra-diagrams}
\end{figure}

The compatibility between the algebra and coalgebra structures of a bialgebra is expressed by the following identity:
\begin{equation}
\Delta \circ m = (m \otimes m) \circ (\id \otimes \tau \otimes \id) \circ (\Delta \otimes \Delta),
\end{equation}
where $\tau : B \otimes A \to A \otimes B$ denotes the map defined by $\tau(b \otimes a) = a \otimes b$ for all $a \in A$ and $b \in B$, with $A$ and $B$ graded modules.
This relation expresses that the comultiplication $\Delta$ preserves the algebraic structure.

\paragraph{Compatibility of the unit and counit.}
The counit $\varepsilon$ is required to be an algebra morphism:
\begin{equation}
\varepsilon \circ m = \varepsilon \otimes \varepsilon, 
\qquad 
\varepsilon \circ u = \id,
\end{equation}
and the unit $u$ must be a coalgebra morphism:
\begin{equation}
\Delta \circ u = u \otimes u,
\qquad 
\varepsilon \circ u = \id.
\end{equation}

\medskip

A \emph{Hopf algebra} is a bialgebra 
$(H, m, u, \Delta, \varepsilon)$ 
over a field $K$ with a linear map $S : H \longrightarrow H$, called the \emph{antipode}, satisfying the commutativity of the diagram in Figure~\ref{fig:hopf-diagram}. 
Equivalently, $S$ satisfies
\begin{equation}
m \circ (S \otimes \mathrm{id}) \circ \Delta 
= 
u \circ \varepsilon 
= 
m \circ (\mathrm{id} \otimes S) \circ \Delta.
\end{equation}
This condition ensures that the antipode acts as a generalized inverse with respect to the convolution product induced by the bialgebra structure.

\begin{figure}[ht]
    \centering

\begin{equation}
\label{antipode-diagram}
\xymatrix{
&H \otimes H \ar[rr]^{S \otimes \id_H}& &H \otimes H \ar[dr]^m& \\
H \ar[ur]^\Delta \ar[rr]^{\epsilon} \ar[dr]^\Delta& & K \ar[rr]^{u} & & H\\
&A \otimes H \ar[rr]^{\id_H \otimes S}& &H \otimes H \ar[ur]^m& \\
}
\end{equation}
    \caption{Commutative diagram characterizing the antipode of a Hopf algebra.}
    \label{fig:hopf-diagram}
\end{figure}

\paragraph{Duality of Hopf Algebras.}
The notion of duality plays a central role in the theory of Hopf algebras, reflecting the deep symmetry between algebraic and coalgebraic structures. 
To any  Hopf algebra $H = (H, m, u, \Delta, \varepsilon, S)$, one  associates the \emph{dual Hopf algebra} $H^{*}$, constructed on the Sweedler dual\footnote{The space of endomorphisms vanishing on a finite codimension ideal.} of the space $H$, in which the roles of multiplication and comultiplication are interchanged.

If $H=\bigoplus_{k}H_k$ is a locally finite-dimensional graded Hopf algebra (i.e. graduations is compatible with the product and the coproduct and 
$\dim H_k<\infty$ for each $k$), its linear graded dual space 
\begin{equation}H^{*} = \bigoplus_{k}H^*_k\end{equation}
coincides with the Sweedler dual and naturally inherits a Hopf algebra structure defined as follows:
\begin{equation}
\begin{array}{rcll}
(f \cdot g)(x) &=& (f \otimes g)\big(\Delta(x)\big), \qquad &\forall x \in H, \\
\Delta_{H^{*}}(f)(x \otimes y) &=& f\big(m(x \otimes y)\big), \qquad &\forall x, y \in H, \end{array}\end{equation}
\begin{equation}
u_{H^{*}} = \varepsilon, \qquad \varepsilon_{H^{*}}(f) = f(u_H),\mbox{ and } 
S_{H^{*}} = (S_H)^{*}.
\end{equation}
Hence, in this case, the dual $H^{*}$ is still a Hopf algebra, called the \emph{dual Hopf algebra} of $H$. The duality of Hopf algebras interchanges multiplication and comultiplication, turning algebra morphisms into coalgebra morphisms and vice versa. 

\subsection{The Specific Type of Hopf Algebra Considered}
A bialgebra $H=(V,*,\Delta,\eta,\varepsilon,S)$ is said \emph{graded} if the underlying space splits as $V=\bigoplus_{n\in\mathbb N}V_n$ the graded components $V_n$ are compatible with the bialgebra structure in the sense that for any $(u,v)\in V_n\times V_m$ we have $u*v\in V_{n+m}$, and for any $u\in V_n$, $\Delta(u)\in\bigoplus_{i+j=n}V_i\otimes V_j$.  In what follows, we require that $V_0=\mathbb C$, i.e. $H$ is a connected  bialgebra. In that case, there existe a unique antipode which endows $H$ with a structure of Hopf algebra (see e.g. \cite{Hopf2020} Proposition 1.4.16). The antipode is obtained by applying the recursive formula $S(u)=u$ if $u\in V_0$ and \begin{equation}S(u)=-\sum_{u^{(1)}\neq u}S(u^{(1)})u^{(2)}\end{equation} for $\Delta(u)=\sum u^{(1)}\otimes u^{(2)}$, in the Sweedler notation. It is the fact that the algebra is graded that allows the existence of an antipode, and the fact that the algebra is \emph{connected} that guarantees the unicity of the construction. To summarize, in the cases that interest us, we will not need to describe the entire Hopf algebra structure but only its structure of bialgebra.\\
In the cases we consider, we require that the dimensions of the graded spaces $V_n$ be finite. In that case, we say that $H$ is a \emph{finite-dimensional graded} Hopf algebra.
As a consequence, its graded dual algebra can be realized on the same underlying space. In fact, because the spaces associated with each grade are finite, their dual spaces are isomorphic. Therefore, they can be identified, and the product and coproduct of the dual $H^\star$ can be expressed in terms of those of the original Hopf algebra $H$.\\
Our goal is to recover certain combinatorial Hopf algebras, as well as their duals, from the Hopf algebra of diagrams. The notion of combinatorial Hopf algebras is a heuristic notion which means that the considered bialgebras are based on combinatorial objects. Out of the several approaches to a formal definition of this concept, we retain the formulation put forward by Loday and Ronco \cite{LR} which defined a combinatorial Hopf algebra as a Hopf algebra which
is free (or cofree) and equipped with a given isomorphism to the free algebra
over the indecomposables (resp. the cofree coalgebra over the primitives). For our purpose, we require the additional property that the Hopf algebra is subject to the Cartier--Milnor--Moore--Quilien theorem \cite{MM}.\\
To summarize, we will consider connected cocommutative Hopf algebras of finite type (and thus subject to the Cartier--Milnor--Moore theorem), whose underlying algebras are free, i.e. isomorphic to a (noncommutative) polynomial algebra \( \mathbb{C}\langle \Sigma \rangle \), where \( \Sigma = \bigcup_{n \geq 1} \Sigma_n \) is a graded alphabet with finite graded components \( \Sigma_n \).

\subsection{Examples of combinatorial Hopf algebras}
We present examples of Hopf algebras illustrating the prototype introduced above.
\subsubsection{Non commutative polynomials\label{subsub-ncpoly}}
First consider the bialgebra $\mathbb C\langle \Sigma\rangle$ of noncommutative polynomials that is the space generated  the elements of the free monoid $\Sigma^*$ endowed with the catenation product and the coproduct $\Delta$ defined by $\Delta(x)=x\otimes 1+1\otimes x$ for each $x\in\Sigma$. The dual bialgebra can be realized on the same space by introducing the shuffle product defined recursively by
\begin{equation}\label{eq-shufflerec}1\shuffle u=u\shuffle 1=u\mbox{ and }xu\shuffle yv=x\cdot(u\shuffle yv)+y(xu\shuffle v)\end{equation}
for any $u,v\in \Sigma^*$ and $x,y\in\Sigma$, and the  deconcatenation coproduct defined by 
\begin{equation}\Delta(u)=\sum_{u=u^{(1)}\cdot u^{(2)}}u^{(1)}\otimes u^{(2)}.\end{equation} This algebra carries a natural grading given by the word length and the pairing can be seen as the unique inner product satisfying $(u,v)=1$ if $u=v$ and $0$ otherwise, for any $u,v\in\Sigma^*$.
\subsubsection{Word symmetric Hopf algebra\label{subsub-wsym}}
The algebra of symmetric functions in noncommuting variables was first defined in 1936 \cite{Wolf}.
It was later revisited and further developed in the 2000s within the framework of combinatorial Hopf algebras (see, e.g., \cite{RS}).
Following \cite{HNT2008}, we denote this algebra by $\WSym$.
Its construction and fundamental properties are recalled below.\\
This Hopf algebra is constructed on the space generated by  elements $\Phi^\pi$, where $\pi$ is a set partition. The product is the shifted concatenation defined as 
\begin{equation}
\Phi^\pi \Phi^{\pi'}= \Phi^{\pi\cup \pi'[n]}
\end{equation}
where $\pi$ is a set partition of $\{1,\ldots,n\}$ and $\pi'[n]$ denotes the partition obtained by shifting each element of $\pi'$ by $n$ (i.e. $\pi'[n]=\{\{i_1+n,\ldots, i_k+n\}\mid \{i_1,\ldots, i_k\}\in \pi'\}$).
Hence, the algebra $(\WSym,\cdot)$ is isomorphic to the algebra of noncommutative polynomials $\mathbb C\langle \Sigma_P\rangle$, where $\Sigma_P$ is the set of letters $a_\pi$ indexed by indivisible set partitions, that is, partitions $\pi$ such that  for any $n$, $\pi=\pi_1\cup \pi_2[n]$ implies $\pi_1=\emptyset$ or $\pi_2=\emptyset$). The linear isomorphism $\iota$ is defined inductively by sending each $\Phi^\pi$ to $a_\pi$ when $\pi$ is indivisible and by $\iota(\Phi^{\pi\cup\pi'[n]})=\iota(\Phi^{\pi})\iota(\Phi^{\pi'})$. The definition of $\iota$ is straightforwardly compatible with the structure of algebra.
For instance,
\begin{equation}\iota(\Phi^{\{\{1\},\{2,4\},\{3,5\},\{6,7\}\}})=a_{\{\{1\}\}}a_{\{\{1,3\},\{2,4\}\}}a_{\{\{1,2\}\}},\end{equation} because \begin{equation}\Phi^{\{\{1\},\{2,4\},\{3,5\},\{6,7\}\}}=\Phi^{\{\{1\}\}}\Phi^{\{\{1,3\},\{2,4\}\}}\Phi^{\{\{1,2\}\}}.\end{equation}
To be more precise, for each word  $w\in\Sigma_P^*$, there exists a partition $\pi$ such that $w=\iota(\Phi^\pi)$, indeed $a_{\pi_1}\cdots a_{\pi_k}=\iota(\Phi^{\pi_1}\cdots\Phi^{\pi_k})$. Hence, the space $\mathbb C\langle\Sigma_P\rangle$ is spanned by the words $w_\pi:=\iota(\Phi^\pi)$, where $\pi$ is a set partition.\\ 
The coproduct $\Delta$ of $\WSym$ is defined by
\begin{equation}\label{eq-DeltaW}\Delta(\Phi^{\pi})
    = \sum_{I\sqcup J=\{1,\dots,n\}} \Phi^{\std(\pi|_I)} \otimes \Phi^{\std(\pi|_J)},
  \end{equation}
  where \(\pi|_I = \{\pi_{i_1}, \dots, \pi_{i_k}\}\) if \(I = \{i_1, \dots, i_k\}\), and \(\pi = \{\pi_1, \dots, \pi_n\}\); the notation \(\operatorname{std}\) denotes the standardization, which maps a partition of any finite set of integers \(E\) to a partition of \(\{1, \dots, \#E\}\) by applying the unique increasing bijection \(\varphi_E : E \to \{1, \dots, \#E\}\) to each block, elementwise; the notation $\sqcup$ is the disjoint union, i.e. $I\sqcup J=\{1,\dots,n\}$ means $I\cup J=\{1,\dots,n\}$ and $I\cap J=\emptyset$.  

By remarking that $\Delta(\Phi^{\pi\cup\pi'[n]})=\Delta(\Phi^\pi)\Delta(\Phi^{\pi'})$, the coproduct is transposed to $\mathbb C\langle\Sigma_P\rangle$ by setting
{\small
\begin{equation}\label{eq-Cop}
\begin{array}{rcl}\Delta(a_\pi)&=&\displaystyle
\sum_{I\sqcup J=\{1,\dots,n\}} w_{\std(\pi|_I)} \otimes w_{\std(\pi|_J)}\\&=&\displaystyle\sum_{\pi_1,\pi_2}\#\{(\pi'_1,\pi'_2)\mid \pi=\pi'_1\sqcup\pi'_2, \std(\pi'_1)=\pi_1\mbox{ and }\std(\pi'_2)=\pi_2\}w_{\pi_1}\otimes w_{\pi_2}.\end{array}
\end{equation}
For instance, we have
\begin{equation}\begin{array}{rcl}
\Delta(a_{\{\{1,3\},\{2,5\},\{4,6\}\}})&=&
a_{\{\{1,3\},\{2,5\},\{4,6\}\}}\otimes 1+ a_{\{\{1,2\}\}}\otimes(2a_{\{\{1,3\},\{2,4\}\}}+a^2_{\{\{1,2\}\}})\\
&&+(2a_{\{\{1,3\},\{2,4\}\}}+a^2_{\{\{1,2\}\}})\otimes a_{\{\{1,2\}\}} + 1\otimes a_{\{\{1,3\},\{2,5\},\{4,6\}\}}.
\end{array}
\end{equation}
}
The graded dual $\PiQSym$ of $\WSym$ is formally generated by the elements that form the dual basis of $\Phi^\pi$. Nevertheless, this algebra can be realized on the same space as $\WSym$ by defining its  product  by
\begin{equation}\label{prodPiQSym}
\Phi^{\pi_1}\Cup\Phi^{\pi_2}=\sum_{\pi}\#\{(\pi'_1,\pi'_2)\mid \pi=\pi'_1\sqcup\pi'_2, \pi_1=\std(\pi'_1),\pi_2=\std(\pi'_2)\}\Phi^\pi,
\end{equation}
and its coproduct is given by
\begin{equation}\label{eq-bulletw}
\Delta_\bullet(\Phi^\pi)=\sum_{\pi=\pi_1\uplus\pi_2}\Phi^{\std(\pi_1)}\otimes\Phi^{\std(\pi_2)}.
\end{equation}
It is easy to check that
\begin{equation}\langle \Phi^{\pi'}\Cup\Phi^{\pi''}\mid \Phi^{\pi}\rangle=\langle\Phi^{\pi'}\otimes\Phi^{\pi''}\mid\Delta(\Phi^\pi)\rangle\end{equation} and
\begin{equation}\langle \Phi^{\pi'}\Phi^{\pi''}\mid \Phi^\pi\rangle = \langle \Phi^{\pi'}\otimes\Phi^{\pi''}\mid\Delta_\bullet(\Phi^{\pi})\rangle,\end{equation} where $\langle\cdot\mid\cdot\rangle$ denotes the scalar product for which the basis of the $\Phi^\pi$ is orthonormal.

\subsubsection{Bi-words symmetric functions\label{subsub-BWsym}}
The Hopf algebra $\BWSym$ \cite{BCLM} is a combinatorial Hopf algebra whose basis
elements are indexed by set partitions into lists $\Pi=\{[i^1_1,\dots,i^1_{\ell_1}],\dots,[i^k_1,\dots,i^k_{\ell_k}]\}$ satisfying $k\geq 0$, $\ell_j\leq 1$ for each $1\leq j\leq k$, the integers $i^1_1,\dots,i^1_{\ell_1},\dots,i^k_1,\dots,i^k_{\ell_k}$ are distinct, and $\llbracket 1,n\rrbracket=\{i^j_t\mid 1\leq j\leq k, 1\leq t\leq \ell_j\}$ for a certain $n\in\N$; we let $\Pi\vDash n$ denote this property. As for the set partitions we define $\Pi\uplus\Pi'=\Pi\cup\{[i_1+n,\dots,i_k+n]:[i_1,\dots,i_k]\in\Pi'\}$ for $\Pi\vDash n$ and we will say that $\Pi$ is \emph{indivisible} if $\Pi=\Pi'\uplus\Pi''$ for some set partitions into lists $\Pi', \Pi''$ implies either $\Pi'=\Pi$ or $\Pi''=\Pi$.\\
The Hopf algebra $\BWSym$ is freely generated by the set $\{\Phi^\Pi\mid\Pi\vDash n, n\geq 0\}$. The product structure of this algebra is given as follows:
\begin{equation}\label{bwsymproduct}
\Phi^\Pi\Phi^{\Pi'}=\Phi^{\Pi\uplus\Pi'}
\end{equation}
and the coproduct
\begin{equation}\label{bwsymcoproduct}
\Delta(\Phi^\Pi)=\sum_{I\sqcup J=\{1,\dots,n\}}\Phi^{\std(\Pi|_I)}\otimes \Phi^{\std(\Pi|_J)},
\end{equation}
with the  usual notation \begin{equation}\std(\hat\Pi)=\Big\{[\varphi_{F}(i_1),\dots,\varphi_F(i_k)]\mid [i_1,\dots,i_k]\in \hat\Pi\Big\}\end{equation} where $F=\displaystyle\bigcup_{[i_1,\ldots,i_k]\in \hat\Pi}\{i_1,\ldots, i_k\}$.\\
These operations endow $\BWSym$ with a structure of a graded, connected, cocommutative Hopf algebra.\\
The graded dual $\BPiQSym$ of $\BWSym$ can be implemented in the same space by setting 
\begin{equation}\label{eq-shufflebw}
\Phi^{\Pi_1}\Cup\Phi^{\Pi_2}=\sum_{{\Pi'}}\#\Big\{(\Pi'_1,\Pi'_2)\mid  \Pi'_1\sqcup\Pi'_2=\Pi', \std(\Pi'_1)=\Pi_1, \std(\Pi'_2)=\Pi_2 \Big\} \Phi^{\Pi'},
\end{equation}
and 
\begin{equation}\label{eq-cocatbw}
\Delta_{\bullet}(\Phi^\Pi)=\sum_{\Pi=\Pi_1\uplus\Pi_2}\Phi^{\std(\Pi_1)\otimes\std(\Pi_2)}.
\end{equation}


Note that the algebra $\WSym$ can be realized as the subalgebra $\WSym^B$ of $\BWSym$ generated by the elements $\Phi^\Pi$, where $\Pi$ is a partition into increasing lists. The morphism $\aleph_w$ maps
$
\Phi^{\{\{i^1_1,\ldots,i^1_{\ell_1}\},\ldots,\{i^k_1,\ldots,i^k_{\ell_k}\}\}}
$
to
$
\Phi^{\{[i^1_1,\ldots,i^1_{\ell_1}],\ldots,[i^k_1,\ldots,i^k_{\ell_k}]\}},
$
where each block is strictly ordered, i.e.
$
i_1^1 < \cdots < i_{\ell_1}^1,\,
i_1^2 < \cdots < i_{\ell_2}^2,\,
\ldots,\,
i_1^k < \cdots < i_{\ell_k}^k,$
for example, the maps $\aleph_w$ sends $\Phi^{\{\{5, 2\}, \{1,3\}, \{4\}\}}$ to $\Phi^{\{[1,3], [2, 5], [4]\}}.$
Since the coproduct $\Delta$ preserves the order in the block, the algebra $\WSym^B$ is, in fact, a sub-Hopf algebra of $\BWSym$. Furthermore, comparing Equality \eqref{eq-DeltaW} and \eqref{bwsymcoproduct} we show that $\aleph_w$ is an isomorphism of Hopf algebras. By remarking that the space $\WSym^B$ is stable for the product $\Cup$ and the coproduct $\Delta_\bullet$ and comparing equalities \eqref{prodPiQSym} and \eqref{eq-bulletw} to equalities \eqref{eq-shufflebw} and \eqref{eq-cocatbw}, we show that the Hopf algebra $\PiQSym$ and $(\WSym^B,\Cup,\Delta_\bullet)$ are isomorphic.
\subsection{Cocommutative Hopf algebras of noncommutative polynomials\label{App_HopfPol}}
We now turn to the study of a more general Hopf algebraic framework that encompasses the examples introduced in the previous subsection.\\
We recall that any bigebra $H=(V,\times,\Delta)$, where $V=\bigoplus_{n\geq 0}V_n$ is graded in finite dimension, has a structure of Hopf algebra since the antipode can be deduced by construction. The dual Hopf algebra $H^\star=(V^\star,\times_{\Delta},\Delta_{\times})$ is constructed on the graded dual $V^\star=\bigoplus_{n\geq 0}V_n^\star$ of $V$ with the product $\times_\Delta$ defined by $(u\times_\Delta v,w)=(u\otimes v,\Delta w)$ and the coproduct $\Delta_\times$ defined by $(\Delta_\times w,u\otimes v)=(w,u\times v)$. The point is that, since $V$ is graded with in finite dimension, it is isomorphic to its graded dual, so one can define the dual of the Hopf algebra directly on $V$. More precisely, this means that there exists a Hopf algebra $\widetilde {H^\star}=(V,\widetilde \times,\widetilde \Delta)$ that is isomorphic to $H^*$.

A Hopf subgebra $H'$ of $H$ is both a subalgebra and a subcoalgebra of $H$. Notice that the general definition requires the antipode to map $H'$ to itself.  In our case, the Hopf algebras being graded in finite dimension, the antipode of $H'$ is deduced from its structures of algebra and coalgebra. 
Therefore, the property of stability under the antipode is automatically satisfied. By duality,
we obtain that ${H'}^\star$
is isomorphic to a quotient of $\widetilde{H^\star}$. 
We now focus on a special case where ${H'}^\star$ is isomorphic to a Hopf subalgebra of $\widetilde{H^\star}$.

We consider a connected, cocommutative Hopf algebra of finite type (and thus subject to the Cartier--Milnor--Moore theorem \cite{MM}), whose underlying algebra is the polynomial algebra \( \mathbb{C}\langle \Sigma \rangle \), where \( \Sigma = \bigcup_{n \geq 1} \Sigma_n \) is a graded alphabet with finite graded components \( \Sigma_n \).
Let us denote by $\mathcal H=(\mathbb C\langle \Sigma\rangle,\cdot,\Delta)$ such a Hopf algebra. Since the space $\mathbb C\langle\Sigma\rangle$ is graded in finite dimension, it is isomorphic to its graded dual space (i.e. the direct sum of the spaces of its graded component). Hence, we can endow $\mathbb C\langle \Sigma\rangle$ with a bigebra structure $(\mathbb C\langle \Sigma\rangle,\Cup,\Delta_\cdot)$ that is isomorphic to the dual Hopf algebra $\mathcal H^\star$ of $\mathcal H$. Considering the pairing defined on each pair of words $u_1, u_2\in\Sigma^*$ by $(u_1,u_2)=1$ if $u_1=u_2$ and  $0$ otherwise, the product $\Cup$ and the coproduct $\Delta_\cdot$ are easily constructed. Indeed, $\Delta_\cdot$ is the deconcatenation coproduct defined by
\begin{equation}
(u_1\otimes u_2,\Delta_.(w))=(u_1u_2,w)=\left\{\begin{array}{cl}
1&\mbox{if } w=u_1u_2\\
0&\mbox{if } 0.
\end{array}\right.
\end{equation}
and $\Cup$ is completely characterized by $(u_1\Cup u_2,w)=(u_1\otimes u_2, \Delta(w))$. More precisely,
\begin{lemma}\label{lemCup}
The product $u_1\Cup u_2$ of two words $u_1,u_2\in\Sigma^*$ is inductively defined by
\begin{equation}\label{eq-CuplemCup}
u_1\Cup u_2=\left\{\begin{array}{cl}
1&\mbox{if } u_1=u_2=1\\
\displaystyle\sum_{\genfrac{}{}{0pt}{}{u_1=p_1v_1, u_2=p_2v_2}{ p_1\neq 1\ \mathrm{or}\ p_2\neq 1}}\sum_{a\in \Sigma}(p_1\otimes p_2,\Delta (a))a\cdot (v_1\Cup v_2)&\mbox{otherwise}.
\end{array}\right.
\end{equation}
\end{lemma}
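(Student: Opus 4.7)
The plan is an induction on the total length $n = |u_1| + |u_2|$, using the characterizing identity $(u_1 \Cup u_2, w) = (u_1 \otimes u_2, \Delta(w))$ to verify that the right-hand side of \eqref{eq-CuplemCup} realizes the correct pairing with every word $w \in \Sigma^*$.

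For the base case $u_1 = u_2 = 1$, gradedness forces $(1 \otimes 1, \Delta(w)) = 0$ whenever $\deg w \geq 1$, since $\Delta(w)$ lies in $\bigoplus_{i+j = \deg w} V_i \otimes V_j$ which omits the component $V_0 \otimes V_0$; for $w = 1$ we have $\Delta(1) = 1 \otimes 1$, giving the value $1$. Hence $1 \Cup 1 = 1$, matching the declared value.

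For the inductive step, assume $(u_1, u_2) \neq (1, 1)$ and pair both sides of \eqref{eq-CuplemCup} with an arbitrary $w \in \Sigma^*$. If $w = 1$ then both sides vanish: the left by gradedness, the right because every summand on the right-hand side has positive length. Otherwise, write $w = a' w'$ with $a' \in \Sigma$. On the right-hand side, $(a \cdot (v_1 \Cup v_2), a' w') = \delta_{a, a'}(v_1 \Cup v_2, w')$ collapses the sum over $a \in \Sigma$; applying the inductive hypothesis to each $v_1 \Cup v_2$ (legal because $(p_1, p_2) \neq (1, 1)$ forces $|v_1| + |v_2| < n$) yields
\begin{equation*}
(u_1 \Cup u_2, w) \ =\ \sum_{\substack{u_1 = p_1 v_1,\ u_2 = p_2 v_2 \\ (p_1, p_2) \neq (1, 1)}} (p_1 \otimes p_2, \Delta(a'))\, (v_1 \otimes v_2, \Delta(w')).
\end{equation*}
On the other side, since $\Delta$ is an algebra morphism, $\Delta(w) = \Delta(a')\Delta(w')$; expanding this product in $\mathbb{C}\langle\Sigma\rangle^{\otimes 2}$ and extracting the coefficient of $u_1 \otimes u_2$ gives exactly the same sum but ranging over all decompositions without the restriction $(p_1, p_2) \neq (1, 1)$.

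The one delicate point, and the main (mild) obstacle, is handling the excluded term $(p_1, p_2) = (1, 1)$: it contributes $(1 \otimes 1, \Delta(a')) \cdot (u_1 \otimes u_2, \Delta(w'))$, which vanishes because $a' \in \Sigma$ has degree at least $1$ and thus $\Delta(a')$ has no component in $V_0 \otimes V_0$. So the two expressions agree, closing the induction. This same observation explains why the restriction in \eqref{eq-CuplemCup} is harmless: it only removes a term that would otherwise lead to a circular call $v_1 \Cup v_2 = u_1 \Cup u_2$, but whose coefficient is zero anyway.
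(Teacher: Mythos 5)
Your proof is correct and follows essentially the same route as the paper's: both arguments reduce to pairing with a word $w=aw'$, using the multiplicativity $\Delta(w)=\Delta(a)\Delta(w')$, and observing that the excluded decomposition $(p_1,p_2)=(1,1)$ contributes nothing because $\Delta(a)$ has no component in $V_0\otimes V_0$ for $a\in\Sigma$. Your explicit organization as an induction on $|u_1|+|u_2|$, with the remark that the restriction $(p_1,p_2)\neq(1,1)$ is exactly what makes the recursion well-founded, is a welcome clarification but not a different method.
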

\begin{proof}
The base case of the induction, namely \(1 \Cup 1 = 1\), holds because \((1 \otimes 1, w) = 1\) if \(w = 1\), and vanishes otherwise, as \(\mathcal{H}\) is a connected Hopf algebra.

To address the general case, we assume that \( u_1 \neq 1 \) or \( u_2 \neq 1 \).

In that case, $(u_1\Cup u_2,w)=(u_1\otimes u_2,\Delta(w))\neq 0$ implies   $w\neq 1$. Let $w=a\cdot w'$ be a non empty word, with $a\in\Sigma$. We have
\begin{equation}\begin{array}{rcl}
(u_1\Cup u_2,w)&=&(u_1\otimes u_2,\Delta(w))\\
&=&(u_1\otimes u_2,\Delta(a)\Delta(w))\\
&=&\displaystyle\Bigg(u_1\otimes u_2,
\bigg(\sum_{\genfrac{}{}{0pt}{}{p_1,p_2\in\Sigma^*}{ p_1\neq 1\ \mathrm{or}\ p_2\neq 1}}(p_1\otimes p_2,\Delta(a))p_1\otimes p_2\bigg)\cdot \Delta(w')
\Bigg)\\
&=&\displaystyle \sum_{\genfrac{}{}{0pt}{}{u_1=p_1v_1, u_2=p_2v_2}{ p_1\neq 1\ \mathrm{or}\p_2\neq 1} }(p_1\otimes p_2,\Delta(a))(v_1\otimes v_2,\Delta(w')).
\end{array}
\end{equation}
So we obtain 
\begin{equation}
\begin{array}{rcl}
u_1\Cup u_2&=& \displaystyle\sum_{w\in\Sigma^*\setminus\{1\}}(u_1\otimes u_2,w)\\
&=&\displaystyle
\sum_{\genfrac{}{}{0pt}{}{a\in \Sigma}{ w'\in\Sigma^*}}(u_1\otimes u_2,aw')aw'\\
&=&\displaystyle \sum_{\genfrac{}{}{0pt}{}{u_1=p_1v_1, u_2=p_2v_2}{ p_1\neq 1\ \mathrm{or}\ p_2\neq 1}}\sum_{a\in\Sigma}(p_1\otimes p_2,\Delta(a))a\cdot \sum_{w'\in\Sigma^*}(v_1\otimes v_2,\Delta(w'))\\
&=&
\displaystyle\sum_{\genfrac{}{}{0pt}{}{u_1=p_1v_1, u_2=p_2v_2}{ p_1\neq 1\ \mathrm{or}\ p_2\neq 1}}\sum_{a\in \Sigma}(p_1\otimes p_2,\Delta (a))a\cdot (v_1\Cup v_2),
\end{array}
\end{equation}
as expected.
\end{proof}
\begin{example}\rmfamily
We consider the shuffle algebra on non commutative polynomials as described in Section
\ref{subsub-ncpoly}. Equality \eqref{eq-shufflerec} falls within the scope of Lemma~\ref{lemCup}. Indeed, we remark that $\Delta(x)=x\otimes1 + 1\otimes x$ (i.e. every letter is primitive), hence $(p_1\otimes p_2,\Delta(a))\neq 0$ if and only if $\{p_1,p_2\}=\{1,a\}$. From this property, the sum of Equality \eqref{eq-CuplemCup} is straightforwardly equivalent to  \eqref{eq-shufflerec}.
\end{example}
\begin{example}\rmfamily
Let us investigate the case of the combinatorial Hopf algebra $\WSym$ 
described in section \ref{subsub-wsym}.
Obviously the bigebra $\mathcal H=(\mathbb C\langle \Sigma_P\rangle,\cdot,\Delta)$ is isomorphic to $\WSym$ but some letters $a_\pi$ are not primitive.\\
If we consider the product $\Cup$ defined in Lemma \ref{lemCup} and the coproduct of deconcatenation $\Delta_\cdot$, the algebra $\mathcal H^\star$
is isomorphic to $(\mathbb C\langle \Sigma_P\rangle,\Cup,\Delta_\cdot)$ and is also isomorphic to the Hopf algebra $\PiQSym$, dual of $\WSym$. The explicit isomorphism sends each $w_\pi$ to $\Psi_\pi$. Indeed, by duality one has
\begin{equation}\label{prodSigmaPstar}
w_{\pi_1}\Cup w_{\pi_2}=\sum_{\pi}\#\{(\pi'_1,\pi'_2)\mid \pi=\pi'_1\sqcup\pi'_2, \std(\pi'_1)=\pi_1\mbox{ and }\std(\pi'_2)=\pi_2\}w_\pi,
\end{equation}
and the comparison with equality \eqref{prodPiQSym} shows $w_\pi\rightarrow\Psi_\pi$ is an isomorphism of algebra. This isomorphism is obviously compatible with the structure of cogebra.
\end{example}

Let $\Sigma' \subset \Sigma$ be a subalphabet such that $\mathbb{C}\langle \Sigma' \rangle$ is compatible with the coproduct~$\Delta$, 
in the sense that the restriction of~$\Delta$ to $\Sigma'$ maps into 
$\mathbb{C}\langle \Sigma' \rangle \otimes \mathbb{C}\langle \Sigma' \rangle$.
By duality, the space $\mathbb C\langle\Sigma'\rangle$ is stable for the product $\Cup$.
As a consequence, a direct application of Lemma \ref{lemCup} shows  the following result:
\begin{lemma}\label{lemsubdual}
The bialgebra $(\mathbb C\langle\Sigma'\rangle, \Cup,\Delta_\cdot)$ is a Hopf algebra which is isomorphic to the dual of the bialgebra $(\mathbb C\langle\Sigma'\rangle,\cdot,\Delta)$ and for any $u_1, u_2\in {\Sigma'}^*$ such that $u_1\neq 1$ or $u_2\neq 1$, we have :
\begin{equation}\label{defcupprod}
u_1\Cup u_2=\sum_{\genfrac{}{}{0pt}{}{u_1=p_1v_1, u_2=p_2v_2}{ p_1\neq 1\ \mathrm{or}\ p_2\neq 1}}\sum_{a\in \Sigma'}(p_1\otimes p_2,\Delta (a))a\cdot (v_1\Cup v_2).
\end{equation}
\end{lemma}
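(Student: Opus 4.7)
The plan is to deduce Lemma~\ref{lemsubdual} directly from Lemma~\ref{lemCup} by restricting every construction to the subspace $\mathbb C\langle\Sigma'\rangle$, using the compatibility hypothesis to guarantee that nothing escapes this subspace.

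First I would verify that $(\mathbb C\langle\Sigma'\rangle,\cdot,\Delta)$ is a graded connected sub-bialgebra of $\mathcal H$. Closure under concatenation is immediate. For closure under $\Delta$, I use that $\Delta$ is an algebra morphism: for $w=a_1\cdots a_k\in\Sigma'^*$ one has $\Delta(w)=\Delta(a_1)\cdots\Delta(a_k)$, and by hypothesis each factor lies in $\mathbb C\langle\Sigma'\rangle\otimes\mathbb C\langle\Sigma'\rangle$, hence so does the product. Since this sub-bialgebra inherits the grading (in finite dimension) and is connected, the general argument recalled at the beginning of Section~\ref{App_HopfPol} produces a unique antipode, turning it into a Hopf algebra. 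Its graded dual can then be realized on the same space $\mathbb C\langle\Sigma'\rangle$ via the restriction of the orthonormal pairing $(u_1,u_2)=\delta_{u_1,u_2}$ to $\Sigma'^*$, with deconcatenation coproduct $\Delta_\cdot$ (dual to the concatenation of $\Sigma'^*$) and product $\Cup$ characterized by $(u_1\Cup u_2,w)=(u_1\otimes u_2,\Delta(w))$ for $w\in\Sigma'^*$. This yields the first two assertions of the lemma.

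For the recursive identity~\eqref{defcupprod}, I would transpose verbatim the induction of Lemma~\ref{lemCup}, with $\Sigma$ replaced by $\Sigma'$ throughout. The only step that is sensitive to the ambient alphabet is the expansion $\Delta(a\cdot w')=\Delta(a)\Delta(w')$ for a leading letter $a\in\Sigma'$ of $w$: the hypothesis gives $\Delta(a)=\sum_{p_1,p_2\in\Sigma'^*}(p_1\otimes p_2,\Delta(a))\,p_1\otimes p_2$, so the inner sum over $(p_1,p_2)$ can be taken over $\Sigma'^*\times\Sigma'^*$ and the outer sum over the new letter can be restricted to $a\in\Sigma'$. Matching the coefficient of each $w=a\cdot w'\in\Sigma'^*$ in both sides of~\eqref{defcupprod} then reduces, by induction on $|u_1|+|u_2|$, to the defining identity $(v_1\Cup v_2,w')=(v_1\otimes v_2,\Delta(w'))$.

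No substantive obstacle is expected: the compatibility hypothesis is tailored precisely to prevent any leakage into words involving letters of $\Sigma\setminus\Sigma'$, so the proof of Lemma~\ref{lemCup} transposes without modification. The only conceptual point worth stressing is that the dual product on $\mathbb C\langle\Sigma'\rangle^\star\cong\mathbb C\langle\Sigma'\rangle$ is intrinsically defined by the sub-bialgebra structure, so we never have to project from $\mathcal H^\star$ or worry about contributions from letters outside $\Sigma'$.
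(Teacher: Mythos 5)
Your proposal is correct and follows essentially the same route as the paper, which simply observes that the compatibility hypothesis makes $\mathbb C\langle\Sigma'\rangle$ a sub-bialgebra stable under $\Cup$ and then invokes Lemma~\ref{lemCup} directly; you merely spell out the details (closure of $\Delta$ via multiplicativity, the restriction of the pairing, and the fact that the sums in the induction stay inside $\Sigma'^*$). No gap.
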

\begin{example}\rmfamily
Consider the subspace \begin{equation}\WSym_{\mathrm{even}}=\mathrm{span}\{\Phi^{\{\pi_1,\dots,\pi_k\}}\mid k\in\mathbb N, \forall i\in\{1,\dots,k\}, \#\pi_i\in2\mathbb N\}\end{equation} of $\WSym$.
This subspace is clearly stable by throught the product and the coproduct. So it is the Hopf subalgebra of $\WSym$ generated by the elements $\Phi^\pi$ where $\pi$
is an indivisible partition having only parts with even sizes.
Obviously, $\WSym_{\mathrm{even}}$ is isomorphic as a bialgebra to the subalgebra
 $\mathbb C\langle \Sigma'_P\rangle\subset \mathbb C\langle \Sigma\rangle$ with \begin{equation}\Sigma_P'=\{a_{\pi}\mid \pi\mbox{ is indivisible and have only parts with even sizes}\}.\end{equation} From Lemma \ref{lemsubdual}, the dual Hopf algebra of $(\mathbb C\langle \Sigma'_P\rangle,\cdot,\Delta)$ is isomorphic to the bialgebra $(\mathbb C\langle \Sigma'_P\rangle,\Cup, \Delta_\cdot)$. We deduce that $\WSym^\star_{\mathrm{even}}$ is isomorphic to the subalgebra $\PiQSym_{\mathrm{even}}$ of $\PiQSym$ generated by the elements $\Psi_\pi$ where $\pi$ is a partition having only parts with even size. For instance, compare
{\small \begin{equation}\begin{array}{l}
a_{\{\{1,2\}\}}a_{\{\{1,3\},\{2,4\}\}}\Cup a_{\{\{1,2\}\}} =
a_{\{\{1,2\}\}}a_{\{\{1,3\},\{2,4\}\}}a_{\{\{1,2\}\}}+2a_{\{\{1,2\}\}}^2a_{\{\{1,3\},\{2,4\}\}}\\+ 2a_{\{\{1,3\},\{2,4\}\}}^2+2a_{\{\{1,4\},\{2,3\}\}}a_{\{\{1,3\},\{2,4\}\}}+2a_{\{\{1,2\}\}}a_{\{\{1,3\},\{2,5\},\{4,6\}\}}\\+
3a_{\{\{1,2\}\}}a_{\{\{1,4\},\{2,5\},\{3,6\}\}}+
2a_{\{\{1,2\}\}}a_{\{\{1,4\},\{3,5\},\{2,6\}\}}
+a_{\{\{1,2\}\}}a_{\{\{2,4\},\{3,5\},\{1,6\}\}}\\
+a_{\{\{1,2\}\}}a_{\{\{1,3\},\{2,6\},\{4,5\}\}}+
2a_{\{\{1,2\}\}}a_{\{\{2,4\},\{3,6\},\{1,5\}\}}+
a_{\{\{1,2\}\}}a_{\{\{1,5\},\{2,6\},\{3,4\}\}} \\+
a_{\{\{1,2\}\}}a_{\{\{1,5\},\{4,6\},\{2,3\}\}}
\end{array}
\end{equation}
}
to
{
\begin{equation*}
\begin{array}{l}
\Psi_{\{\{1,2\},\{3,5\},\{4,6\}\}}\Psi_{\{\{1,2\}\}}=
\Psi_{\{\{1,2\},\{3,5\},\{4,6\},\{7,8\}\}}+2\Psi_{\{\{1,2\},\{3,4\},\{5,7\},\{6,8\}\}}+\\
+ 2\Psi_{\{\{1,3\},\{2,4\},\{5,7\},\{6,8\}\}}+2\Psi_{\{\{1,4\},\{2,3\},\{5,7\},\{6,8\}\}}+2\Psi_{\{\{1,2\},\{3,5\},\{4,7\},\{6,8\}\}}+\\
3\Psi_{\{\{1,2\}\},\{3,6\},\{4,7\},\{5,8\}\}}+
2\Psi_{\{\{1,2\},\{3,6\},\{5,7\},\{4,8\}\}}
+\Psi_{\{\{1,2\},\{4,6\},\{5,7\},\{3,8\}\}}+\\
\Psi_{\{\{1,2\},\{3,5\},\{4,8\},\{6,7\}\}}
+
2\Psi_{\{\{1,2\},\{4,6\},\{5,8\},\{3,7\}\}}+
\Psi_{\{\{1,2\},\{3,7\},\{4,8\},\{5,6\}\}}  +\\
\Psi_{\{\{1,2\},\{3,7\},\{6,8\},\{4,5\}\}}
\end{array}
\end{equation*}
}
\end{example}

\begin{proposition}\label{prop_subdual}
Let $\mathcal H=(V,\times,\Delta)$ be a connected Hopf algebra graded in finite dimension and denote by $\mathcal H^\star=(V^\star, \times_\Delta,\Delta_\times)$ its graded dual. We assume that $(V,\times)$ is free and generated by some $\Sigma\subset V$ (i.e., $V$ is isomorphic to $\mathbb C\langle \Sigma\rangle$). Let $\Sigma'\subset \Sigma$ such that $\Delta$ sends $\Sigma'$ to $V'\otimes V'$, where $V'$ is the underlying space of the subalgebra of $(V,\times)$ generated by $V$.\\
Denote by $\mathcal H'$ the (sub) Hopf algebra of $V$ generated by $\Sigma'$, then the dual of $\mathcal H'$ is isomorphic to the subalgebra of $\mathcal H'$ generated by the elements $a^\star$, satisfying $(a,a^*)=1$ and $(v,a^*)=0$ for $v\neq a$, for $a\in\Sigma'$. 
\end{proposition}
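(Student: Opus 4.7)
The strategy is to apply Lemma~\ref{lemsubdual} directly, using the orthonormal identification to translate between $a^\star$ and $a$.

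Since $V = \mathbb{C}\langle \Sigma\rangle$ is graded in finite dimension with basis $\Sigma^*$, the pairing $(u_1, u_2) = \delta_{u_1, u_2}$ realizes the graded dual $\mathcal H^\star$ on the same underlying space as $\widetilde{\mathcal H^\star} = (V, \Cup, \Delta_\cdot)$. Under this identification, the element $a^\star$ characterized by $(a, a^\star) = 1$ and $(v, a^\star) = 0$ for $v \neq a$ is precisely the letter $a \in V$; consequently every $a^\star$ with $a \in \Sigma'$ lies in the subspace $V' = \mathbb C\langle \Sigma'\rangle$, which is by construction the underlying space of $\mathcal H'$.

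Next I verify that $V'$ is a sub-Hopf algebra of $\widetilde{\mathcal H^\star}$. The hypothesis $\Delta(\Sigma') \subset V' \otimes V'$ makes $\mathcal H'$ a sub-bialgebra of $\mathcal H$; dually, the orthogonal projection $V \twoheadrightarrow V'$ (killing words not in ${\Sigma'}^*$) is a bialgebra morphism from $\widetilde{\mathcal H^\star}$ onto $(\mathcal H')^\star$. Stability of $V'$ under the deconcatenation $\Delta_\cdot$ is immediate, since splitting a word in ${\Sigma'}^*$ produces a pair of words in ${\Sigma'}^*$; stability under $\Cup$ is exactly the content of formula~\eqref{defcupprod} in Lemma~\ref{lemsubdual}, derived by induction on $|u_1|+|u_2|$ using $\Delta(\Sigma')\subset V'\otimes V'$. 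Being connected and graded in finite dimension, $(V', \Cup, \Delta_\cdot)$ is therefore a sub-Hopf algebra of $\widetilde{\mathcal H^\star}$, and Lemma~\ref{lemsubdual} identifies it with $(\mathcal H')^\star$; since it contains every $a^\star$ with $a \in \Sigma'$, the proposition follows.

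The delicate point to clarify is the meaning of ``subalgebra generated by the $a^\star$''. Taken in the plain sense of $\Cup$-algebraic generation, letters alone need not span $V'$: for instance, the shuffle subalgebra of $\mathbb C\langle a,b\rangle$ generated by two primitive letters is strictly smaller than the full shuffle algebra, being freely generated by Lyndon words. Here the generation must therefore be understood in the ambient Hopf-algebraic sense on $V'$ — namely the sub-Hopf algebra of $\widetilde{\mathcal H^\star}$ whose underlying space is $V' \subseteq V$, linearly spanned by the words in ${\Sigma'}^*$ — which is precisely the natural realization of $(\mathcal H')^\star$ inside $\widetilde{\mathcal H^\star}$ provided by Lemma~\ref{lemsubdual}.
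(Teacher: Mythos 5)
Your proof is correct and follows essentially the same route as the paper: realize $\mathcal H^\star$ on the same underlying space via the orthonormal pairing on words (Lemma~\ref{lemCup}), then invoke Lemma~\ref{lemsubdual} to identify $(\mathcal H')^\star$ with $(\mathbb C\langle\Sigma'\rangle,\Cup,\Delta_\cdot)$. Your closing remark that ``generated by the $a^\star$'' must be read as the sub-Hopf algebra spanned by ${\Sigma'}^*$ rather than as plain $\Cup$-algebraic generation is a legitimate clarification of an imprecision the paper's own proof glosses over, but it does not change the substance of the argument.
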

\begin{proof}
The hypothesis means that the Hopf algebra $\mathcal H$ is isomorphic to a Hopf algebra $\mathcal H_w=(\mathbb C\langle\Sigma\rangle,\cdot,\Delta)$ where $\cdot$ is the concatenation product. From Lemma \ref{lemCup}, the Hopf algebra $\mathcal H^*$ can be explicitly realized on the same space and so is isomorphic to the Hopf algebra $\mathcal H_w^*=(\mathbb C\langle X\rangle, \Cup,\Delta_\bullet)$, where $\Delta_\bullet$ is the coproduct of coconcatenation. Hence, by Lemma \ref{lemsubdual}, the Hopf algebra $\mathcal H'$ is isomorphic to $(\mathbb C\langle \Sigma'\rangle, \cdot,\Delta)$ and 
$\mathcal H'^\star$ is isomorphic to $(\mathbb C\langle \Sigma'\rangle, \Cup,\Delta_\bullet)$. As a consequence,  $\mathcal H'^*$ is isomorphic to the sub-Hopf algebra of $\mathcal H^*$ generated by the elements $a^*$ for $a\in\Sigma'$. 
\end{proof}
\section{The Hopf algebra of B-diagrams}\label{sec-hopfdiag}
\subsection{What B-diagrams are\label{Bdiag}}

We recall the combinatorial structure of $B$-diagrams as defined in \cite{BCL2016}, adopting a slightly modified but equivalent formulation that is more convenient for our purposes.

Let $|E|$ denote the cardinal of the set $E$, $\llbracket a,b\rrbracket:=\{a,a+1,\dots,b-1,b\}$ for any pairs of integers $a\leq b$, $E=E'\uplus E''$ when $E=E'\cup E''$ and $E'\cap E''=\emptyset$.
\begin{definition}
Let $\lambda=[\lambda_1,\cdots,\lambda_k]\in\mathbb N\setminus\{0\})^n$ be a vector, for any $k\in\llbracket 1,|\lambda|\rrbracket$, where $|\lambda|=\lambda_1+\cdots+\lambda_n$, we consider the unique pair $(\block_\lambda(k),\numb_{\lambda}(k))$ such that $\block_\lambda(k)\in\llbracket 1,n\rrbracket$, $\numb_\lambda(k)\in\llbracket 1,\lambda_{\block_\lambda(k)}\rrbracket$ and
$k=\lambda_1+\cdots+\lambda_{\numb_\lambda(k)-1}+\numb_\lambda(k)$.
\end{definition}
For convenience, any partial map $\phi:\llbracket 1,n\rrbracket\rightarrow E$ (for some $n \in \mathbb N$, and some set $E$) is represented as a word $w_1\cdots w_n$  over the alphabet $E\cup\{\sqcup\}$, where $w_i=\phi(i)$ when $i\in\phi^{-1}(E)$ and $w_i=\sqcup$ otherwise. For instance, the word $\sqcup3\sqcup21$ represents the partial map sending $2$ to $3$, $4$ to $2$ and $5$ to $1$; the elements $1$ and $3$ have no image by $\phi$.
\begin{example}
Consider $\lambda=[4,2,1,5,2]$. We represent $\block_\lambda$ and $\numb_\lambda$ as the following words :
\begin{equation*}
\block_\lambda=\overbrace{1111}^4\overbrace{22}^2\overbrace{3}^1\overbrace{44444}^5\overbrace{55}^2\mbox{ and }\numb_\lambda=\overbrace{1234}^4\overbrace{12}^2\overbrace{1}^1\overbrace{12345}^5\overbrace{12}^2.
\end{equation*}
\end{example}
\begin{definition}\label{DBDiag}
A \emph{$B$-diagram} is a $5$-tuple $G=(n, \lambda,\varphi, F^\uparrow, F_\downarrow)$ such that 
\begin{enumerate}
 \item $n$ is an integer,
 \item $\lambda=[\lambda_1,\dots,\lambda_n]$ with $\lambda_i\in\N\setminus\{0\}$ for each $i$,
 \item $\varphi$ is a bijection from  $E^\uparrow(G)$ to $E_\downarrow(G)$, for some $E^\uparrow(G),\  E_\downarrow(G)\subset \llbracket 1,|\lambda|\rrbracket$, satisfying  $\block_\lambda(a)<\block_\lambda(\varphi(a))$,  for any $a\in E^\uparrow(G)$.
 \item $F^\uparrow\subset \llbracket 1,|\lambda|\rrbracket\setminus E^\uparrow(G)$, $F_\downarrow\subset\llbracket 1,|\lambda|\rrbracket\setminus E_\downarrow(G)$\end{enumerate}
The integer $n$ is called the \emph{number of vertices} of $G$ and is also denoted by $|G|$. The number  $\omega(G)=|\lambda|$ is called the \emph{weight}. The set of the \emph{edges} of $G$ is $E(G)=\{(a,\varphi(a))\mid a\in E^\uparrow(G)\}$ with $a\in E^\uparrow(G)$. The number of edges is denoted by $\tau(G)=\#\{(a,\varphi(a))\mid a\in E^\uparrow(G)\}$.
The elements of $F^\uparrow$ (resp. $F_\downarrow$) are called the \emph{outer} (resp. \emph{inner}) \emph{free half-edges}. We set $f^\uparrow(G)=\#F^\uparrow(G)$ and 
$f_\downarrow(G)=\#F_\downarrow(G)$.\\
The set of \emph{cut outer} (resp. \emph{inner}) \emph{half edges} of $G$ is $C^\times(G)=\llbracket1,|\lambda|\rrbracket\setminus (E^\uparrow(G)\cup F^\uparrow$ (resp. 
$C_\times(G)=\llbracket1,|\lambda|\rrbracket\setminus (E_\downarrow(G)\cup F_\downarrow$).
We set also $c^\times(G)=\#C^\times(G)$ and $c_\times(G)=\#C_\times(G)$.
Where there is no ambiguity, references to the name of the diagram ($G$) can be omitted.\\
When needed we also use the notation $F^\uparrow(G)=F^\uparrow$ and $F_\downarrow(G)=F_\downarrow$.\\
We denote by $\mathbb B$ the set of $B$-diagrams.
\end{definition}

Graphically, a $B$-diagram is represented as a graph with $n$ vertices. The vertex $i$ has exactly $\lambda_i$ inner (resp. outer) cut or not cut half-edges  labeled
 by $\llbracket \lambda_1+\cdots+\lambda_{i-1}+1,\lambda_1+\cdots+\lambda_i\rrbracket$. Remark that for any $a\in  \llbracket \lambda_1+\cdots+\lambda_{i-1}+1,\lambda_1+\cdots+\lambda_i\rrbracket$, we have $\block_\lambda(a)=i$. The inner and the outer cut half edges are denoted by $\times$. An element of $E$ is represented by an edge relying an outer uncut half edge $a$ of a vertex $i$ to an inner uncut half edge $b$ of a vertex $j$ with $i<j$.

The number of $B$-diagrams $G$ such that $\omega(G)=p$ and $f^\uparrow(G) = q$ is given by the following induction (Sloane's sequence A265199\cite{Sloane}),
\begin{equation}\label{enumdiag}
d_{p,q}=\sum_{i=1}^p\sum_{j=0}^i\sum_{k=0}^i\sum_{\ell=0}^j \ell!\binom j\ell\binom {q-k+\ell}\ell\binom ij\binom ik d_{p-i,q-k+\ell},
\end{equation}
with the special cases $d_{0,0}=1$ and $d_{p,q}=0$ if $p,q\leq 0$ and $(p,q)\neq (0,0)$, such that 
$\omega(G)$ is the number of half-edges and $f^\uparrow(G)$ is the number of free (non used) outer half-edges.

\begin{example}
For instance, the $B$-diagram \begin{equation*}G=(4,[3,2,2,1],7486\sqcup\sqcup\sqcup\sqcup,\{5,6\}, \{1,2,3,5\})\end{equation*} is represented in Figure \ref{DiagExpPer1}.

 Here we have $|G|=4$, $\omega(G)=8$, $E^\uparrow=\{1,2,3,4\}$, $E_\downarrow=\{4,6,7,8\}$,
 \begin{equation}[\varphi(1),\varphi(2),\varphi(3),\varphi(4)]=[7,4,8,6],\end{equation}
  $C^\times=\{7,8\}$, and $C_\times=\emptyset$.
\begin{figure}[ht]
\centering
\includegraphics[scale=0.25]{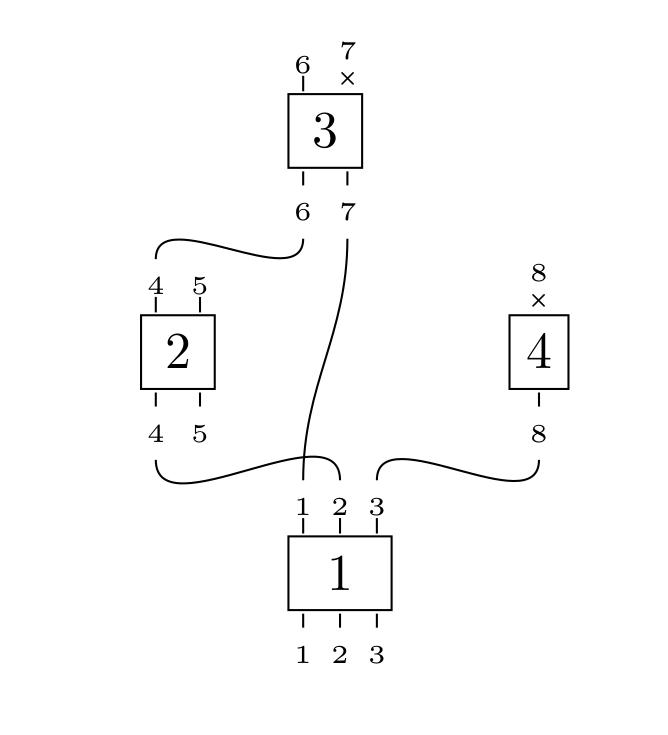}
\caption{The $B$-diagram $(4,[3,2,2,1],7486\sqcup\sqcup\sqcup\sqcup,\{5,6\}, \{1,2,3,5\})$ .}
\label{DiagExpPer1}
\end{figure}
\end{example}

\subsection{The Hopf algebra of B-diagrams\label{sec-HopfB}}

We now recall how to interpret in an algebraic way the constructions on $B$-diagrams.
\subsubsection{Algebraic structure}
\begin{definition}
Let $G=(n,\lambda,\varphi,F^\uparrow,F_\downarrow)$ and $G'=(n',\lambda',\varphi',F'^\uparrow,F'_\downarrow)$ be two $B$-diagrams. For any $k\geq 0$, any strictly increasing sequence $a_1<\dots<a_k$ in $F^\uparrow$ and any $k$-tuple of distinct integers $b_1,\dots,b_k$ in $F_\downarrow$, we define the \emph{composition} $\mcomp{a_1,\dots,a_k}{b_1,\dots,b_k}$ by
\begin{equation}
\begin{array}{c}
G'\\
\mcomp{a_1,\dots,a_k}{b_1,\dots,b_k}\\
G
\end{array}=G''=(n+n',[\lambda_1,\dots,\lambda_n,\lambda'_1,\dots,\lambda'_n],\varphi'',F''^\uparrow,F''_\downarrow)\end{equation} with
\begin{enumerate}
\item $\varphi: E^\uparrow(G'')\rightarrow E_\downarrow(G'')$, where 
\begin{itemize}
    \item $E^\uparrow(G'')=E^\uparrow(G)\cup\{i+\omega(G)\mid i\in E^\uparrow(G')\} \cup \{a_1,\dots,a_k\}$,
\item $E_\downarrow(G'')=E_\downarrow(G)\cup\{i+\omega(G)\mid i\in E_\downarrow(G')\} \cup \{b_1+\omega(G),\dots,b_k+\omega(G)\}$,
\item
$\varphi''(i)=\varphi(i)$ for any $1\leq i\leq \omega(G)$, 
\item
$\varphi''(i+\omega(G))=\varphi'(i)+\omega(G)$ for any $1\leq i\leq \omega(G')$, and
\item $\varphi''(a_i)=b_i+\omega(G)$ for any $1\leq i\leq k$.\end{itemize}
\item $F''^\uparrow=(F^\uparrow\setminus \{a_1,\cdots,a_k\} \cup \{i+\omega(G)\mid i\in F'^\uparrow\}$, and
\item $F''_\downarrow=F_\downarrow\cup  \{i+\omega(G)\mid i\in F'_\downarrow\setminus\{b_1,\cdots,b_k\}\}$.
%
\end{enumerate}

We define also \begin{equation}
\begin{array}{c}
G'\\
\comp{a_1,\dots,a_k}{b_1,\dots,b_k}\\
G
\end{array}=\left\{
\begin{array}{ll}
\begin{array}{c}
G'\\
\mcomp{a_1,\dots,a_k}{b_1,\dots,b_k}\\
G
\end{array}&\mbox{ if } a_1<\cdots<a_k\in F^{\uparrow}(G)
\mbox{ and } b_1,\dots, b_k\in F_{\downarrow}(G')\\
0&\mbox{ otherwise}
\end{array}
\right.
\end{equation}
\end{definition}
Note that the special case where $k=0$ corresponds to a simple juxtaposition $(|)$ of the $B$-diagrams (see Figure \ref{comp2} for an example). We set $G'|G'':=\begin{array}{c} G''\\\medstar\\G'\end{array}$.
 The operation $|$ endows the set of the $B$-diagrams $\mathbb B$ with a structure of  monoid  whose unity is $\varepsilon$.

\begin{figure}[ht]
\centering
\includegraphics[scale=0.24]{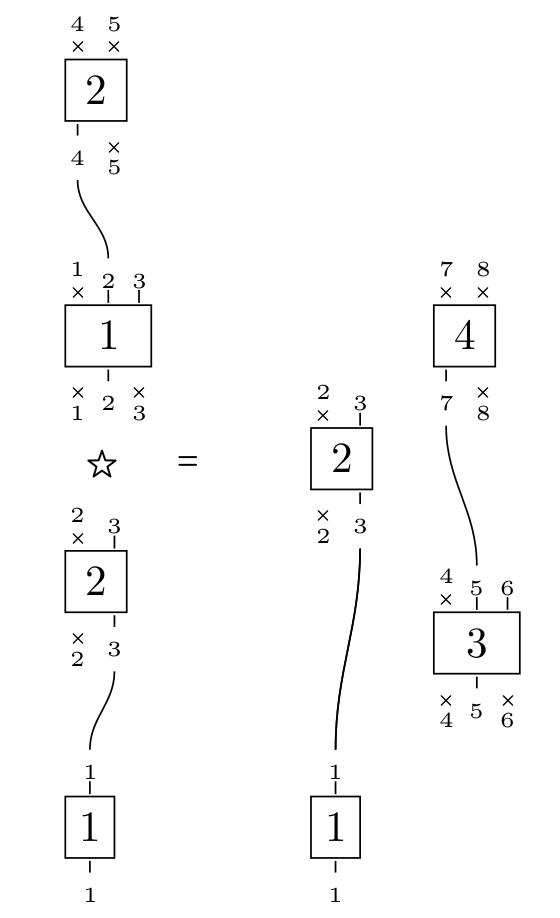}
\caption{An example of composition when $k=0$ \label{comp2}}
\end{figure} 
 
\begin{definition}
A $B$-diagram $G$ is \emph{indivisible} if $G=G'|G''$ implies $G'=G$ or $G''=G$. Let $\mathbb G$ denote the set of indivisible $B$-diagrams. 
\end{definition}
It is worth noting that the space $\mathbb C[\mathbb B]$ of $B$-diagrams, equipped with the juxtaposition operation extended by distributivity to any linear combinations, is an algebra $\mathcal B$ isomorphic to the free algebra $\mathbb C\langle \mathbb B_{i}\rangle$ generated by the set $\mathbb B_{i}$ of indivisible $B$-diagrams.

For any pair of $B$-diagrams $(G,G')$, we define
{\small
\begin{equation}\label{ProductBDiag1}
G\medstar G':=\Bigg\{\begin{array}{c}
G'\\
\mcomp{a_1,\dots,a_k}{b_1,\dots,b_k}\\
G
\end{array}\Bigg| a_1<\dots<a_k\in F^\uparrow(G), b_1,\dots,b_k\in F'_\downarrow(G')\mbox{ distinct and }k\geq 0\Bigg\}
\end{equation}
The algebra of $B$-diagrams $\mathcal B$ was defined in \cite{BCL2016}, we recall its definition here: this algebra is formally generated by the $B$-diagrams $G$ using the product $\star$ defined by
\begin{equation}\label{starHW2}
G'\star G''=\sum_{\genfrac{}{}{0pt}{}{1\leq a_1<\cdots<a_k\leq\omega(G')}{ 1\leq b_1,\dots,b_k\leq\omega(G''),\mbox{ \tiny distinct}}} \begin{array}{c}G''\\\comp{a_1,\dots,a_k}{b_1,\dots,b_k}\\G'\end{array}.
\end{equation}
}
We recall \cite{BCL2016} that the algebra $\mathcal B$ is free  on the indivisible $B$-diagrams and the empty $B$-diagram $\varepsilon=(0,[],\emptyset\rightarrow\emptyset,\emptyset,\emptyset)$ of size zero is the unity of this algebra.
 \subsubsection{Coproduct and structure of Hopf algebra}
 Let us recall some notions defined in \cite{BCL2016}.
\begin{definition}
A $B$-diagram $G$ is \emph{connected} if and only if for each $1\leq i<j\leq |G|$, there exists a sequence
of edges $(a_1,b_1),\dots,(a_k,b_k)\in E(G)$ satisfying
\begin{enumerate}
\item $i\in\{\block_\lambda(a_1),\block_\lambda(b_1)\}$ and $j\in\{\block_\lambda(a_k),\block_\lambda(b_k)\}$,
\item for each $1\leq \ell<k$ one has
{\small \begin{equation}\block_\lambda(a_\ell)\in\{\block_\lambda(a_{\ell+1}),\block_\lambda(b_{\ell+1})\}\end{equation}} or
{\small \begin{equation} \block_\lambda(b_\ell)\in\{\block_\lambda(a_{\ell+1}),\block_\lambda(b_{\ell+1})\}.\end{equation}}
\end{enumerate}
\end{definition}
\begin{definition}\label{subdiag}
Let $G=(n,\lambda,\varphi,F^\uparrow,F_\downarrow)$ be a $B$-diagram. A \emph{sub-$B$-diagram} of $G$ is completely determined by a sequence $1\leq i_1<\cdots<i_{n'}\leq n$. More precisely, we define the $B$-diagram $G[i_1,\dots,i_{n'}]=(n',\lambda',\varphi',F'^\uparrow,F'_\downarrow)$ in the following way:  we consider the set
\begin{equation}\hat H=\bigcup_{k=1}^{n'}\{a\in\llbracket 1,\omega(G)\rrbracket\mid i_k\in \block(a)\}\end{equation} and the only increasing bijection $\phi: \hat H\rightarrow \llbracket1,\lambda_{i_1}+\cdots+\lambda_{i_{n'}}\rrbracket$. Hence, we set
\begin{enumerate}
\item $\lambda'=[\lambda_{i_1},\dots,\lambda_{i_{n'}}]$,
\item $\varphi'$ is the only   bijection $\phi(E^\uparrow(G)\cap \hat H)\rightarrow \phi(E_\downarrow(G)\cap \hat H)$ such that $\varphi'\circ\phi=\phi\circ\varphi$.
\item $F'^\uparrow=\phi\big(F^\uparrow\cap \hat H\big)$,\item $F'_\downarrow=\phi\big(F_\downarrow\cap \hat H\big)$.
\end{enumerate}
If $I=[i_1,\dots,i_{n'}]$ is an increasing sequence of vertices of $G$, we denote by  $\complement_G I=[j_1,\dots,j_{n-n'}]$, the only increasing sequence, called \emph{complement of $I$ in $G$}, such that $\{j_1,\dots,j_{n-n'}\}=\llbracket 1,|G|\rrbracket\setminus \{i_1,\dots,i_{n'}\}$.
\end{definition}
Informally, a sub-diagram is constructed from a subset \(E'\) of vertices by keeping all and only the edges joining two vertices of $E'$, and then relabelling the vertices from $1$.
\begin{example}
Let $G$ be the $B$-diagram of Figure \ref{DiagExpPer1}. Set $i_1=1$ and $i_2=3$, we consider $G[1,3] = (n',\lambda', \varphi', F'^\uparrow, F'_\downarrow)$. Following Definition \ref{subdiag}, we have
\begin{enumerate}
\item $\lambda'=[3,2]$,
\item $\varphi'$ sends $1$ to $5$ and respectively $2,3,4,5$ to $\sqcup$, hence $\varphi' = (5\sqcup\sqcup\sqcup\sqcup)$,
\item $F'^\uparrow =\phi\big(\{2,3,6\}\big)=\{(2,3,4)\}$,
\item $F'_\downarrow =\phi\big(\{1,2,3,6\}\big)=\{1,2,3,4\}$.
\end{enumerate}
 We find $G[1,3]=(2,[3,2],5\sqcup\sqcup\sqcup\sqcup,\{2,3,4\},\{1,2,3,4\})$ (see Figure \ref{ExpSubdiag}).
\begin{figure}[ht]
\centering
\includegraphics[scale=0.2]{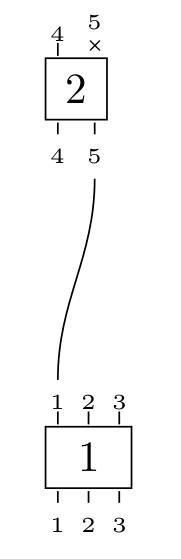}

\caption{The $B$-diagram $G[1,3]$ of $G= (4,[3,2,2,1],7486\sqcup\sqcup\sqcup\sqcup,\{5,6\}, \{1,2,3,5\})$\label{ExpSubdiag}}
\end{figure}
\end{example}
\begin{remark}
It is immediate to check that every sub-diagram of a $B$-diagram is itself a $B$-diagram.
\end{remark}
\begin{definition}\label{DefConnectecComponent}A \emph{connected component} of a $B$-diagram $G$ is a sequence $i_1<\cdots<i_{n'}$ such that $G[i_1,\dots,i_{n'}]$ is a connected sub $B$-diagram which is maximal in the sense that if we add any vertex $i$ in the sequence $i_1<\cdots<i_{n'}$ then we obtain a sequence $i'_1<\cdots<i'_{n'+1}$ such that $G[i'_1,\dots,i'_{n'+1}]$ is not connected. Let $\mathrm{Connected}(G)$ denote the set of the connected components of $G$.
\end{definition}

\begin{example}\rmfamily
The $B$-diagram in Figure \ref{DiagExpPer1} is connected whilst the $B$-diagram in Figure \ref{bdiag3122} has two connected components $[1,2,3]$ and $[4]$.
\begin{figure}[ht]
\centering
\includegraphics[scale=0.23]{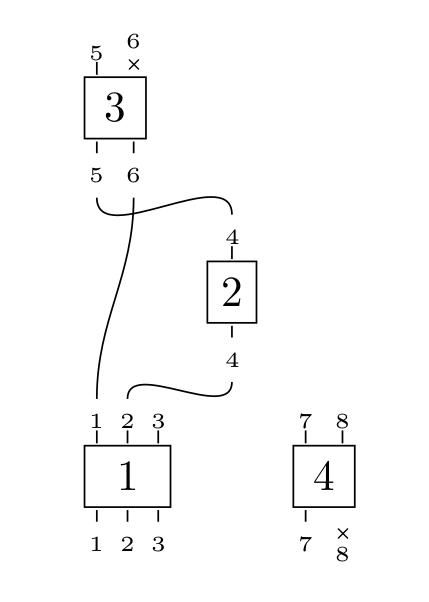}
\caption{A non connected $B$-diagram\label{bdiag3122}}
\end{figure}
\end{example}
\begin{definition}
A sequence $1\leq i_1<\dots<i_{n'}\leq n$ is \emph{isolated} in $G$  if for each $(a,b)\in E$ implies that $\block_\lambda(a)$ and $\block_\lambda(b)$ are both in
$\{i_1,\dots,i_{n'}\}$ or both in $\complement_G \{i_1,\dots,i_{n'}\}$.\\
 In other words, if $1\leq i_1<\dots<i_{n'}\leq n$ is isolated in $G$ if and only if this sequence splits into an union of connected components of $G$.\\ We denote by $\mathrm{Iso}(G)$ the set of isolated sequences of $G$ and set 
 \begin{equation}\mathrm{Split}(G)=\big\{\big(I,{\complement}_GI\big)\mid I\in\mathrm{Iso}(G)\big\}\end{equation}
 \end{definition}

  In \cite{BCL2016}, we consider  the linear map $\Delta:\mathcal B\longrightarrow\mathcal B\otimes\mathcal B$ defined by
\begin{equation}\label{DeltaDef1}
\Delta(G)=\sum_{I\in\mathrm{Iso}(G)}G[I]\otimes G[{\complement}_G I].\end{equation}
Equivalently, 
\begin{equation}\label{DeltaDef2}
\Delta(G)=\sum_{\mathcal I\subset\mathrm{Connected}(G)}G\Bigg[\bigcup_{I\in\mathcal I}I\Bigg] \otimes 	G\Bigg[{\complement}_G\bigcup_{I\in\mathcal I}I\Bigg].\end{equation}
One has
\begin{equation}\label{DeltaDef3}
\Delta(G)=\sum_{\genfrac{}{}{0pt}{}{I\cup J=\llbracket 1,|G|\rrbracket}{ I\cap J=\emptyset}}G\langle I\rangle\otimes G\langle J\rangle,\end{equation}
where
\begin{equation}
G\langle I\rangle=\left\{\begin{array}{ll}G[I]&\mbox{ if }I\in\mathrm{Iso}(G)\\0&\mbox{ otherwise} \end{array}\right. .
\end{equation}
Obviously, $\Delta$ is a coassociative, cocomutative product and $\epsilon$ is its counity.

We set $\mathcal B_k=\mathrm{span}\{G\mid \omega(G)=k\}$.  Note that $\mathcal B$ splits into the direct sum $\mathcal B=\bigoplus_k\mathcal B_k$ and the dimension of each space $\mathcal B_k$ is finite. The algebra $(\mathcal B,\star)$ is a graded with finite dimensional graded component.
The unit of this algebra is the empty $B$-diagram $\varepsilon$.

We proved in \cite{BCL2016} that $\mathcal B$ is a graded bialgebra with finite dimensional graded component. Hence, $(\mathcal B,\star,\Delta)$ is a graded Hopf algebra.
\subsection{The dual Hopf algebra of B-diagrams}\label{subsec-dualbalg}
Since the dimension of the space $\mathcal B_n$ generated by the $B$-diagrams with exactly $n$ half edges is finite, it is isomorphic to its dual space $\mathcal B^*_n$. The space $\mathcal B^*_n$ is generated by the endomorphisms $D_G$ satisfying $D_G(G')=\delta_{G,G'}$,  for any pair of $B$-diagrams $(G,G')$.\\
Let us set $\mathcal B^*:=\bigcup_n \mathcal B^*_n$. The goal of this section is to describe a Hopf algebra structure on $\mathcal B^*$ which dualizes the Hopf algebra of $\mathcal B$.
\subsubsection{Permuting vertices in a B-diagram}

In the aim to define properly an action of the symmetric group on the vertices, we need to consider more general diagrams. 
\begin{definition} We define a \emph{(simple-loop)-free diagram}, or $F$-diagram, as a 5-tuple $G=(n,\lambda,\varphi,F^\uparrow,F_\downarrow)$ that meets all the conditions of  Definition \ref{DBDiag} except point 3, in which  $\block_\lambda(a)<\block_\lambda(\varphi(a))$ is replaced by
$\block_\lambda(a)\neq \block_\lambda(\varphi(a))$.
\end{definition} 
\begin{example}
Figure \ref{counterexGs} shows a $F$-diagram which is not a $B$-diagram.\end{example}
We consider the action of the symmetric group $\S_n$ on the component of a vector $\lambda=[\lambda_1,\cdots,\lambda_n]$, i.e.
\begin{equation}\lambda^\sigma=[\lambda_{\sigma(1)},\dots,\lambda_{\sigma(n)}],\end{equation}
or equivalently $\lambda^\sigma_i=\lambda_{\sigma^{-1}(i)}$, for any $i\in\llbracket 1,n\rrbracket$. 
Let $\sigma[\lambda]\in\S_{|\lambda|}$ be the permutation such that \begin{equation}[v^1_{1},\dots,v^1_{\lambda_1},\dots,v^n_{1},\dots,v^n_{\lambda_n}]^{\sigma[\lambda]}=[v^{\sigma(1)}_1,\dots,v^{\sigma(1)}_{\lambda_{\sigma(1)}},\dots,v^{\sigma(n)}_1,\dots,v^{\sigma(n)}_{\lambda_{\sigma(n)}}].\end{equation}
If $\varphi$ is a partial function represented by the word $w=w_1\cdots w_n$ and $\sigma\in\S_n$ is a permutation, we denote by $\varphi^\sigma$ the partial function represented by $w^{\sigma}=w_{\sigma(1)}\cdots w_{\sigma(n)}.$\\
With this notation, we have 
\begin{equation}\label{blocksigma}\block_{\lambda^\sigma}=(\sigma^{-1}(\block_\lambda(1))\cdots\sigma^{-1}(\block_\lambda(|\lambda|))^{\sigma[\lambda]}\end{equation}
and 
\begin{equation}\label{numsigma}\numb_{\lambda^\sigma}=\numb_{\lambda}^{\sigma[\lambda]}.\end{equation}

\begin{example}
    For instance, let
    $\lambda=[3,2,2,1]$ and $\sigma=2314\in \S_4$.
    One has $\lambda^\sigma=[2,2,3,1]$ and $\sigma[\lambda]=45671238$. 
    We have
     \begin{equation*}\begin{array}{r}(\sigma^{-1}(1)\sigma^{-1}(1)\sigma^{-1}(1)\sigma^{-1}(2)\sigma^{-1}(2)\sigma^{-1}(3)\sigma^{-1}(3)\sigma^{-1}(4))^{45671238}=\\
     (33311224)^{45671238}=11223334=\block_{\lambda^\sigma}.\end{array}\end{equation*} 
    Also, we have  $\numb_\lambda^{\sigma[\lambda]}=(12312121)^{45671238}=12121231=\numb_{\lambda^\sigma}$.
\end{example}
\begin{definition}
Let $G=(n,\lambda,\varphi,F^\uparrow,F_\downarrow)$ be a $F$-diagram and $\sigma\in \S_n$. We define $G^\sigma:=(n,\lambda^\sigma,\widetilde\varphi, {\widetilde F}^\uparrow,{\widetilde F}_\downarrow)$ as the $5$-tuple satisfying
\begin{itemize}
\item $i\in \widetilde F^\uparrow$ if and only if $\sigma[\lambda](i)\in F^\uparrow$,
\item $i\in \widetilde F_\downarrow$ if and only if $\sigma[\lambda](i)\in F_\downarrow$,
\item $\widetilde \varphi(i)=j$ if and only if $\varphi(\sigma[\lambda](i))=\sigma[\lambda](j)$. In another words, $\widetilde\varphi$ is represented by the word $(\mu(\varphi(1))\cdots \mu(\varphi(\omega(G))))^{\sigma[\lambda]}$ where $\mu(\sqcup)=\sqcup$ and $\mu(i)=(\sigma[\lambda])^{-1}(i)$ if $i\in\llbracket1,\omega(G)\rrbracket$.
\end{itemize}
\end{definition}
Remark that if $G$ is a $B$-diagram, $G^\sigma$ is not necessarily a $B$-diagram.
%
\begin{example}
Let us denote by $G$ the $B$-diagram defined in 
figure \ref{DiagExpPer1} and set $\sigma=(123)\in\S_4$. We have
\begin{equation*}G^\sigma=(n,[2,2,3,1],3\sqcup\sqcup\sqcup418\sqcup,\{2,3\},\{2,5,6,7\}),\end{equation*} as shown in Figure \ref{counterexGs}.
Indeed $[3,2,2,1]^{(123)}=[2,2,3,1]$, $\sigma[\lambda]=45671238$, $\sigma[\lambda]^{-1}=56712348$ and hence \begin{equation*}
\widetilde\varphi=(4183\sqcup\sqcup\sqcup\sqcup)^{45671238}=3\sqcup\sqcup\sqcup418\sqcup,
\end{equation*}
$\widetilde F^{\uparrow}=\{\sigma[\lambda]^{-1}(5),\sigma[\lambda]^{-1}(6)\}=\{2,3\}$, and
\begin{equation*}\widetilde F_{\downarrow}=\{\sigma[\lambda]^{-1}(1),\sigma[\lambda]^{-1}(2),\sigma[\lambda]^{-1}(3),\sigma[\lambda]^{-1}(5)\}=\{5,6,7,2\}.\end{equation*} We notice that it
is clearly not a $B$-diagram.
\begin{figure}[ht]
\centering
\includegraphics[scale=0.25]{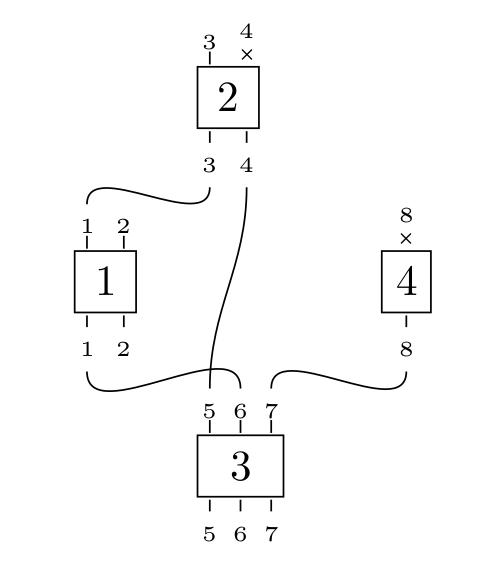}
\caption{The $5$-tuple $(4,[3,2,2,1],7486\sqcup\sqcup\sqcup\sqcup,\{5,6\},\{1,2,3,5\})^{(123)}$ is not a $B$-diagram.}
\label{counterexGs}
\end{figure}
\end{example}
The notion of sub-diagram extends naturally to $F$-diagrams. The following result shows the compatibility between the sub-diagram and the action of the symetric group.
\begin{lemma}
If $G'=G[i_1,\dots,i_{n'}]$ is a sub-diagram of $G$ then \begin{equation}G^\sigma[\sigma^{-1}(i_1),\dots,\sigma^{-1}(i_{n'})]=G'.\end{equation}
\end{lemma}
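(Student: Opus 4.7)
My plan is to verify the equality $G^\sigma[\sigma^{-1}(i_1),\dots,\sigma^{-1}(i_{n'})]=G'$ componentwise, by unpacking the definitions of $G^\sigma$ and of the sub-diagram construction and checking agreement on each of the four pieces of data $(\lambda',\varphi',F'^{\uparrow},F'_\downarrow)$. The underlying idea is that the $\sigma$-action on $G$ merely relabels vertices---and correspondingly the half-edges, via the induced permutation $\sigma[\lambda]$---without altering the combinatorial structure, and hence must commute with restriction to any fixed set of vertices.

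Write $j_t=\sigma^{-1}(i_t)$ and $J=[j_1,\dots,j_{n'}]$. The check on the weight vector is immediate: using $\lambda^\sigma_{j_t}=\lambda_{\sigma(j_t)}=\lambda_{i_t}$, the $\lambda$-components of $G^\sigma[J]$ and of $G'$ both equal $[\lambda_{i_1},\dots,\lambda_{i_{n'}}]$.

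For the remaining data, I would first identify the relevant sets of half-edges. Let $\hat H_G\subset\llbracket 1,\omega(G)\rrbracket$ be the set of half-edges of $G$ whose vertex lies in $\{i_1,\dots,i_{n'}\}$, and let $\hat H_{G^\sigma}$ be the analogous set for $G^\sigma$ with vertices $\{j_1,\dots,j_{n'}\}$. From~\eqref{blocksigma} one reads off that $\sigma[\lambda]$ restricts to a bijection $\hat H_{G^\sigma}\to\hat H_G$; moreover, by its very definition, $\sigma[\lambda]$ preserves the intra-vertex order of half-edges and sends the $t$-th selected vertex of $G^\sigma$ onto the $t$-th selected vertex of $G$. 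This yields the compatibility $\phi_G\circ\sigma[\lambda]|_{\hat H_{G^\sigma}}=\phi_{G^\sigma}$ between the two relabelings into $\llbracket 1,|\lambda'|\rrbracket$. The agreement of $\varphi'$, ${F'}^{\uparrow}$ and $F'_\downarrow$ then follows directly from the definitions, since each of these is obtained by transporting the corresponding component of $G$ (restricted to $\hat H_G$) along the relabeling bijection, and the two transports are conjugate under $\sigma[\lambda]$.

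The main obstacle is notational rather than substantive: the sequence $J$ need not be increasing, so the statement implicitly extends the sub-diagram construction of Definition~\ref{subdiag} to arbitrary (not necessarily sorted) sequences, the weights being listed in the given order and with $\phi_{G^\sigma}$ taken to preserve intra-vertex order and respect the vertex order of $J$ rather than the integer order on $\hat H_{G^\sigma}$. Once this extension is fixed, the claim reduces to a routine verification using only the explicit formulas for $\block_{\lambda^\sigma}$, $\numb_{\lambda^\sigma}$ and $\sigma[\lambda]$ already established. I would also observe that although $G^\sigma$ itself need not be a $B$-diagram (only an $F$-diagram, as illustrated in Figure~\ref{counterexGs}), the sub-diagram $G^\sigma[J]$ automatically inherits the $B$-diagram structure from $G'$.
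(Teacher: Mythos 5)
Your proposal is correct and follows the same route as the paper's own proof: a componentwise verification starting from $\lambda^\sigma_{\sigma^{-1}(i_t)}=\lambda_{i_t}$. In fact you go further than the paper, whose proof checks only the $\lambda$-component and asserts that the rest follows; your explicit treatment of the bijection $\sigma[\lambda]\colon\hat H_{G^\sigma}\to\hat H_G$ and of the transport of $\varphi$, $F^\uparrow$, $F_\downarrow$, as well as your remark that the sequence $\sigma^{-1}(i_1),\dots,\sigma^{-1}(i_{n'})$ need not be increasing (so that Definition~\ref{subdiag} must implicitly be extended to arbitrary sequences, listing the vertices in the given order), fills in genuine gaps that the paper leaves to the reader.
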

\begin{proof}
Let $G=(n,\lambda,\varphi,F^\uparrow,F_\downarrow)$ be a $F$-diagram and 
\begin{equation}G'=G[i_1,\cdots,i_{n'}]=(n',\lambda',\varphi',F'^\uparrow,F'_\downarrow)\end{equation} be a sub-diagram of $G$. Let us set $G^\sigma[\sigma^{-1}(i_1),\cdots,\sigma^{-1}(i_{n'})]=(n',\lambda'',\varphi'',F''^\uparrow,F''_\downarrow)$. For any $k\in\{1,\dots,n\}$, we have $\lambda^\sigma_{k}=\lambda_{\sigma(k)}$. Hence, the result follows from \begin{equation}\lambda''=[\lambda^\sigma_{\sigma^{-1}(i_1)},\dots,\lambda^\sigma_{\sigma^{-1}(i_{n'})}]=[\lambda_{i_1},\dots,\lambda_{i_n}]=\lambda'.\end{equation}
\end{proof}
We recall the definition of the shuffle product on sets
\begin{equation}
\left\{\begin{array}{lll}  \epsilon \shuffle u = &  u \shuffle \epsilon = \{u\} &\mbox{ if }\epsilon \mbox{ denotes the empty set } \\
(au) \shuffle (bv) = &  a(u \shuffle bv) \cup  b(au \shuffle v)  &\mbox{ otherwise} \end{array}\right.
\end{equation}
\begin{lemma}\label{Relation1}
Let $(a,b) \in \{ 1, 2, \dots ,n\}^2 \cup \{ n+1, n+2, \dots, n+n'\}^2$ and $\sigma\in 12\cdots n\shuffle (n+1)\cdots (n+n')$. We have $a<b$ if and only if $\sigma^{-1}(a) < \sigma^{-1}(b)$.
\end{lemma}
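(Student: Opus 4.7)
The plan is to unfold the definition of shuffle. By definition, a permutation $\sigma$ lies in $12\cdots n \shuffle (n+1)\cdots(n+n')$ exactly when, writing $\sigma$ as the word $\sigma(1)\sigma(2)\cdots\sigma(n+n')$, the subsequence formed by the letters belonging to $\{1,\dots,n\}$ is $12\cdots n$ and the subsequence formed by the letters belonging to $\{n+1,\dots,n+n'\}$ is $(n{+}1)(n{+}2)\cdots(n{+}n')$. Equivalently, in each of the two blocks, the letters appear in $\sigma$ in increasing order of value.

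Next, I would translate the condition on $\sigma^{-1}$ into a statement about positions: since $\sigma$ is viewed as a bijection $\{1,\dots,n+n'\}\to\{1,\dots,n+n'\}$ sending position to value, $\sigma^{-1}(k)$ is the unique position at which the value $k$ occurs in the word $\sigma$. Thus the inequality $\sigma^{-1}(a) < \sigma^{-1}(b)$ is simply the statement that $a$ occurs strictly before $b$ in $\sigma$.

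Now assume $a,b$ belong to the same block, say both lie in $\{1,\dots,n\}$ (the other case is identical). The relative order of $a$ and $b$ in the full word $\sigma$ agrees with their relative order in the subsequence extracted from the block $\{1,\dots,n\}$, because extracting a subsequence preserves the relative order of any two elements it retains. By the shuffle property above, that subsequence is $12\cdots n$, so $a$ precedes $b$ in it if and only if $a<b$. Combining these two equivalences yields $a<b \iff \sigma^{-1}(a)<\sigma^{-1}(b)$, which is the claim.

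There is no real obstacle here; the lemma is essentially a rewriting of the defining property of the shuffle product. The only care needed is to keep track of the two dual viewpoints (treating $\sigma$ as a bijection versus as a word) and to verify that the argument applies symmetrically to both blocks, which it does since the shuffle treats the two factors on equal footing.
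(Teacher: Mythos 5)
Your proof is correct. The paper actually states Lemma~\ref{Relation1} without any proof, treating it as immediate from the definition of the shuffle; your argument simply spells out that implicit reasoning (the one-line word of $\sigma$ restricted to either block is increasing, and subsequence extraction preserves relative order of positions), and it matches the convention the paper uses, as one can check on its example $\sigma=1423\in 123\shuffle 4$.
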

In the following lemma, we consider a permutation as words.
\begin{lemma}\label{DualBDiag}
Let $G$ and $G'$ be two $B$-diagrams and \begin{equation}\sigma \in 1\cdots |G|\shuffle (|G|+1)\cdots (|G|+|G'|)\end{equation} then $(G|G')^{\sigma}$ is a $B$-diagram such that \begin{equation}(G|G')^{\sigma}[\sigma(1)\cdots \sigma(|G|)]=G\mbox{ and  }(G|G')^{\sigma}[\sigma(|G|+1)\cdots \sigma(|G|+|G'|)]=G'.\end{equation}
\end{lemma}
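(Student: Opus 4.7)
The plan has two steps: first to check that $(G|G')^\sigma$ is again a $B$-diagram (and not merely an $F$-diagram), and then to identify the two prescribed sub-diagrams with $G$ and $G'$. Both steps rest on Lemma~\ref{Relation1}, which guarantees that the shuffle $\sigma$ (equivalently $\sigma^{-1}$) preserves the relative order on each of the two groups $\llbracket 1,|G|\rrbracket$ and $\llbracket |G|+1,|G|+|G'|\rrbracket$, together with the compatibility between the sub-diagram construction and the action of the symmetric group established just above.

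For the first step, I denote by $\mu$ the weight vector of $G|G'$ (the concatenation of those of $G$ and $G'$). By the very definition of the juxtaposition (the case $k=0$ of the composition $\medstar$), no edge of $G|G'$ joins a half-edge of a vertex in $\llbracket 1,|G|\rrbracket$ to a half-edge of a vertex in $\llbracket |G|+1,|G|+|G'|\rrbracket$; hence for any edge $(a',b')$ of $G|G'$ the two vertex labels $\block_\mu(a')$ and $\block_\mu(b')$ lie in a common group appearing in Lemma~\ref{Relation1}. By the definition of the $\S_n$-action, an edge $(i,\widetilde\varphi(i))$ of $(G|G')^\sigma$ corresponds to the edge $(\sigma[\mu](i),\sigma[\mu](\widetilde\varphi(i)))$ of $G|G'$, while formula~\eqref{blocksigma} gives $\block_{\mu^\sigma}(k)=\sigma^{-1}(\block_\mu(\sigma[\mu](k)))$. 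Combining the $B$-diagram condition $\block_\mu(\sigma[\mu](i))<\block_\mu(\sigma[\mu](\widetilde\varphi(i)))$ in $G|G'$ with the order preservation of $\sigma^{-1}$ supplied by Lemma~\ref{Relation1} then yields $\block_{\mu^\sigma}(i)<\block_{\mu^\sigma}(\widetilde\varphi(i))$, which is precisely the $B$-diagram condition.

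For the second step, I observe that $G=(G|G')[1,\dots,|G|]$ and $G'=(G|G')[|G|+1,\dots,|G|+|G'|]$ by the very construction of the juxtaposition, and then apply the compatibility lemma proved just above the statement, namely $G^\sigma[\sigma^{-1}(i_1),\dots,\sigma^{-1}(i_{n'})]=G[i_1,\dots,i_{n'}]$, to the $B$-diagram $G|G'$. This immediately gives $(G|G')^\sigma[\sigma^{-1}(1),\dots,\sigma^{-1}(|G|)]=G$ and $(G|G')^\sigma[\sigma^{-1}(|G|+1),\dots,\sigma^{-1}(|G|+|G'|)]=G'$, the increasing character of both index sequences being exactly what Lemma~\ref{Relation1} guarantees. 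This matches the conclusion of the lemma once the subscript $\sigma(1)\cdots\sigma(|G|)$ (respectively $\sigma(|G|+1)\cdots\sigma(|G|+|G'|)$) is read as the ordered list of positions at which the values $1,\dots,|G|$ (respectively $|G|+1,\dots,|G|+|G'|$) occur in the shuffle word $\sigma$.

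The only delicate point is the bookkeeping between the two simultaneous permutations, $\sigma$ acting on the $|G|+|G'|$ vertices and $\sigma[\mu]$ acting on the $|\mu|$ half-edges: one must take care that the argument of $\block_\mu$ in~\eqref{blocksigma} is the image of $i$ under $\sigma[\mu]$ rather than under $\sigma$ itself. Once this is kept straight, every remaining verification reduces to a single application of Lemma~\ref{Relation1} or of the sub-diagram compatibility lemma, and no further combinatorial ingredient is needed.
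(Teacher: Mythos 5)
Your proof is correct and follows essentially the same route as the paper's: both arguments reduce the $B$-diagram condition for $(G|G')^\sigma$ to the order-preservation property of Lemma~\ref{Relation1} applied via formula~\eqref{blocksigma}, using the fact that no edge of $G|G'$ joins the two vertex groups. The only differences are cosmetic and in your favour: you treat the two cases (edge inside $G$, edge inside $G'$) uniformly where the paper spells them out separately, and you explicitly derive the identification of the two sub-diagrams from the compatibility lemma $G^\sigma[\sigma^{-1}(i_1),\dots,\sigma^{-1}(i_{n'})]=G[i_1,\dots,i_{n'}]$ (including the needed remark reconciling $\sigma(i)$ with $\sigma^{-1}(i)$), a step the paper's proof leaves implicit.
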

\begin{proof}
Let $G=(n,\lambda,\varphi,F^\uparrow,F_\downarrow)$ and
$G'=(n',\lambda',\varphi',F'^\uparrow,F'_\downarrow)$. For any $\sigma\in\S_{n+n'}$, we have  $(G|G')^\sigma=(n+n',(\lambda\lambda')^\sigma,\varphi'',F''^\uparrow,F''_\downarrow)$ with
\begin{equation}\varphi''= (\mu(\varphi(1))\cdots\mu(\varphi(\omega(G)))\mu(\varphi'(1)+\omega(G))\cdots\mu(\varphi'(\omega(G'))+\omega(G)))^{\sigma [ \lambda \lambda']},\end{equation}
where $\mu(\sqcup)=\sqcup$ and $\mu(i)=\sigma[\lambda\lambda']^{-1}(i)$ for any $i\in\llbracket1,\omega(G)+\omega(G')\rrbracket$,
\begin{equation}F''^\uparrow =\{\sigma[\lambda\lambda']^{-1}(i)\mid i\in F^\uparrow\}\cup
\{\sigma[\lambda\lambda']^{-1}(i+\omega(G))\mid i\in F'^\uparrow\},\end{equation}
and \begin{equation}F''_\downarrow=\{\sigma[\lambda\lambda']^{-1}(i)\mid i\in F_\downarrow\}\cup
\{\sigma[\lambda\lambda']^{-1}(i+\omega(G))\mid i\in F'_\downarrow\}.\end{equation}
We suppose now that $\sigma\in 1\cdots n\shuffle (n+1)\cdots (n+n')$.
One has to prove that for any $a\in\llbracket 1,\omega(G)+\omega(G')\rrbracket^2$, $\block_{(\lambda\lambda')^\sigma}(a)<\block_{(\lambda\lambda')^\sigma}(\varphi''(a))$ when $\varphi''(a)\neq\sqcup$. 
Let $a\in\llbracket 1,\omega(G)+\omega(G')\rrbracket$. By construction, we have
{\small\begin{equation}\label{Eq_blocklambda''}
\block_{(\lambda\lambda')^\sigma}(a)=\left\{\begin{array}{ll}
\sigma^{-1}(\block_\lambda(\sigma[\lambda\lambda'](a)))\qquad\mbox{ if } \sigma[\lambda\lambda'](a)\in\llbracket 1,\omega(G)\rrbracket\\
\sigma^{-1}(\block_{\lambda'}(\sigma[\lambda\lambda'](a)-\omega(G))+n)\\\qquad\qquad\qquad\mbox{ if } \sigma[\lambda\lambda'](a)\in\llbracket \omega(G)+1,\omega(G)+\omega(G')\rrbracket
\end{array}
\right.
\end{equation}}
and, when  $\varphi''(a)\neq\sqcup$,
{\small \begin{equation}\label{eq_phi''}
\varphi''(a)=\left\{
\begin{array}{l}
\sigma[\lambda\lambda']^{-1}(\varphi(\sigma[\lambda\lambda'](a))\qquad\mbox{ if } \sigma[\lambda\lambda'](a)\in\llbracket 1,\omega(G)\rrbracket\\
\sigma[\lambda\lambda']^{-1}(\varphi'(\sigma[\lambda\lambda'](a)-\omega(G))+\omega(G))\\\qquad\qquad\qquad \mbox{ if }\sigma[\lambda\lambda'](a)\in\llbracket \omega(G)+1,\omega(G)+\omega(G')\rrbracket.
\end{array}\right.
\end{equation}}

From Equality \eqref{eq_phi''}, one has to examine two cases:
\begin{enumerate}
\item If $\sigma[\lambda\lambda'](a)\in\llbracket 1,\omega(G)\rrbracket$ then 
\begin{equation}\varphi''(a)=\sigma[\lambda\lambda']^{-1}(\varphi(\sigma[\lambda\lambda'](a))).\end{equation}
 Since $G$ is a $B$-diagram, one has \begin{equation}\block_\lambda(\sigma[\lambda\lambda'](a))<\block_\lambda(\varphi(\sigma[\lambda\lambda'](a))).\end{equation} Hence, by Lemma \ref{Relation1} and Equality \eqref{Eq_blocklambda''}, one obtains
\begin{equation}
\begin{array}{rcl}
\block_{(\lambda\lambda')^\sigma}(a)&=&\sigma^{-1}(\block_\lambda(\sigma[\lambda\lambda'](a)))\\&<&\sigma^{-1}(\block_\lambda(\varphi(\sigma[\lambda\lambda'](a)))).\end{array}\end{equation}
But by Equality \eqref{eq_phi''}, we have also $\varphi(\sigma[\lambda\lambda'](a))=\sigma[\lambda\lambda'](\varphi''(a))$ and so using again Equality \eqref{Eq_blocklambda''}, we obtain
{\small
\begin{equation}
\block_{(\lambda\lambda')^\sigma}(a)<\sigma^{-1}
(\block_\lambda(\sigma[\lambda\lambda'](\varphi''(a))))=\block_{(\lambda\lambda')^\sigma}(\varphi''(a)).
\end{equation}
}
\item  If $\sigma[\lambda\lambda'](a)\in\llbracket \omega(G)+1,\omega(G)+\omega(G')\rrbracket$ then 
\begin{equation}\varphi''(a)=\sigma[\lambda\lambda']^{-1}(\varphi'(\sigma[\lambda\lambda'](a)-\omega(G))+\omega(G)).\end{equation} Since $G'$ is a $B$-diagram, one has
 \begin{equation}\block_{\lambda'}(\sigma[\lambda\lambda'](a)-\omega(G))<\block_{\lambda'}(\varphi'(\sigma[\lambda\lambda'](a)-\omega(G))).\end{equation} Hence, by Lemma \ref{Relation1} and Equality \eqref{Eq_blocklambda''}, one obtains
\begin{equation}\begin{array}{rcl}\block_{(\lambda\lambda')^\sigma}(a)&=&\sigma^{-1}(\block_{\lambda'}(\sigma[\lambda\lambda'](a)-\omega(G))+n)\\&<&\sigma^{-1}(\block_{\lambda'}(\varphi'(\sigma[\lambda\lambda'](a)-\omega(G))+n)).\end{array}\end{equation}
But by Equality \eqref{eq_phi''}, we have also \begin{equation}\varphi'(\sigma[\lambda\lambda'](a)-\omega(G))=\sigma[\lambda\lambda'](\varphi''(a))-\omega(G)\end{equation} and so using again Equality \eqref{Eq_blocklambda''}, we obtain
{\small \begin{equation}
\block_{(\lambda\lambda')^\sigma}(a)<\sigma^{-1}
(\block_\lambda(\sigma[\lambda\lambda'](\varphi''(a)-\omega(G))+n)=\block_{(\lambda\lambda')^\sigma}(\varphi''(a)).
\end{equation}}
\end{enumerate}
This shows that $(G|G')^{\sigma}$ is a $B$-diagram. 
\end{proof}
\begin{example}\rmfamily
Let \begin{equation*}G=(3,[3,1,2],64\sqcup5\sqcup\sqcup,\{3,5\},\{1,2,3\}) \text{ and }
G'=(1,[2],\sqcup\sqcup,\{1,2\},\{1\}).\end{equation*}
Then 
$
(G|G')=(4,[3,1,2,2],64\sqcup\sqcup5\sqcup\sqcup\sqcup\sqcup, \{3,5,7,8\},
\{1,2,3,7\})$.
This diagram is illustrated in Figure \ref{bdiag3122}.\\
We consider also the permutation $\sigma=1423\in 123\shuffle 4$. We have $\sigma[\lambda\lambda']=12378456$ and we check easily that 
\begin{equation*}
(G|G')^\sigma=(4,[3,2,1,2],86\sqcup\sqcup\sqcup7\sqcup\sqcup,\{3,4,5,7\},\{1,2,3,4\}),
\end{equation*}
is a $B$-diagram. It corresponds to the following graphical representation:
\begin{figure}[ht]
\centering
\includegraphics[scale=0.27]{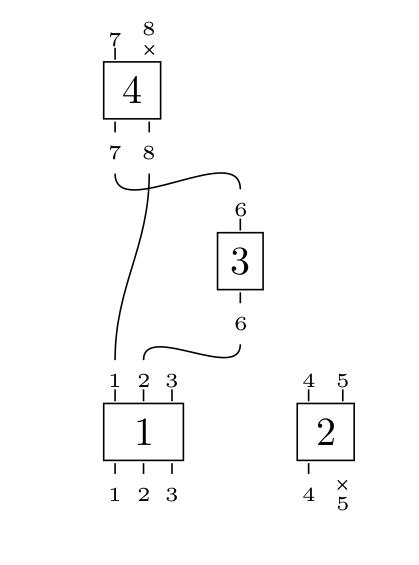}
\caption{The $B$-diagram $(G|G')^\sigma$ \label{PermDiag}}
\end{figure}
\end{example}
%
%

\subsubsection{On the product, coproduct, and Hopf algebra structure}
Let us define the product $\Cup$ on $\mathcal{B}^*$ by setting 
\begin{equation}\label{dualproduct}
D_G\Cup D_{G'}=\sum_{\sigma\in 1\dots n\shuffle n+1\dots n+n'}D_{(G|G')^\sigma}.
\end{equation}
for any $G=(n,\lambda,\varphi,F^\uparrow,F_\downarrow)$ and
$G'=(n',\lambda',\varphi',F'^\uparrow,F'_\downarrow)$.


\begin{example}\rmfamily
Figure \ref{Excprod2} gives an example of product in $\mathcal B^*$.
\begin{figure}[ht]
\centering
\includegraphics[scale=0.27]{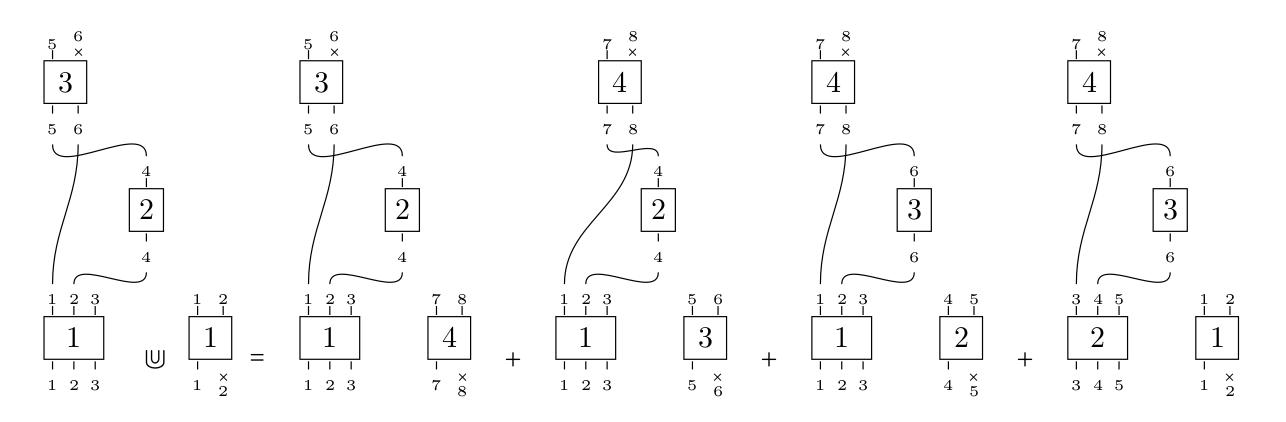}
\caption{An example of product on $\mathcal B^*$ \label{Excprod2}}
\end{figure}
\end{example} 
The product $\Cup$ is associative and commutative.
Indeed, 
{\small
\begin{equation}
D_G\Cup D_{G'}=\sum_{\sigma\in 1\dots n\shuffle n+1\dots n+n'}D_{(G|G')^\sigma}
=\sum_{\sigma\in 1\dots n'\shuffle n'+1\dots n+n'}D_{(G'|G)^\sigma}=D_{G'}\Cup D_{G},
\end{equation}
}
and
\begin{equation}
\begin{array}{rcl}
D_G\Cup (D_{G'}\Cup D_{G''})
&=&\displaystyle
\sum_{\rho\in 1\dots n\shuffle n+1\dots n+n'\shuffle n+n'+1\dots n+n'+n''}D_{(G|G'|G'')^\rho}\\
&=&(D_G\Cup D_{G'})\Cup D_{G''}
\end{array}
\end{equation}
The coproduct on $\mathcal B^*$ is defined by
\begin{equation}
\Delta_*(D_G)=\sum_{G\in G'\medstar G''} D_{G'}\otimes D_{G''}
\end{equation}

\begin{example}\rmfamily
The  coproduct $\Delta_*$ of  $(3,[1,2,2],3\sqcup\sqcup\sqcup\sqcup,\{3,4\},\{1,4,5\})$ is described in Figure \ref{Excomp22}.
\begin{figure}[ht]
\centering
\includegraphics[scale=0.260]{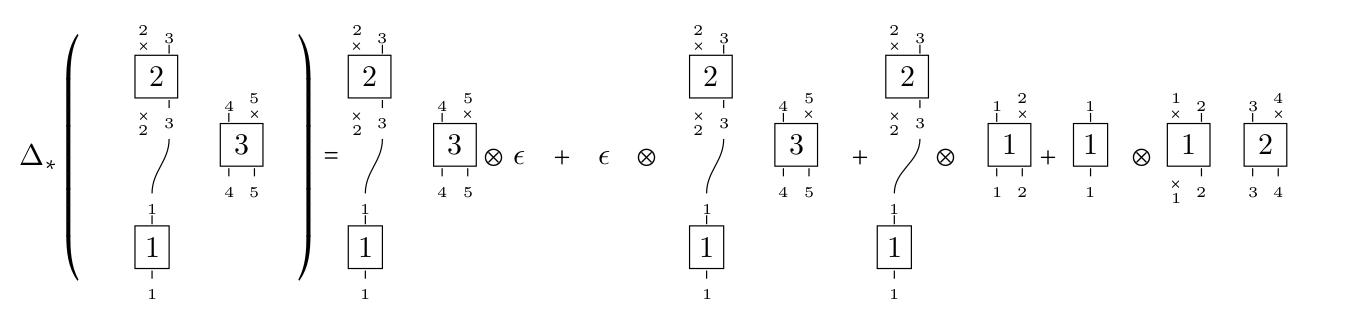}
\caption{An example of coproduct on $\mathcal B^*$ \label{Excomp22}}
\end{figure}

\end{example} 
The co-associativity of the coproduct $\Delta_*$ is due to the associativity of the operation given in formula \eqref{starHW2}.

Obviously, as a graded space, $\mathcal B^*=\bigcup_{n}\mathcal B_n^*$ is isomorphic to the graded dual of $\mathcal B$ because the dimensions of the graded subspaces $\mathcal B_n$ are finite.
The pairing is given by $\big\langle D_G, G' \big\rangle = \delta_{G,G'}$. This pairing naturally extends to a pairing between $\mathcal B^*\otimes\mathcal B^*$ and $\mathcal B\otimes\mathcal B$, i.e. $\langle D_{G_1}\otimes D_{G_2},G'_1\otimes G'_2\rangle=\big\langle D_{G_1}, G'_1 \big\rangle\big\langle D_{G_2}, G'_2 \big\rangle  = \delta_{G_1,G'_1}\delta_{G_2,G'_2}$.
\begin{lemma}\label{DualityPairing}
We have 
\begin{equation}\big\langle D_{G_1} \Cup D_{G_2}, G' \big\rangle= \big\langle D_{G_1} \otimes D_{G_2} , \Delta(G') \big\rangle\end{equation}  and 
\begin{equation}\big\langle \Delta_{*}(D_G), G'_1 \otimes G'_2 \big\rangle = \big\langle D_G, G'_1 \star G'_2 \big\rangle.\end{equation}
\end{lemma}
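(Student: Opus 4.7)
The plan is to unfold both sides of each identity using the definitions of $\Cup$, $\Delta_*$, $\star$, and $\Delta$, and then either establish a bijection (for the first identity) or recognize that both sides compute the same count (for the second identity).

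For the first identity, expanding the left-hand side by the definition of $\Cup$ in \eqref{dualproduct} yields
\begin{equation*}
\langle D_{G_1} \Cup D_{G_2}, G' \rangle
= \#\big\{\sigma \in 1\cdots n \shuffle (n+1)\cdots (n+n') \mid (G_1|G_2)^\sigma = G'\big\},
\end{equation*}
while expanding the right-hand side by \eqref{DeltaDef1} yields
\begin{equation*}
\langle D_{G_1} \otimes D_{G_2}, \Delta(G') \rangle
= \#\big\{I \in \mathrm{Iso}(G') \mid G'[I] = G_1,\ G'[\complement_{G'}I] = G_2\big\}.
\end{equation*}
The plan is to construct a bijection between these two sets. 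To a shuffle $\sigma$ satisfying $(G_1|G_2)^\sigma = G'$, I associate the increasing sequence $I_\sigma = [\sigma(1),\ldots,\sigma(n)]$ (which is indeed increasing because $\sigma$ is a shuffle). By Lemma \ref{DualBDiag}, $G'[I_\sigma] = G_1$ and $G'[\complement_{G'}I_\sigma] = G_2$; the fact that $I_\sigma \in \mathrm{Iso}(G')$ is a consequence of the juxtaposition construction, since $G_1|G_2$ has, by construction, no edge whose endpoints lie in both $\llbracket 1,n\rrbracket$ and $\llbracket n+1,n+n'\rrbracket$, and this property is transported by $\sigma$. Conversely, given an isolated $I = [i_1,\ldots,i_n]$ with $G'[I] = G_1$ and $G'[\complement_{G'}I] = G_2$, the unique shuffle $\sigma_I$ listing the elements of $I$ first and those of $\complement_{G'} I$ next satisfies $(G_1|G_2)^{\sigma_I} = G'$, by reversing the construction in the proof of Lemma \ref{DualBDiag}. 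These two maps are mutually inverse.

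For the second identity, both sides are essentially tautologically equal by the way $\Delta_*$ and $\star$ are defined. Indeed, unfolding the left-hand side,
\begin{equation*}
\langle \Delta_*(D_G), G'_1 \otimes G'_2 \rangle
= \#\{\text{compositions in the family } G'_1 \medstar G'_2 \text{ producing } G\},
\end{equation*}
since $\Delta_*(D_G) = \sum_{G \in G'\medstar G''} D_{G'} \otimes D_{G''}$ counts each occurrence of $G$ in the family $G' \medstar G''$. On the other hand, by \eqref{starHW2}, the coefficient of $G$ in the sum $G'_1 \star G'_2$ is the number of admissible composition data $(k; a_1<\cdots<a_k; b_1,\ldots,b_k)$ producing $G$, which is the same count. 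Thus $\langle D_G, G'_1 \star G'_2 \rangle$ equals the same number.

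The only genuinely substantive step is the verification that, in the first identity, the combinatorial objects on both sides are in bijection; the main obstacle is checking that the shuffle condition on $\sigma$ and the isolation condition on $I$ correspond exactly under the map $\sigma \mapsto I_\sigma$. This reduces to the elementary observation that a sequence $\sigma$ is a shuffle if and only if $I_\sigma$ is an increasing sequence of positions, combined with the fact that the juxtaposition $G_1 | G_2$ produces no edges across the two blocks of vertices, a property preserved by $\sigma$ in view of Lemma \ref{DualBDiag}.
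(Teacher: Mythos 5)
Your proof follows essentially the same route as the paper's: both identities are reduced to counting, the first via the correspondence (through Lemma~\ref{DualBDiag}) between shuffles $\sigma$ with $(G_1|G_2)^\sigma=G'$ and elements of $\mathrm{Split}(G')$ whose sub-diagrams are $G_1$ and $G_2$, and the second by directly unfolding the definitions of $\Delta_*$ and $\star$. The only caveat is that your inverse map should be phrased in terms of $\sigma^{-1}$ rather than $\sigma$ (the word $i_1\cdots i_n j_1\cdots j_{n'}$ is in general \emph{not} a shuffle of $1\cdots n$ and $(n+1)\cdots(n+n')$, whereas the permutation whose inverse is that word is one, and by Lemma~\ref{Relation1} the sequence $[\sigma^{-1}(1),\dots,\sigma^{-1}(n)]$ is automatically increasing); this is a notational slip already present in the statement of Lemma~\ref{DualBDiag} and does not affect the argument.
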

\begin{proof}
    We apply the definition of $\Cup$ and obtain
\begin{equation}\begin{array}{rcl}
\big\langle D_{G_1} \Cup D_{G_2}, G' \big\rangle  &=& \displaystyle\sum_{\sigma \in 1,n \dots, n \shuffle n+1, \dots, n+n'} 
\big\langle D_{(G_1|G_2)^{\sigma}} , G' \big\rangle \\
& =& \# \bigg\{ \sigma \in 1, \dots, n \shuffle n+1, \dots, n+n' \bigg| G' = (G_1|G_2)^{{\sigma} }\bigg\}.\end{array}\end{equation}
But, by Lemma \ref{DualBDiag}, for any $\sigma \in 1 \cdots n \shuffle n+1 \cdots n+n'$, $G'=(G_1|G_2)^\sigma$ implies 
\begin{equation}G'[\sigma(1), \dots, \sigma(n)] = G_1,\quad G'[\sigma(n+1), \dots, \sigma(n+n')] = G_2,\end{equation} and
\begin{equation}([\sigma(1), \dots, \sigma(n)], [\sigma(n+1), \dots, \sigma(n+n')] \in \mathrm{Split}(G')).\end{equation}
Conversely, straightforwardly from the definition of $\mathrm{Split}$, we obtain that, for any pair $(I,J)$ in $\mathrm{Split}(G')$, $(G'[I],G'[J])=(G_1,G_2)$ implies that there exists 
\begin{equation}\sigma\in 1\cdots n\shuffle n+1\cdots n+n'\end{equation}such that $I=[\sigma(1),\cdots,\sigma(n)]$ and $J=[\sigma(n+1),\cdots,\sigma(n+n')]$. It follows that
\begin{equation}\begin{array}{rcl}
\big\langle D_{G_1} \Cup D_{G_2}, G' \big\rangle &=&\#\{
(I,J)\in\mathrm{Split}(G')\mid G'[I]=G_1\mbox{ and }G'[J]=G_2\}\\
&=&\displaystyle\sum_{(I,J) \in \mathrm{Split}(G')} \big\langle D_{G_1} \otimes D_{G_2} , G'[I] \otimes G'[J] \big\rangle \\
& =&  \big\langle D_{G_1} \otimes D_{G_2} , \Delta(G') \big\rangle.
 \end{array}
\end{equation}
This proves the first equality.\\
From the definition of $\Delta_*$, we have also
\begin{equation}\begin{array}{rcl}
\big\langle \Delta_{*}(D_G), G'_1 \otimes G'_2 \big\rangle &=& \displaystyle\sum_{G \in G_1 \medstar G_2} \big\langle D_{G_1} \otimes D_{G_2} ,  G'_1 \otimes G'_2 \big\rangle  \\
&=&\displaystyle \sum_{G \in G_1 \medstar G_2}  \delta_{G_1,G'_1} \delta_{G_2,G'_2} \\
&=& \left\{\begin{array}{cl} 1 &\mbox{ if } G\in G'_1 \medstar G'_2 \\ 0 &\mbox{ otherwise} \end{array}\right.\\
&=& \big\langle D_G, G'_1 \star G'_2 \big\rangle.
\end{array}
\end{equation}
\end{proof}
Classically, Lemma \ref{DualityPairing} implies that the triple $(\mathcal B^*,\Cup, \Delta_*)$ is a bialgebra.

Since the dimensions of the graded components of $\mathcal B^*$ are finite,  the bialgebra  can be endowed with an antipode which confers to it a structure of Hopf algebra. The following statement summarizes the results of the section.
\begin{theorem}
 The triple $(\mathcal B^*, \Cup, \Delta_*)$ is a Hopf algebra that is dual to $(\mathcal B, \star, \Delta)$.
\end{theorem}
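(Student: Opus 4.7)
The strategy is to leverage Lemma~\ref{DualityPairing} in order to transfer the bialgebra axioms from $(\mathcal B,\star,\Delta)$ to $(\mathcal B^*,\Cup,\Delta_*)$ by duality, and then to invoke the graded connected structure to produce the antipode for free. First I would observe that the grading $\omega$ on $B$-diagrams is compatible with both $|$ (and hence $\Cup$, which is a signed sum of $(G|G')^\sigma$'s of weight $\omega(G)+\omega(G')$) and $\medstar$ (whose outputs share the total weight $\omega(G)$ on each tensor factor). Thus $\mathcal B^* = \bigoplus_n \mathcal B_n^*$ is a graded vector space whose graded components are finite-dimensional, and the degree-zero component is spanned by $D_\varepsilon$, so $\mathcal B^*$ is graded and connected.

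Next I would verify the bialgebra axioms for $(\mathcal B^*,\Cup,\Delta_*)$. The unit is $D_\varepsilon$ and the counit $\varepsilon_*$ is the projection $D_G \mapsto \delta_{G,\varepsilon}$; their compatibility with $\Cup$ and $\Delta_*$ follows immediately from the corresponding properties of $\varepsilon$ and $1_{\mathcal B}=\varepsilon$ in $\mathcal B$ via the pairing. Associativity of $\Cup$ and coassociativity of $\Delta_*$ have already been established (the former by the shuffle calculation displayed in the text, the latter as a consequence of associativity of $\medstar$). For the bialgebra compatibility
\begin{equation*}
\Delta_*(D_G \Cup D_{G'}) = (D_{G}\Cup D_{G'})^{(1)}\otimes(D_G\Cup D_{G'})^{(2)},
\end{equation*}
the cleanest route is duality: for any $H_1,H_2\in\mathcal B$,
\begin{equation*}
\langle \Delta_*(D_G\Cup D_{G'}),\, H_1\otimes H_2\rangle = \langle D_G\Cup D_{G'},\, H_1\star H_2\rangle = \langle D_G\otimes D_{G'},\, \Delta(H_1\star H_2)\rangle,
\end{equation*}
and, using that $(\mathcal B,\star,\Delta)$ is already known to be a bialgebra (from the cited result of~\cite{BCL2016}), the last expression coincides with $\langle \Delta_*(D_G)\Cup_{\otimes}\Delta_*(D_{G'}),\,H_1\otimes H_2\rangle$, where $\Cup_\otimes$ denotes the induced product on $\mathcal B^*\otimes\mathcal B^*$. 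Since the pairing is nondegenerate on each graded component (it is essentially the Kronecker delta on the distinguished basis), this gives the desired compatibility.

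Once bialgebra compatibility is established, the existence (and uniqueness) of an antipode $S_*$ is automatic: $(\mathcal B^*,\Cup,\Delta_*)$ is graded and connected with finite-dimensional graded components, so the recursive formula recalled in Subsection~2.2 produces $S_*$, which by duality coincides with the transpose $S^*$ of the antipode $S$ of $\mathcal B$. Finally, the two pairings from Lemma~\ref{DualityPairing} combined with nondegeneracy of $\langle\cdot,\cdot\rangle$ on each graded component furnish the isomorphism $(\mathcal B^*,\Cup,\Delta_*)\simeq(\mathcal B,\star,\Delta)^\star$ of Hopf algebras.

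I expect the only delicate step to be the bialgebra compatibility: one must be confident that the induced product on $\mathcal B^*\otimes\mathcal B^*$ is precisely the one that pairs with $\Delta(H_1\star H_2)$ correctly, and that no subtlety with the ordering of tensor factors (the map $\tau$ in Figure~\ref{fig:bialgebra-diagrams}) is lost. This is a purely formal manipulation but is the only place where the previously established Hopf algebra structure of $\mathcal B$ is actually used; the remaining parts of the theorem are either immediate from the definitions or consequences of the general graded-connected machinery discussed in Section~\ref{sec-background}.
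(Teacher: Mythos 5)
Your proposal is correct and follows essentially the same route as the paper: the paper likewise deduces the bialgebra axioms for $(\mathcal B^*,\Cup,\Delta_*)$ from Lemma~\ref{DualityPairing} together with the (already established) bialgebra structure of $(\mathcal B,\star,\Delta)$ and the nondegenerate graded pairing, and then obtains the antipode from the graded connected finite-dimensional structure; you merely spell out the compatibility computation that the paper dismisses with the word ``classically.'' (One cosmetic slip: $\Cup$ is a plain sum over shuffles, not a \emph{signed} sum.)
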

\subsection{A multiplicative basis\label{sec-multbas}}
In the algebra $\mathcal B$, we define the polynomials
\begin{equation}\begin{array}{ll}
\Phi^G:= G&\mbox{ if }G\in\mathbb G\\
\Phi^{G|G'}:=\Phi^G\star\Phi^{G'}.
\end{array}
\end{equation}
\begin{example}\rmfamily
Let us examine the product  for 
$G_1=(1,[1],\sqcup,\{1\},\{1\})$ and
$G_2=(3,[2,1,1],4\sqcup\sqcup\sqcup,\{4\},\{1,2\})$. In terms of $B$-diagrams, we have
\begin{equation*}
\includegraphics[scale=0.28]{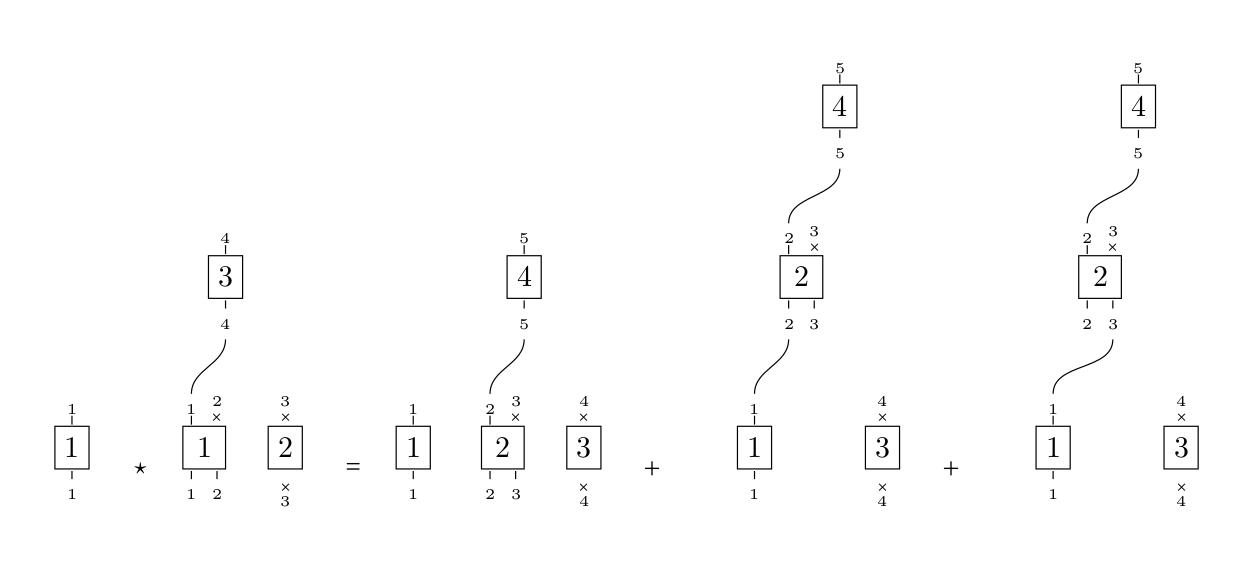}
\end{equation*} 
As a result of this product, we obtain the three $B$-diagrams;
\begin{equation*}\begin{array}{l}
(4,[1,2,1,1],\sqcup5\sqcup\sqcup\sqcup,\{1,5\},\{1,2,3\}),  (4,[1,2,1,1],25\sqcup\sqcup\sqcup,\{5\},\{1,3\})\\ \text{ and } (4,[1,2,1,1],35\sqcup\sqcup\sqcup,\{5\},\{1,2\}).\end{array}
\end{equation*}
\end{example}
We consider the partial order $\preceq$ defined as the transitive closure of $G\preceq G'$, if $G=G_1|G_2$ and $G'\in G_1\medstar G_2$. We denote also \begin{equation}G^\preceq=\{G'\mid G\preceq G'\}=G_1\medstar G_2\medstar \cdots\medstar G_k,\end{equation}
if $G=G_1|G_2|\cdots|G_k$ where $G_1, G_2, \dots ,G_k$ are indivisible $B-$diagrams.
\begin{example}\label{ExPrec}  \rmfamily
Let 
\begin{equation*}\begin{array}{l}G_1=(1,[1],\sqcup,\{1\},\{1\}),\\ 
G_2=(3,[2,1,1],4\sqcup\sqcup\sqcup,\{2\},\{1,2\}),\mbox{ and}\\
G_3=(1,[2],\sqcup\sqcup,\{1,2\},\{1\}).\end{array}\end{equation*} 
We have
\begin{equation*}
\begin{array}{r}
(5,[1,2,1,1,2],\sqcup5\sqcup\sqcup\sqcup\sqcup\sqcup,\{1,3,6,7\},\{1,2,3,6\})^\preceq=(G_1|G_2|G_3)^\preceq\\= \big\{G'_1,G'_2,G'_3,G'_4,G'_5,G'_6,G'_7\big\} \end{array}
\end{equation*}
with
\begin{itemize}
\item $G'_1=G_1|G_2|G_3=(5,[1,2,1,1,2],\sqcup5\sqcup\sqcup\sqcup\sqcup\sqcup,\{1,3,6,7\},\{1,2,3,6\})$
\item $G'_2=\begin{array}{c}G_3\\\comp{1}{1}\\G_1|G_2\end{array}=
\begin{array}{c}G_2|G_3\\\comp{1}{5}\\G_1\end{array}=(5,[1,2,1,1,2],65\sqcup\sqcup\sqcup\sqcup\sqcup,\{3,6,7\},\{1,2,3\})$,
\item $G'_3=G_1|\begin{array}{c}G_3\\\comp{2}{1}\\G_2\end{array}=\begin{array}{c}G_3\\\comp{3}{1}\\ G_1|G_2\end{array}=(5,[1,2,1,1,2],\sqcup 56\sqcup\sqcup\sqcup\sqcup\sqcup,\{1,6,7\},\{1,2,3\}),$
\item $G'_4=\begin{array}{c}G_2\\\comp{1}{1}\\G_1\end{array}|G_3=\begin{array}{c}G_2|G_3\\\comp{1}{1}\\G_1\end{array}=(5,[1,2,1,1,2],25\sqcup\sqcup\sqcup\sqcup\sqcup,\{3,6,5\},\{1,3,6\})$,
\item $G'_5=\begin{array}{c}G_2\\\comp{1}{2}\\G_1\end{array}|G_3=\begin{array}{c}G_2|G_3\\\comp{1}{2}\\G_1\end{array}=(5,[1,2,1,1,2],35\sqcup\sqcup\sqcup\sqcup\sqcup,\{3,6,7\},\{1,2,6\})$,
\item $G'_6=\begin{array}{c}
\left( \begin{array}{c} 
G_3\\\comp 21\\G_2\end{array}\right)\\\comp 11\\G_1
\end{array}=\begin{array}{c}
G_3\\\comp 31\\
\left( \begin{array}{c} 
G_2\\\comp 11\\G_1\end{array}\right)
\end{array}=(5,[1,2,1,1,2],256\sqcup\sqcup\sqcup\sqcup\sqcup,\{6,7\},\{1,3\})$, and
\item $G'_7=\begin{array}{c}
\left( \begin{array}{c} 
G_3\\\comp 21\\G_2\end{array}\right)\\\comp 12\\G_1
\end{array}=\begin{array}{c}
G_3\\\comp 31\\
\left( \begin{array}{c} 
G_2\\\comp 12\\G_1\end{array}\right)
\end{array}=(5,[1,2,1,1,2],356\sqcup\sqcup\sqcup\sqcup\sqcup,\{6,7\},\{1,2\})$.
\end{itemize}
\end{example}
Obviously, we have
\begin{equation}\label{prodPhi}
\Phi^G=\sum_{G\preceq G'}G'=\sum_{G'\in G_1\medstar G_2\medstar\cdots\medstar G_k}G',
\end{equation}
if $G=G_1|G_2|\cdots|G_k$ where $G_1, G_2,\dots, G_k$ are indivisible $B$-diagrams.
The triangular property implies  that the $\Phi^G$'s form a multiplicative basis of $\mathcal B$. Furthermore, we have
\begin{equation}\label{coprodPhi}
\Delta\Phi^G=\sum_{(I,J)\in \mathrm{Split}(G)}\Phi^{G\big[I\big]}\otimes\Phi^{G\big[J\big]}.
\end{equation}
Indeed, if $G$ is indivisible \eqref{coprodPhi} is directly inherited from the formula of $\Delta G$. The general formula is proved by induction as follows
\begin{equation}\begin{array}{rcl}
\Delta\Phi^{G|G'}&=&\Delta\Phi^G\star\Delta\Phi^{G'}\\
&=&\displaystyle \sum_{\genfrac{}{}{0pt}{}{(I,J)\in\mathrm{Split}(G)}{
(I',J')\in\mathrm{Split}(G')}}\Phi^{G[I]}\star\Phi^{G'[I']}\otimes\Phi^{G\big[J\big]}\star\Phi^{G'\big[J'\big]}\\
&=&\displaystyle
\sum_{\genfrac{}{}{0pt}{}{(I,J)\in\mathrm{Split}(G)}{
(I',J')\in\mathrm{Split}(G')}}\Phi^{G[I]|G'[I']}\otimes\Phi^{G\big[J\big]|G'\big[J'\big]}\\
&=&\displaystyle
\sum_{(I,J)\in\mathrm{Split}(G|G')}\Phi^{(G|G')[I]}\otimes\Phi^{(G|G')[J]}
\end{array}
\end{equation}
\begin{example}\rmfamily
Using the notation of Example \ref{ExPrec}.
\begin{equation*}
\Delta\Phi^{G_1|G_2}=\Delta\Phi^{G_1}\star\Delta\Phi^{G_2}
\end{equation*}
with
\begin{equation*}\Delta\Phi^{G_1}=G_1\otimes 1+1\otimes G_1,\end{equation*}
and
\begin{equation*}\Delta\Phi^{G_2}=G_2\otimes 1+G'_2\otimes G''_2+G''_2\otimes G'_2+1\otimes G_2,\end{equation*}
where $G'_2=(1,[1],\sqcup,\emptyset,\emptyset)$ and $G''_2=(2,[2,1],3\sqcup\sqcup,\{2\},\{1,2\})$. So we have
\begin{equation*}\begin{array}{rcl}
\Delta\Phi^{G_1|G_2}&=&G_1\star G_2\otimes 1 +G_1\star G'_2\otimes G''_2+G_1\star G''_2\otimes G'_2+G_1\otimes G_2\\
&&+G_2\otimes G_1+G'_2\otimes G_1\star G''_2+G''_2\otimes G_1\star G'_2+1\otimes G_1\star G_2\\
&=& \Phi^{G_1|G_2}\otimes 1+\Phi^{(G_1|G_2)[1,3]}\otimes \Phi^{(G_1|G_2)[2,4]}\\&&+\Phi^{(G_1|G_2)[1,2,4]}\otimes \Phi^{(G_1|G_2)[3]}+\Phi^{(G_1|G_2)[1]}\otimes \Phi^{(G_1|G_2)[2,3,4]}\\
&&+\Phi^{(G_1|G_2)[2,3,4]}\otimes \Phi^{(G_1|G_2)[1]}+
\Phi^{(G_1|G_2)[3]}\otimes\Phi^{(G_1|G_2)[1,2,4]}
\\&&+\Phi^{(G_1|G_2)[2,4]}\otimes\Phi^{(G_1|G_2)[1,3]}+1\otimes\Phi^{G_1|G_2}.
\end{array}
\end{equation*}
\end{example}
Notice that the freeness of the algebra $\mathcal B$ is more readily apparent when using the basis $\Phi=\{\Phi^G\mid G\in\mathbb B\}$. Indeed, the set $\Phi$ is stable under the product $\star$ and, endowed with this product, is isomorphic to the free monoid over the alphabet $\Sigma_{\mathbb G}=\{\mathtt a_G\mid G\in\mathbb G\}.$  The explicit isomorphism sends $\Phi^G$ to $\mathtt a_G$ for any $G\in\mathbb G$. Hence, as an algebra $\mathcal B$ is isomorphic to $\mathbb C\langle \Sigma_{\mathbb G}\rangle$. 

The dual basis $(\Psi_G)_G$ of $(\Phi_G)_G$, defined by $\langle \Psi_G,\Phi_{G'}\rangle=\delta_{G,G'}$ satisfies
\begin{equation}
\Psi_{G_1}\Cup \Psi_{G_2}=\sum_{G'}\#\bigg\{(I,J)\in\mathrm{Split}(G)\bigg| G'[I]=G_1, G'\big[J\big]=G_2\bigg\}\Psi_{G'}
\end{equation}
and
\begin{equation}
\Delta_\star\Psi_G=\sum_{(G'_1|G'_2)=G}\Psi_{G'_1}\otimes\Psi_{G'_2}.
\end{equation}
\begin{example}\rmfamily
Let $G_1=(1,[1],\sqcup,\{1\},\{1\})$, $G_2=(2,[1,1],\sqcup\sqcup,\{1,2\},\{1,2\})=G_1|G_1$, and $G_3=(2,[1,1],2\sqcup,\{2\},\{1\})=\begin{array}{c}
G_1\\\comp11\\G_1
\end{array}$. The graduation \emph{w.r.t.} the number of the vertices implies $\Psi_{G_1}=D_{G_1}$ and that $\Psi_{G_2}$ and $\Psi_{G_3}$ are linear combinations of $D_{G_1}$ and $D_{G_2}$. Setting $\Psi_{G_2}=\alpha_{2,2}D_{G_2}+\beta_{2,3}D_{G_3}$ and
$\Psi_{G_3}=\alpha_{3,2}D_{G_2}+\beta_{3,3}D_{G_3}$, one obtains
\begin{equation*}
\begin{array}{l}
\langle \Psi_{G_2},\Phi_{G_2}\rangle=\alpha_{2,2}+\alpha_{2,3}=1\\
\langle \Psi_{G_2},\Phi_{G_3}\rangle=\alpha_{2,3}=0\\
\langle \Psi_{G_3},\Phi_{G_2}\rangle=\alpha_{3,2}+\alpha_{3,3}=0\\
\langle \Psi_{G_3},\Phi_{G_3}\rangle=\alpha_{3,3}=1.
\end{array}\end{equation*}
Hence
\begin{equation*}
\Psi_{G_2}=D_{G_2}\mbox{ and }\Psi_{G_3}=D_{G_3}-D_{G_2}.
\end{equation*}
It follows that
\begin{equation*}
\Psi_{G_1}\Cup\Psi_{G_1}=2D_{G_2}=2\Psi_{G_2},
\end{equation*}
\begin{equation*}
\Psi_{G_1}\Cup \Psi_{G_2}=3D_{G_2}=3\Psi_{G_1|G_1|G_1}.
\end{equation*}
In a similar way, one obtains
\begin{equation*}\begin{array}{rcl}
\Psi_{G_1|G_3}&=&D_{G_1|G_3}-D_{G_1|G_1|G_1},\\
\Psi_{G_3|G_1}&=&D_{G_3|G_1}-D_{G_1|G_1|G_1},\\
\Psi_{(3,[1,1,1],3\sqcup\sqcup,\{2,3\},\{1,2\})}&=&D_{(3,[1,1,1],3\sqcup\sqcup,\{2,3\},\{1,2\})}-D_{G_1|G_1|G_1},\\
\Psi_{(3,[1,1,1],12\sqcup,\{3\},\{1\})}&=&
D_{(3,[1,1,1],12\sqcup,\{3\},\{1\})}-
D_{(3,[1,1,1],3\sqcup\sqcup,\{2,3\},\{1,2\})}\\&&-D_{G_1|G_3}-D_{G_3|G_1}+D_{G_1|G_1|G_1},
\end{array}
\end{equation*}
and so on.\\
Let us illustrate the product as follows:
\begin{equation*}\begin{array}{rcl}
\Psi_{G_1}\Cup \Psi_{G_3}&=&D_{G_1}\Cup D_{G_3}- D_{G_1}\cup D_{G_2}\\&=&
D_{G_1|G_3}+D_{(3,[1,1,1],3\sqcup\sqcup,\{2,3\},\{1,2\})}+D_{G_3|G_1}-3D_{G_1|G_1|G_1}\\
&=&\Psi_{G_1|G_3}+\Psi_{G_3|G_1}
+\Psi_{(3,[1,1,1],3\sqcup\sqcup,\{2,3\},\{1,2\})}.
\end{array}
\end{equation*}
We illustrate the coproduct by the following example:
\begin{equation*}
\Delta_\star\Psi_{G_3}=\Delta_\star D_{G_3}-\Delta_\star D_{G_2}.
\end{equation*}
Recall that
\begin{equation*}
\Delta_\star D_{G_2}=D_{G_2}\otimes 1+D_{G_1}\otimes D_{G_1}+1\otimes D_{G_2}
\end{equation*}
and
\begin{equation*} 
\Delta_\star D_{G_3}=D_{G_3}\otimes 1+D_{G_1}\otimes D_{G_1}+1\otimes D_{G_3}
\end{equation*}
because $G_2=G_1|G_1$ and $G_3=\begin{array}{c} G_1\\\comp11\\G_1\end{array}.$
Hence
\begin{equation*}
\Delta_\star\Psi_{G_3}=\Delta_\star\Psi_{G_3}\otimes 1+1\otimes \Psi_{G_3}.
\end{equation*}
\end{example}


\def\colset{\mathrm{colset}}
\section{From B-diagrams to colored set partitions}\label{sec-diagtopart}
\subsection{Word symmetric functions\label{subsec-wsym}}
In \cite{RS}, \emph{Rosas} and \emph{Sagan} defined the Hopf algebra of word symmetric functions in non-commutative variables as the subalgebra of $\mathbb{C}\big\langle\mathbb{A}\big\rangle $ constituted by polynomials which are invariant by permutation of the letters of the alphabet. This algebra was first studied
by \emph{M. C. Wolf} \cite{Wolf} in 1936. We call it $\WSym$ as in \cite{HNT2008}. It is generated by the basis $(M_\pi)$ ($\pi$ is a set partition) with the following product: 
\begin{equation}\label{ProdMWSym}
M_\pi M_{\pi'}=\sum_{\pi''}M_{\pi''},    
\end{equation}
where $\pi=\{\pi_1,\dots,\pi_k\}$ and $\pi^{'}=\{\pi'_1,\dots,\pi'_{k'}\}$ are two set partitions, respectively of $\{1,2, \dots, n\}$ and $\{1,2, \dots, n'\}$, and the sum is over the partitions $\pi''=\{\pi''_1,\dots,\pi''_{k''}\}$ of $\{1,\dots,n+n'\}$ such that for any $i\in\{1,\dots,k''\}$,  $\pi''_i\cap \{1,\dots,n\}\in \pi'$ and if $\pi''_i\cap \{n+1,\dots,n+n'\}=\{j_1,\dots,j_\alpha\}$ then $\{j_1-n,\dots,j_\alpha-n\}\in\pi'$.
$\WSym$ is endowed with the coproduct defined by 
\begin{align}\label{CoProdMWSym}
\Delta(M_{\pi}) = \sum M_{\std(\pi')} \otimes M_{\std(\pi'')}.
\end{align}
The sum is over the pairs $(\pi',\pi'')$ such that $\pi' \cup \pi'' = \pi$, $\pi' \cap \pi'' = \emptyset$ and the standardized, $\std(\pi)$, of a partition $\pi$ is obtained by replacing the $i^{th}$ smallest element by $i$. This confers to $\WSym$ a structure of Hopf algebra (see e.g. \cite{HNT2008}).

\begin{example}\label{ExMpi}\rmfamily
For instance, consider the following computations:
\begin{equation*} \begin{array}{r}M_{\{\{1,3\},\{2\}\}} M_{\{\{1,4\},\{2,3\}\}} = M_{\{\{1,3\},\{2\},\{4,7\},\{5,6\}\}}+ M_{\{\{1,3,4,7\},\{2\},\{5,6\}\}}\\
+ M_{\{\{1,3\},\{2,4,7\},\{5,6\}\}}
+
M_{\{\{1,3,5,6\},\{2\},\{4,7\}\}}
+M_{\{\{1,3\},\{2,5,6\},\{4,7\}\}}\\
+
M_{\{\{1,3,4,7\},\{2,5,6\}\}} +
M_{\{\{1,3,5,6\},\{2,4,7\}\}},
\end{array}
\end{equation*}
and
\begin{equation*}\begin{array}{rcl}
\Delta M_{\{\{1,3,5,6\},\{2,4,7\}\}}&=&M_{\{\{1,3,5,6\},\{2,4,7\}\}}\otimes 1
+M_{\{\{1,2,3,4\}\}}\otimes M_{\{\{1,2,3\}\}}\\ &&+M_{\{\{1,2,3\}\}}\otimes M_{\{\{1,2,3,4\}\}}+1\otimes M_{\{\{1,3,5,6\},\{2,4,7\}\}}
\end{array}\end{equation*}
\end{example}

For any  set partition $\pi=\{\pi_1,\dots,\pi_k\}$, we define
\begin{itemize}
\item $|\pi|=\sum_{i=1}^k\#\pi_i$,
\item $\varphi_\pi:\{1,\dots,|\pi|\} \rightarrow \{1,\dots,|\pi|\}\cup\{\sqcup\}$ such that
\begin{equation}
\varphi_\pi(i)=\left\{
\begin{array}{ll}
i'&\mbox{ if there exists } j \mbox{ such that }i\in\pi_j,\mbox{ and }i'=\min\{i''\in \pi_j\mid i<i'\},\\
\sqcup&\mbox{ otherwise,}
\end{array}
\right.
\end{equation}
\item $\min_\pi=\{\min(\pi_j)\mid j\in\{1,\dots,k\}\}$,
\item  and $\max_\pi=\{\max(\pi_j)\mid j\in\{1,\dots,k\}\}$.\end{itemize}
We associate to any set partition $\pi=\{\pi_1,\cdots,\pi_k\}$ the $B$-diagram
\begin{equation}G_\pi=(|\pi|,[\overbrace{1,\cdots,1}^{\times|\pi|}],\varphi_\pi,\max_\pi,\min_\pi).\end{equation}
If $\pi$ and $\pi'$ are two set partitions, we define \begin{equation}\pi\uplus\pi'=\pi\cup\{\{i_1+|\pi|,\dots, i_\ell+|\pi|\}\mid \{i_1,\dots,i_\ell\}\in\pi'\}\end{equation}
Obviously one has
\begin{align}\label{ProdJuxtGWSym}
G_{\pi\uplus\pi'}=G_\pi|G_{\pi'}.
\end{align}
Conversely if $G_\pi$ is indivisible then $\pi$ cannot be written under the form $\pi=\pi'\uplus\pi''$ with $\emptyset\not\in\{\pi',\pi''\}$.
\begin{example}\rmfamily
If $\pi=\{\{1,4\},\{2,3\},\{5\}\}$ we have $|\pi|=5$, $\varphi_\pi=43\sqcup\sqcup\sqcup$, $\min_\pi=\{1,2,5\}$, $\max_\pi=\{3,4,5\}$. \\
We remark that 
\begin{equation*}
\begin{array}{rcl}
G_\pi&=&(5,[1,1,1,1,1],43\sqcup\sqcup\sqcup,\{3,4,5\},\{1,2,5\})\\&=&(4,[1,1,1,1],43\sqcup\sqcup,\{1,2\},\{3,4\})|(1,[1],\sqcup,\{1\},\{1\})\\&=&G_{\{\{1,4\},\{2,3\}\}} | G_{\{\{1\}\}}.
\end{array}
\end{equation*} In contrast, the diagram
\begin{equation*}
G_{\{\{1,5\},\{2,4\},\{3\}\}}=(5,[1,1,1,1,1],54\sqcup\sqcup\sqcup,\{3,4,5\},\{1,2\})
\end{equation*}
is clearly indivisible.
\end{example}
The subalgebra of $\mathcal B$ generated by the $B$-diagrams $G_\pi$ is isomorphic to $\WSym$.
\begin{example}\rmfamily
Compare the examples below to those given in Example \ref{ExMpi} :
{\small
\begin{equation*}\begin{array}{l}
G_{\{\{1,3\}\{2\}\}}\star G_{\{\{1,4\},\{2,3\}\}}=\\
(3,[1,1,1],3\sqcup\sqcup,\{2,3\},\{1,2\})\star (4,[1,1,1,1],43\sqcup\sqcup,\{3,4\},\{1,2\})=\\
(7,[1^7],3\sqcup\sqcup76\sqcup\sqcup,\{2,3,6,7\},\{1,2,4,5\})+
(7,[1^7],3\sqcup476\sqcup\sqcup,\{2,6,7\},\{1,2,5\})\\
+(7,[1^7],34\sqcup76\sqcup\sqcup,\{3,6,7\},\{1,2,5\})
+(7,[1^7],3\sqcup576\sqcup\sqcup,\{2,6,7\},\{1,2,4\})\\
+(7,[1^7],35\sqcup76\sqcup\sqcup,\{3,6,7\},\{1,2,4\})
+(7,[1^7],35476\sqcup\sqcup,\{6,7\},\{1,2\})\\
+(7,[1^7],34576\sqcup\sqcup,\{6,7\},\{1,2\})=\\
G_{\{\{1,3\},\{2\},\{4,7\},\{5,6\}\}} + G_{\{\{1,3,4,7\},\{2\},\{5,6\}\}}
+ G_{\{\{1,3\},\{2,4,7\},\{5,6\}\}}
+
G_{\{\{1,3,5,6\},\{2\},\{4,7\}\}}\\
+G_{\{\{1,3\},\{2,5,6\},\{4,7\}\}}
+
G_{\{\{1,3,4,7\},\{2,5,6\}\}}
 +
G_{\{\{1,3,5,6\},\{2,4,7\}\}},
\end{array}
\end{equation*}
}
and
\begin{equation*}\begin{array}{rcl}
\Delta G_{\{\{1,3,5,6\},\{2,4,7\}\}}&=&\Delta (7,[1^7],34576\sqcup\sqcup,\{6,7\},\{1,2\})\\
&=& (7,[1^7],34576\sqcup\sqcup,\{6,7\},\{1,2\})\otimes 1\\
&&+ (4,[1^4],234\sqcup,\{4\},\{1\})
\otimes (3,[1^3],23\sqcup,\{3\},\{1\})\\
&&+(3,[1^3],23\sqcup,\{3\},\{1\})\otimes (4,[1^4],234\sqcup,\{4\},\{1\})\\
&&+1\otimes (7,[1^7],34576\sqcup\sqcup,\{6,7\},\{1,2\})\\
&=&
G_{\{\{1,3,5,6\},\{2,4,7\}\}}\otimes 1
+G_{\{\{1,2,3,4\}\}}\otimes G_{\{\{1,2,3\}\}}\\ &&+G_{\{\{1,2,3\}\}}\otimes G_{\{\{1,2,3,4\}\}}+1\otimes G_{\{\{1,3,5,6\},\{2,4,7\}\}}.
\end{array}
\end{equation*}
\end{example}
The $M_\pi$ basis is not the most convenient one to generalize to other types of $B$-diagrams. Instead, we use the basis given, for any $\pi=\{\pi_1,\dots,\pi_k\}$ by
\begin{equation}\label{ProdWsymPhiToM}
\Phi^\pi=\sum_{\pi'}M_{\pi'}    
\end{equation}
where the sum is over the partitions $\pi'=\{\pi'_1,\dots,\pi'_\ell\}$ satisfying $\pi_i\cap\pi'_j\in\{\pi_i,\emptyset\}$ for any $(i,j)\in\{1,\dots,k\}\times\{1,\dots,\ell\}$. In other words, the part of each $\pi'$ is an union of parts of $\pi$.\\
We also note that the algebra $\mathcal B_W$ is also generated by the elements $\Phi^{G_\pi}$ and the explicit isomorphism sends 
$\Phi^{G_\pi}$ to $\Phi^{\pi}$.\\
By duality, there is an isomorphism $\PiQSym\rightarrow \mathcal B^*/_J$, where $J$ is the ideal generated by the elements $\Phi^G$ for any $G$ that cannot be written as a $G_\pi$, sending each $\Psi_\pi$ to the class of $\Psi_{G_\pi}$. In other words, the algebra $\PiQSym$ is isomorphic to the subalgebra of $\mathcal B^*$ generated by the elements $\Psi_{G_\pi}$. This is a special case of proposition \ref{prop_subdual}.

\subsection{Hopf algebras of colored partitions\label{subsec-colorpart}}
Let $a=(a_m)_{m\geq 1}$ be a sequence of integers. Following \cite{ABCLM}, we consider the set $\mathcal {CP}(a)$  of the  \emph{colored partitions} $\Pi=\{[\pi_1,i_1],\dots,[\pi_k,i_k]\}$ such that $\{\pi_1,\dots,\pi_k\}$ is a set partition and each $i_\ell\in\llbracket 1,a_{\#\pi_{\ell}}\rrbracket$ for $1\leq\ell\leq k$. For a colored set partition $\Pi=\{[\pi_1,i_1],\dots,[\pi_k,i_k]\}$, we set $|\Pi|=\sum_{i=1}^k\#\pi_i$. This number is called \emph{size} of $\Pi$. We denote by $\mathcal{CP}_n(a)$ the set of colored partitions of size $n$ associated with the sequence $a$. We also set $\mathcal{CP}(a)=\bigcup_n\mathcal{CP}_n(a)$. We endow $\mathcal{CP}$ with the additional statistic $\#\Pi$ and set $\mathcal{CP}_{n,k}(a)=\big\{\Pi\in\mathcal{CP}_{n}(a)\mid\#\Pi=k\big\}.$
 The \emph{standardized} $\std(\Pi)$ of $\Pi$ is defined as the unique colored set partition obtained by replacing the $i$th smallest integer in  $\pi_j$ by $i$. Let $\Cup$: $\mathcal{CP}_{n,k}\otimes\mathcal{CP}_{n',k'}\longrightarrow \mathcal P(\mathcal{CP}_{n+n',k+k'})$ be defined by
\begin{equation}
\Pi\Cup\Pi'=\{\hat\Pi \cup \hat\Pi' \in\mathcal{CP}_{n+n',k+k'}(  a)\mid \std(\hat\Pi)=\Pi\mbox{ and }
\std(\hat\Pi')=\Pi'\}.
\end{equation}
For any triple $(\Pi,\Pi',\Pi'')$, we denote by $\alpha_{\Pi',\Pi''}^{\Pi}$ the number of pairs of disjoint subsets $(\hat \Pi'$, $\hat\Pi'')$ of $\Pi$ such that $\hat\Pi'\cup\hat\Pi''=\Pi$, $\std(\hat\Pi')=\Pi'$ and $\std(\hat\Pi'')=\Pi''$.
We also define
\begin{equation}
\Pi\uplus\Pi'=\Pi\cup\{\{i_1+|\Pi|,\dots,i_\ell+|\Pi|\}\mid \{i_1,\dots,i_\ell\}\in\Pi'\}.
\end{equation}
The algebra of colored set partitions is defined in \cite{ABCLM}. We recall here its definitions: the bases of the space $\CWSym(a)$ are indexed by colored set partitions.\\
The Hopf algebra of colored set partitions $\CWSym(a)$ is generated by the formal elements $\Phi^\Pi$ for the shifted concatenation product 
\begin{equation}
\Phi^\Pi \Phi^{\Pi'} = \Phi^{\Pi\uplus\Pi'}.
\end{equation}
where $\Pi$ and $\Pi'$ are two colored set partitions of $\{1,\ldots,n\}$ and $\{1,\ldots,n'\}$, respectively. Its coproduct is given by
\begin{equation}
\Delta(\Phi^\Pi) = \sum\limits_{\substack{\hat\Pi_1\cup\hat\Pi_2\Pi\\ \hat\Pi_1\cap\hat\Pi_2=\emptyset}}\Phi^{\std(\hat\Pi_1)}\otimes\Phi^{\std(\hat\Pi_2)}=\sum_{\Pi_1,\Pi_2}\alpha_{\Pi_1,\Pi_2}^{\Pi}\Phi^{\Pi_1}\otimes\Phi^{\Pi_2}.
\end{equation}
A colored set partition $\Pi$ is said \emph{indivisible} if $\Pi=\Pi_1\uplus\Pi_2$ implies $\Pi_1=\Pi$ or $\Pi_2=\Pi$. We denote by $\mathcal{ICP}(a)$ the set of indivisible colored partitions.

The algebra $\CWSym(a)$ is freely generated by $\{\Phi^\Pi\mid \Pi\in\mathcal{ICP}(a)\}$  because the set $\{\Phi^\Pi\mid \Pi\in\mathcal{CP}(a)\}$ endowed with the shifted concatenation is isomorphic to the free monoid over the (formal) alphabet $\Sigma_{\mathcal{ICP}}(a)=\{\mathtt{a}_\Pi\mid \Pi\in\mathcal{ICP}(a)\}$. Moreover, as a bigebra, $\mathbf{CWSym}(a)$ is isomorphic to $(\mathbb C\langle \Sigma_{\mathcal{ICP}}(a)\rangle,\cdot,\Delta_{\Cup}$) where $\cdot$ denotes the concatenation product and $\Delta_\Cup$ is the unique coproduct compatible with the concatenation and satisfying :
\begin{equation}
\Delta_\Cup(\mathtt a_\Pi)=\sum_{\Pi_1,\Pi_2}\alpha_{\Pi_1,\Pi_2}^\Pi\iota(\Phi^{\Pi_1})\otimes \iota(\Phi^{\Pi_2}),
\end{equation}
where $\iota$ is the unique isomorphism (of algebra) from $\CWSym(a)$ to $\mathbb C\langle \Sigma_{\mathcal{ICP}(a)}\rangle$ sending
each $\Phi^\Pi$ to $\mathtt a_\Pi$ for each $\Pi\in\mathcal{ICP}(a)$. This is a special case of the construction described in Section \ref{App_HopfPol}.

The Hopf algebra $\mathrm{C\Pi QSym}(a)$ is formally defined in \cite{ABCLM} as the space spanned by the set $\{\Psi_\Pi\mid \Pi\in\mathcal {CP}(a)\}$ endowed with the product
\begin{equation}\label{Psiprod}
\Psi_{\Pi'}\Psi_{\Pi''}=\sum_{\Pi\in\Pi'\Cup\Pi''}\alpha_{\Pi',\Pi''}^\Pi\Psi_\Pi.
\end{equation} 
Its coproduct is given by
\begin{equation}\label{PsiCo-prod}
\Delta(\Psi_{\Pi})=\sum_{\Pi'\uplus\Pi''=\Pi}\Psi_{\Pi'}\otimes\Psi_{\Pi''}.
\end{equation}

\begin{example}\rmfamily
For instance, consider the following products in $\mathrm{C}\Pi\mathrm{QSym}(a)$, for some $a=(a_i)_{i\in\mathbb N}$ with $a_1>0$ and $a_2>2$ : 
{\small
\begin{equation*} \begin{array}{l}\Psi_{\{[\{1,2\},3]\}} \Psi_{\{[\{1\},1],[\{2,3\},2]\}} = 
\Psi_{\{[\{1,2\},3],[\{3\},1],[\{4,5\},2]\}}\\
+ \Psi_{\{[\{1,3\},3],[\{2\},1],[\{4,5\},2]\}} +\Psi_{\{[\{1,4\},3],[\{2\},1],[\{3,5\},2]\}}+\Psi_{\{[\{1,5\},3],[\{2\},1],[\{3,4\},2]\}}\\
+ \Psi_{\{[\{2,3\},3],[\{1\},1],[\{4,5\},2]\}}+ \Psi_{\{[\{2,4\},3],[\{1\},1],[\{3,5\},2]\}}+\Psi_{\{[\{2,5\},3],[\{1\},1],[\{3,4\},2]\}}\\
+\Psi_{\{[\{3,4\},3],[\{1\},1],[\{2,5\},2]\}}+
\Psi_{\{[\{3,5\},3],[\{1\},1],[\{2,4\},2]\}}+\Psi_{\{[\{4,5\},3],[\{1\},1],[\{2,3\},2]\}}.
\end{array}
\end{equation*}
\begin{equation*} \begin{array}{l}\Psi_{\{[\{1,2\},2]\}} \Psi_{\{[\{1\},1],[\{2,3\},2]\}} = 
\Psi_{\{[\{1,2\},2],[\{3\},1],[\{4,5\},2]\}} + \Psi_{\{[\{1,3\},2],[\{2\},1],[\{4,5\},2]\}}\\
 +\Psi_{\{[\{1,4\},2],[\{2\},1],[\{3,5\},2]\}} +\Psi_{\{[\{1,5\},2],[\{2\},1],[\{3,4\},2]\}}+ 2\Psi_{\{[\{2,3\},2],[\{1\},1],[\{4,5\},2]\}}\\+
2\Psi_{\{[\{2,4\},2],[\{1\},1],[\{3,5\},2]\}}
+2\Psi_{\{[\{2,5\},2],[\{1\},1],[\{3,4\},2]\}}.
\end{array}
\end{equation*}}
\begin{equation*}
\begin{array}{l}
\Delta \Psi_{\{[\{1,3\},1],[\{2,4\},2],[\{5\},1]\}}=
\Psi_{\{[\{1,3\},1],[\{2,4\},2],[\{5\},1]\}}\otimes 1\\ + \Psi_{\{[\{1,3\},1],[\{2,4\},2]\}}\otimes \Psi_{\{[\{1\},1]\}}
+1\otimes \Psi_{\{[\{1,3\},1],[\{2,4\},2],[\{5\},1]\}}.
\end{array}
\end{equation*}
\end{example}
 The Hopf algebra $\mathrm{C\PiQSym}(a)$ is the graded dual Hopf algebra of $\mathbf{CWSym}(a)$. As a special case of   Section \ref{App_HopfPol}, we show that  $\mathrm{C\PiQSym}(a)$ is isomorphic to the Hopf algebra $(\mathbb C\langle \Sigma_{\mathcal{ICP}}(a)\rangle, \Cup, \Delta_\cdot)$, where $\Cup$ is defined inductively by: 
\begin{equation}\label{eq-CupCPiQ}
u_1\Cup u_2=\left\{
\begin{array}{l}
1\quad\mbox{ if }u_1=u_2=1\\\displaystyle
\sum_{\genfrac{}{}{0pt}{}{u_1=p_1v_1, u_2=p_2v_2}{
p_1\neq 1\ \mathrm{or}\ p_2\neq 1}}\sum_{\Pi\in \mathcal{ICP}(a)}(p_1\otimes p_2,\Delta_{\Cup}(\mathtt a_\Pi))\mathtt a_\Pi\cdot (v_1\Cup v_2)\mbox{ otherwise}.
\end{array}
\right.
\end{equation}
\begin{example}
Let us examine the product
\begin{equation*}\label{eq-acupaa}
\ta_{\{[\{1,2\},3]\}}\Cup \big(\ta_{\{[\{1\},1]\}}\ta_{\{[\{1,2\},2]\}}\big).
\end{equation*}
The sum on the right-hand side of \eqref{eq-CupCPiQ} decomposes according to the possible values of $p_1 \otimes p_2$ listed below:
\begin{enumerate}
\item $p_1\otimes p_2=1\otimes \ta_{\{[\{1\},1]\}}$: The only partition $\Pi \in \mathcal{ICP}(a)$ for which 
\begin{equation*}(p_1 \otimes p_2, \Delta_{\Cup}(\ta_\Pi)) \neq 0\end{equation*}
is $\Pi = \{[\{1\},1]\}$, and we have 
$(1 \otimes \ta_{\{[\{1\},1]\}},\, \Delta_{\Cup}(\ta_{\{[\{1\},1]\}})) = 1$.
Consequently, this case contributes
\begin{equation*}
\ta_{\{[\{1\},1]\}}\!\big(\ta_{\{[\{1,2\},3]\}} \Cup \ta_{\{[\{1,2\},2]\}}\big).
\end{equation*} We thus apply the computation inductively to 
$\ta_{\{[\{1,2\},3]\}} \Cup \ta_{\{[\{1,2\},2]\}}$ and obtain
{\small
\begin{equation*}
\begin{array}{l}
\ta_{\{[\{1,2\},3]\}}\Cup \ta_{\{[\{1,2\},2]\}}= \\\big(1\otimes\ta_{\{[\{1,2\},2]\}},\Delta_{\Cup}(\ta_{\{[\{1,2\},2]\}})\big)\ta_{\{[\{1,2\},2]\}}\ta_{\{[\{1,2\},3]\}}+\\
\big(\ta_{\{[\{1,2\},3]\}\otimes 1},\Delta_{\Cup}(\ta_{\{[\{1,2\},3]\}})\big)\ta_{\{[\{1,2\},3]\}}\ta_{\{[\{1,2\},2]\}}+\\
\big(\ta_{\{[\{1,2\},3]\}}\otimes \ta_{\{[\{1,2\},2]\}},\Delta_{\Cup}(\ta_{\{[\{1,3\},3],[\{2,4\},2]\}})\big)\ta_{\{[\{1,3\},3],[\{2,4\},2]\}}+\\
\big(\ta_{\{[\{1,2\},3]\}}\otimes \ta_{\{[\{1,2\},2]\}},\Delta_{\Cup}(\ta_{\{[\{1,4\},3],[\{2,3\},2]\}})\big)\ta_{\{[\{1,4\},3],[\{2,3\},2]\}}+\\
\big(\ta_{\{[\{1,2\},3]\}}\otimes \ta_{\{[\{1,2\},2]\}},\Delta_{\Cup}(\ta_{\{[\{2,3\},3],[\{1,4\},2]\}})\big)\ta_{\{[\{2,3\},3],[\{1,4\},2]\}}+\\
\big(\ta_{\{[\{1,2\},3]\}}\otimes \ta_{\{[\{1,2\},2]\}},\Delta_{\Cup}(\ta_{\{[\{2,4\},3],[\{1,3\},2]\}})\big)\ta_{\{[\{2,4\},3],[\{1,3\},2]\}}\\
=
\ta_{\{[\{1,2\},2]\}}\ta_{\{[\{1,2\},3]\}}+\ta_{\{[\{1,2\},3]\}}\ta_{\{[\{1,2\},2]\}}+
\ta_{\{[\{1,3\},3],[\{2,4\},2]\}}+\\
\ta_{\{[\{1,4\},3],[\{2,3\},2]\}}+
\ta_{\{[\{2,3\},3],[\{1,4\},2]\}}+\ta_{\{[\{2,4\},3],[\{1,3\},2]\}}.
\end{array}
\end{equation*}
}
\item $p_1\otimes p_2=1\otimes\ta_{\{[\{1\},1]\}}\ta_{\{[\{1,2\},2]\}}$ : The contribution of this case to the sum is zero, since $\ta_{\{[\{1\},1]\}}\ta_{\{[\{1,2\},2]\}}$ is not indecomposable.
\item $p_1\otimes p_2=\ta_{\{[\{1,2\},3]\}}\otimes 1$ :The only partition $\Pi \in \mathcal{ICP}(a)$ for which 
$(p_1 \otimes p_2, \Delta_{\Cup}(a_\Pi)) \neq 0$ 
is $\Pi = \{[\{1,2\},3]\}$, and in this case we have 
$(a_{\{[\{1,2\},3]\}} \otimes 1,\, \Delta_{\Cup}(a_{\{[\{1,2\},3]\}})) = 1$.

Therefore, this case contributes
$
a_{\{[\{1,2\},3]\}}\, a_{\{[\{1\},1]\}}\, a_{\{[\{1,2\},2]\}}$.
\item $p_1\otimes p_2=\ta_{[\{1,2\},3]}\otimes \ta_{[\{1\},1]}$: The only partition $\Pi \in \mathcal{ICP}(a)$ for which 
$(p_1 \otimes p_2, \Delta_{\Cup}(\ta_\Pi)) \neq 0$
is $\Pi = \{[\{1,3\},3], [\{2\},1]\}$, and in this case we have
$
\bigl(\ta_{\{[\{1,2\},3]\}} \otimes \ta_{\{[\{1\},1]\}},\,
\Delta_{\Cup}(\ta_{\{[\{1,3\},3],[\{2\},1]\}})\bigr) = 1$.\\
Hence, this case contributes
$
\ta_{\{[\{1,3\},3],[\{2\},1]\}}\,
\ta_{\{[\{1,2\},2]\}}$.

\item $p_1\otimes p_2=\ta_{[\{1,2\},3]}\otimes \ta_{[\{1\},1]}\ta_{[\{1,2\},2]}$: There are twelve partitions $\Pi$ in the set $\mathcal{ICP}(a)$ for which 
$(p_1 \otimes p_2,\Delta_{\Cup}(\ta_\Pi)) \neq 0$. 
They are listed below:
{\small
\begin{equation*}
\begin{array}{cc}
\{[\{1,3\},3],[\{4\},1],[\{2,5\},2]\}& \{[\{1,4\},3],[\{2\},1],[\{3,5\},2]\}\\
 \{[\{1,4\},3],[\{3\},1],[\{2,5\},2]\}& \{[\{1,5\},3],[\{2\},1],[\{3,4\},2]\}\\
\{[\{1,5\},3],[\{3\},1],[\{2,4\},2]\} &\{[\{1,5\},3],[\{4\},1],[\{2,3\},2]\}\\
\{[\{2,3\},3],[\{4\},1],[\{1,5\},2]\}&\{[\{2,4\},3],[\{3\},1],[\{1,5\},2]\}\\
\{[\{2,5\},3],[\{3\},1],[\{1,4\},2]\}&\{[\{2,5\},3],[\{4\},1],[\{1,3\},2]\}\\
\{[\{3,4\},3],[\{2\},1],[\{1,5\},2]\}&\{[\{3,5\},3],[\{2\},1],[\{1,4\},2]\}.
\end{array}
\end{equation*}}
Each of these partitions $\Pi$ satisfies $(p_1\otimes p_2,\Delta_\Cup(\ta_\Pi))= 1$. Hence, the constribution of this case is
{
\begin{equation*}
\begin{array}{l}
\ta_{\{[\{1,3\},3],[\{4\},1],[\{2,5\},2]\}}+\ta_{\{[\{1,4\},3],[\{2\},1],[\{3,5\},2]\}}+\\\ta_{\{[\{1,4\},3],[\{3\},1],[\{2,5\},2]\}}+ \ta_{\{[\{1,5\},3],[\{2\},1],[\{3,4\},2]\}}+\\\ta_{
\{[\{1,5\},3],[\{3\},1],[\{2,4\},2]\}}+\ta_{\{[\{1,5\},3],[\{4\},1],[\{2,3\},2]\}}+\\
\ta_{\{[\{2,3\},3],[\{4\},1],[\{1,5\},2]\}}+\ta_{\{[\{2,4\},3],[\{3\},1],[\{1,5\},2]\}}+\\\ta_{
\{[\{2,5\},3],[\{3\},1],[\{1,4\},2]\}}+
\ta_{\{[\{2,5\},3],[\{4\},1],[\{1,3\},2]\}}
+\\
\ta_{\{[\{3,4\},3],[\{2\},1],[\{1,5\},2]\}}+\ta_{\{[\{3,5\},3],[\{2\},1],[\{1,4\},2]\}}.
\end{array}
\end{equation*}
}
Hence there are $20$ elements in the expansion of
\eqref{eq-acupaa}  :
{
\begin{equation*}
\begin{array}{l}
\ta_{\{[\{1\},1]\}}\ta_{\{[\{1,2\},2]\}}\ta_{\{[\{1,2\},3]\}}+\ta_{\{[\{1\},1]\}}\ta_{\{[\{1,2\},3]\}}\ta_{\{[\{1,2\},2]\}}+\\
\ta_{\{[\{1\},1]\}}\ta_{\{[\{1,3\},3],[\{2,4\},2]\}}+\ta_{\{[\{1\},1]\}}\ta_{\{[\{1,4\},3],[\{2,3\},2]\}}+\\
\ta_{\{[\{1\},1]\}}\ta_{\{[\{2,3\},3],[\{1,4\},2]\}}+\ta_{\{[\{1\},1]\}}\ta_{\{[\{2,4\},3],[\{1,3\},2]\}}+\\
\ta_{\{[\{1,2\},3]\}}\ta_{\{[\{1\},1]\}}\ta_{\{[\{1,2\},2]\}}+\ta_{\{[\{1,3\},3],[\{2\},1]\}}\ta_{\{[\{1,2\},2]\}}
+\\
\ta_{\{[\{1,3\},3],[\{4\},1],[\{2,5\},2]\}}+  \ta_{\{[\{1,4\},3],[\{2\},1],[\{3,5\},2]\}}+ \\ \ta_{\{[\{1,4\},3],[\{3\},1],[\{2,5\},2]\}}+\ta_{\{[\{1,5\},3],[\{2\},1],[\{3,4\},2]\}}+ \\ \ta_{
\{[\{1,5\},3],[\{3\},1],[\{2,4\},2]\}}+\ta_{\{[\{1,5\},3],[\{4\},1],[\{2,3\},2]\}}+\\
\ta_{\{[\{2,3\},3],[\{4\},1],[\{1,5\},2]\}}
+ \ta_{\{[\{2,4\},3],[\{3\},1],[\{1,5\},2]\}}+\\ \ta_{
\{[\{2,5\},3],[\{3\},1],[\{1,4\},2]\}}+
\ta_{\{[\{2,5\},3],[\{4\},1],[\{1,3\},2]\}}
+\\ \ta_{\{[\{3,4\},3],[\{2\},1],[\{1,5\},2]\}}+\ta_{\{[\{3,5\},3],[\{2\},1],[\{1,4\},2]\}}.
\end{array}
\end{equation*}
}
Compare to the expansion of $\Psi_{\{[\{1,2\},3]\}}\Psi_{\{[\{1\},1],[\{2,3\},2]\}}$ in $\CPiQSym(a)$:
{ \begin{equation*}
\begin{array}{l}
\Psi_{\{[\{1\},1]\},[\{2,3\},2],[\{4,5\},3]\}}+\Psi_{\{[\{1\},1],[\{2,3\},3],[\{4,5\},2]\}}+\\
\Psi_{\{[\{1\},1],[\{2,4\},3],[\{3,5\},2]\}}+\Psi_{\{[\{1\},1],[\{2,5\},3],[\{3,4\},2]\}}+\\
\Psi_{\{[\{1\},1],[\{3,4\},3],[\{2,5\},2]\}}+\Psi_{\{[\{1\},1],[\{3,5\},3],[\{2,4\},2]\}}+\\
\Psi_{\{[\{1,2\},3],[\{3\},1],[\{4,5\},2]\}}+\Psi_{\{[\{1,3\},3],[\{2\},1],[\{4,5\},2]\}}
+\\
\Psi_{\{[\{1,3\},3],[\{4\},1],[\{2,5\},2]\}}+ \Psi_{\{[\{1,4\},3],[\{2\},1],[\{3,5\},2]\}}+\\\Psi_{\{[\{1,4\},3],[\{3\},1],[\{2,5\},2]\}}+ \Psi_{\{[\{1,5\},3],[\{2\},1],[\{3,4\},2]\}}+\\ \Psi_{
\{[\{1,5\},3],[\{3\},1],[\{2,4\},2]\}}+\Psi_{\{[\{1,5\},3],[\{4\},1],[\{2,3\},2]\}}+\\
\Psi_{\{[\{2,3\},3],[\{4\},1],[\{1,5\},2]\}}+ \Psi_{\{[\{2,4\},3],[\{3\},1],[\{1,5\},2]\}}+\\\Psi_{
\{[\{2,5\},3],[\{3\},1],[\{1,4\},2]\}}+
\Psi_{\{[\{2,5\},3],[\{4\},1],[\{1,3\},2]\}}
+\\ \Psi_{\{[\{3,4\},3],[\{2\},1],[\{1,5\},2]\}}+\Psi_{\{[\{3,5\},3],[\{2\},1],[\{1,4\},2]\}}.
\end{array}
\end{equation*}
}
\end{enumerate}
\end{example}

\subsection{Subalgebras generated by a finite set of elementary B-diagrams\label{subsec-finiteset}}
Let $\mathbb D$ be a finite set of elementary $B$-diagrams, i.e. diagrams $G$ satisfying $|G|=1$. We denote by $\mathbb B_{\mathbb D}$ the set of the $B$-diagrams which are constructed by using only elements in $\mathbb D$. Again, endowed with the product $|$, $\mathbb B_{\mathbb D}$ is the free monoid generated by the set $\mathbb G_{\mathbb D}$ indivisible diagrams constructed from $\mathbb D$. We consider the subalgebra $\mathcal B_{\mathbb D}$ of $\mathcal B$ generated by $\mathbb B_{\mathbb D}$. We observe that $\mathcal B_{\mathbb D}=\mathbb C\langle\mathbb G_\mathbb D\rangle$. The fact that $\mathbb D$ is finite implies that the dimension of the subspace $\mathbb B_{\mathcal D}^{[n]}$ spanned by the diagrams $G\in\mathbb B_{\mathbb D}$, such that $|G|=n$, is finite. The dimension $\beta^{\mathbb D}_{n}$ of $\mathbb B_{\mathcal D}^{[n]}$ are computed by recurrence as follows. First consider the number $\kappa_{n,p}^{\mathbb D}$ of $B$-diagrams $G$ in $\mathbb B_{\mathbb D}$ with $|G|=n$ and $f^\uparrow(G)=p$.
Recall the alternative inductive definition of a $B$-diagram given in \cite{BCL2016} :
\begin{lemma}\label{recBDiag}
Let $G=(n,\lambda,\varphi,F^\uparrow,F_\downarrow)$ be a B-diagram. Either $G=\varepsilon$ or there exists a B-diagram
$V=(1,[p],\emptyset\rightarrow\emptyset,F^\uparrow_v,{F_v}_\downarrow)$, a B-diagram $\tilde G=(n-1,\tilde \lambda,\tilde\varphi,\tilde F^\uparrow,\tilde F_\downarrow,)$ and two sequences $1\leq a_1<\dots<a_k\leq p$ and $1\leq b_1,\dots,b_k\leq \omega(\tilde G)$ distinct satisfying
\begin{equation}
G=\begin{array}{c}
\tilde G\\
\mcomp{a_1,\dots,a_k}{b_1,\dots,b_k}\\
V
\end{array}.
\end{equation}
\end{lemma}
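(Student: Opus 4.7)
The plan is to construct $V$ and $\tilde G$ by explicitly peeling off the first vertex of $G$. Assuming $G\neq\varepsilon$, I set $p:=\lambda_1$. Because $G$ is a $B$-diagram, the constraint $\block_\lambda(a)<\block_\lambda(\varphi(a))$ forbids any edge from having its target in vertex~$1$, so $E_\downarrow(G)\cap\llbracket 1,p\rrbracket=\emptyset$. I then list the edges emanating from vertex~$1$ as $\{a_1<\cdots<a_k\}:=E^\uparrow(G)\cap\llbracket 1,p\rrbracket$ and set $b_i:=\varphi(a_i)-p$ for $i=1,\dots,k$; these lie in $\llbracket 1,\omega(G)-p\rrbracket$ and are pairwise distinct by injectivity of $\varphi$.

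Next I define the two $B$-diagrams. The elementary diagram is $V:=(1,[p],\emptyset\to\emptyset, F^\uparrow_v, F_{v,\downarrow})$ with $F^\uparrow_v:=(F^\uparrow\cap\llbracket 1,p\rrbracket)\cup\{a_1,\dots,a_k\}$ and $F_{v,\downarrow}:=F_\downarrow\cap\llbracket 1,p\rrbracket$; it is trivially a $B$-diagram since $V$ carries no edges. For $\tilde G=(n-1,\tilde\lambda,\tilde\varphi,\tilde F^\uparrow,\tilde F_\downarrow)$ I set $\tilde\lambda:=(\lambda_2,\dots,\lambda_n)$, declare $\tilde\varphi(i):=\varphi(i+p)-p$ on $\{i\mid i+p\in E^\uparrow(G)\}$, and put $\tilde F^\uparrow:=\{j-p\mid j\in F^\uparrow\cap\llbracket p+1,\omega(G)\rrbracket\}$ together with $\tilde F_\downarrow:=\{j-p\mid j\in F_\downarrow\cap\llbracket p+1,\omega(G)\rrbracket\}\cup\{b_1,\dots,b_k\}$. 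The inequality $\block_\lambda(i+p)<\block_\lambda(\varphi(i+p))$ valid in $G$ descends to $\block_{\tilde\lambda}(i)<\block_{\tilde\lambda}(\tilde\varphi(i))$, so $\tilde G$ is indeed a $B$-diagram, and by construction $a_i\in F^\uparrow_v$ and $b_i\in\tilde F_\downarrow$.

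It remains to check that $G$ equals the composition $\tilde G\mcomp{a_1,\dots,a_k}{b_1,\dots,b_k}V$. By direct inspection of the formulas defining the composition, the resulting weight vector is $[p,\tilde\lambda_1,\dots,\tilde\lambda_{n-1}]=[\lambda_1,\dots,\lambda_n]$; the reconstructed $\varphi''$ satisfies $\varphi''(i+p)=\tilde\varphi(i)+p=\varphi(i+p)$ on the edges surviving in $\tilde G$ and $\varphi''(a_i)=b_i+p=\varphi(a_i)$ on the reconnected edges, so $\varphi''=\varphi$. Finally $F''^\uparrow=(F^\uparrow_v\setminus\{a_1,\dots,a_k\})\cup(\tilde F^\uparrow+p)=F^\uparrow$ and $F''_\downarrow=F_{v,\downarrow}\cup((\tilde F_\downarrow\setminus\{b_1,\dots,b_k\})+p)=F_\downarrow$, and the cut half-edges inherit agreement automatically.

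The main obstacle is the bookkeeping underlying the previous step: one must \emph{add} the used outer half-edges $a_i$ to $F^\uparrow_v$ and the target positions $b_i$ to $\tilde F_\downarrow$, even though $a_i\notin F^\uparrow$ and $\varphi(a_i)\notin F_\downarrow$ in $G$. If instead one tried to take $\tilde G$ to be the naive sub-diagram $G[2,\dots,n]$ of Definition~\ref{subdiag}, the targets $\varphi(a_i)$ would end up as cut half-edges rather than free ones, and the composition could not reconnect them to $V$. This adjustment, which trades cut half-edges for free half-edges at the interface between $V$ and $\tilde G$, is the only delicate point of the argument.
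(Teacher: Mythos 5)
Your proof is correct. Note that the paper itself gives no proof of this lemma: it is merely recalled from the earlier paper \cite{BCL2016} (``Recall the alternative inductive definition of a $B$-diagram given in \cite{BCL2016}''), so there is no argument in the present text to compare yours against. Your construction is the natural one --- peel off vertex $1$ as $V$ with $p=\lambda_1$, take $a_1<\dots<a_k$ to be the sources of edges leaving vertex $1$ and $b_i=\varphi(a_i)-p$ their shifted targets --- and all the verifications (that $V$ and $\tilde G$ satisfy the axioms of Definition~\ref{DBDiag}, that $\varphi''=\varphi$, and that $F''^\uparrow=F^\uparrow$ and $F''_\downarrow=F_\downarrow$, using $\varphi(a_i)\in E_\downarrow(G)$ hence $\varphi(a_i)\notin F_\downarrow$) go through. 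You also correctly isolate the one delicate point, namely that $\tilde G$ cannot be the sub-diagram $G[2,\dots,n]$ of Definition~\ref{subdiag}: the half-edges $a_i$ and $b_i$ consumed by the composition must be reinstated as \emph{free} half-edges of $V$ and $\tilde G$ respectively, rather than left cut.
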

From Lemma \ref{recBDiag}, 
 we obtain
\begin{equation}
\kappa^{\mathbb D}_{n,p}=\sum_{G\in\mathbb D}\sum_{j=0}^{f_{\downarrow}(G)}j!\binom {f_{\downarrow}(G)}j\binom{p-f^{\uparrow}(G)+j}j\kappa^{\mathbb D}_{n-1,p-f^{\uparrow}(G)+j}
\end{equation}
 and so
 \begin{equation}\label{F1}
 \beta_n^{\mathbb D}=\sum_{p=0}^{mn}\kappa^{\mathbb D}_{n,p}
 \end{equation}
 where $m=\max\{f^\uparrow(G)\mid G\in\mathbb D\}$. Denoting by $d^{\mathbb D}_{n,p}$ the number  of $B$-diagrams $G$ in $\mathbb B_{\mathbb D}$ with $\omega(G)=n$ and $f^\uparrow(G)=p$. Lemma \ref{recBDiag} can be also derived as
 \begin{equation}
 d_{n,p}^{\mathbb D}=\sum_{G\in\mathbb D}\sum_{j=0}^{f_{\downarrow}(G)}j!\binom {f_{\downarrow}(G)}j\binom{p-f^{\uparrow}(G)+j}jd^{\mathbb D}_{n-|G|,p-f^{\uparrow}(G)+j}.
 \end{equation}
 Notice that this formula remains valid even if $\mathbb D$ is an infinite set. Furthermore, in the special case where $\mathbb D$ is the set of all elementary $B$-diagrams, we recover formula \eqref{enumdiag}.
 
 The number $\alpha_n^{\mathbb D}$ of $B$-diagrams $G\in\mathbb B_{\mathbb D}$ such that $\omega(G)=n$ is given by
 \begin{equation}\label{F2}
 \alpha_n^{\mathbb D}=\sum_{k=0}^nd_{n,k}^{\mathbb D}.
 \end{equation}

 Obviously, $\Delta$ sends $\mathcal B_{\mathbb D}$ to $\mathcal B_{\mathbb D}\otimes \mathcal B_{\mathbb D}$. So $\mathcal B_{\mathbb D}$ is a co-commutative connected Hopf algebra. The dual algebra $\mathcal B^*_{\mathbb D}$ is the quotient $\mathcal B^*_{\mathbb D}/_{J_{\mathbb D}}$ where $J_{\mathbb D}$ is the ideal generated by the elements $\{D_G\mid G\not\in\mathbb B_{\mathbb D}\}$. The product $D_G\Cup D_{G'}$ with $G, G'\in \mathbb B_{\mathbb D}$ is a linear combination of $B$-diagrams in $\mathbb B_{\mathbb D}$. So we fall in the scope of Section \ref{App_HopfPol} and, by Proposition \ref{prop_subdual}, we deduce that the algebra $\mathcal B^*_{\mathbb D}$ is isomorphic to the subalgebra of $\mathcal B^*$ generated by $\{D_G\mid G\in\mathbb B_{\mathbb D}\}$.

Let us denote by $c^{\mathbb D}_n$ the number of connected $B$-diagrams in $\mathbb B_{\mathbb D}$ with $n$ vertices and set $c^{\mathbb D}:=(c^{\mathbb D}_n)_{n\geq 1}$.
Comparing \eqref{dualproduct} and \eqref{Psiprod}, we find
\begin{theorem}\label{CPIQ}
The algebras $\mathrm{C\Pi QSym}(c^{\mathbb D})$ and $\mathcal B_{\mathbb D}^*$ are isomorphic
\end{theorem}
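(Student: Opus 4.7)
The plan is to exhibit an explicit isomorphism of Hopf algebras $\mathcal B_{\mathbb D}^{*} \cong \CPiQSym(c^{\mathbb D})$, obtained by dualizing a more transparent isomorphism $\mathcal B_{\mathbb D} \cong \CWSym(c^{\mathbb D})$ at the level of multiplicative bases, and then cross-checking against the direct comparison of the formulas \eqref{dualproduct} and \eqref{Psiprod} highlighted in the statement.

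First, I fix, for each $n \ge 1$, an enumeration of the $c_n^{\mathbb D}$ connected $B$-diagrams in $\mathbb B_{\mathbb D}$ of size $n$. For any $G \in \mathbb B_{\mathbb D}$ with connected components $C_1, \dots, C_k$ (as sub-$B$-diagrams on vertex sets $V_1, \dots, V_k \subseteq \{1, \dots, |G|\}$), let $\chi(C_i) \in \{1, \dots, c_{|C_i|}^{\mathbb D}\}$ denote the index of $C_i$ in the chosen enumeration, and set $\Xi(G) := \{[V_i, \chi(C_i)] : 1 \le i \le k\}$. Since a colored block encodes both a vertex subset and the isomorphism type of a connected $B$-diagram on that many vertices, $\Xi$ is a size-preserving bijection between $\mathbb B_{\mathbb D}$ and $\mathcal{CP}(c^{\mathbb D})$. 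Crucially, juxtaposition of $B$-diagrams commutes with the shifted union of colored partitions: $\Xi(G|G') = \Xi(G) \uplus \Xi(G')$, because the connected components of $G|G'$ are the components of $G$ followed by those of $G'$, suitably shifted.

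The linear extension $\Phi^G \mapsto \Phi^{\Xi(G)}$ is therefore an algebra isomorphism $\mathcal B_{\mathbb D} \to \CWSym(c^{\mathbb D})$ (both products being shifted concatenation on their $\Phi$-bases). It is also a coalgebra morphism: formula \eqref{coprodPhi} for $\Delta \Phi^G$ translates term-by-term into the splitting coproduct for $\Phi^{\Xi(G)}$ in $\CWSym(c^{\mathbb D})$, since pairs $(I, J) \in \mathrm{Split}(G)$ correspond bijectively to ordered partitions of the colored blocks of $\Xi(G)$ into two disjoint groups (both being unions of connected components of $G$), and the sub-$B$-diagram $G[I]$ corresponds via $\Xi$ to the standardization of the sub-colored-partition of $\Xi(G)$ supported on $I$. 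Dualizing this Hopf algebra isomorphism yields the desired isomorphism $\mathcal B_{\mathbb D}^{*} \cong \CPiQSym(c^{\mathbb D})$, sending $\Psi_G \mapsto \Psi_{\Xi(G)}$.

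The ``comparison of \eqref{dualproduct} and \eqref{Psiprod}'' can then be read as a direct verification at the level of the $D_G$-basis for the product: each shuffle $\sigma$ in \eqref{dualproduct} places the $G$-vertices and $G'$-vertices at disjoint isolated positions of $(G|G')^{\sigma}$ (no edges cross since $G|G'$ itself has none), giving under $\Xi$ exactly an ordered pair of disjoint sub-colored-partitions of $\Xi((G|G')^{\sigma})$ standardizing to $\Xi(G)$ and $\Xi(G')$; counting shuffles producing a given $B$-diagram thus recovers the multiplicity $\alpha_{\Xi(G),\Xi(G')}^{\Pi}$ of \eqref{Psiprod}. The main subtlety sits here: shuffles are labeled positional data, so distinct shuffles producing the same underlying $B$-diagram still contribute separately — matching precisely the count of ordered disjoint pairs used in defining $\alpha$, with no overcounting from symmetries among the connected components.
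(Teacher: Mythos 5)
Your proposal is correct, and its main line is genuinely different from the proof the paper gives, although the paper acknowledges your route in a footnote. The paper's proof is the direct, constructive one: it defines $\chi(D_G)=\Psi_{\colset(G)}$ on the $D_G$-basis and verifies, by exactly the shuffle computation you sketch in your final paragraph, that the multiplicity of a given diagram $G''$ in $D_G\Cup D_{G'}$ equals $\alpha^{\colset(G'')}_{\colset(G),\colset(G')}$; it never passes through $\CWSym(c^{\mathbb D})$. You instead first build the primal isomorphism $\Phi^G\mapsto\Phi^{\Xi(G)}$ from $\mathcal B_{\mathbb D}$ to $\CWSym(c^{\mathbb D})$ (checking that $\Xi$ intertwines juxtaposition with $\uplus$ and $\mathrm{Split}$ with the splitting coproduct) and then dualize; this is essentially the instantiation of Proposition~\ref{prop_subdual} that the paper's footnote alludes to without carrying out. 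Your route yields a Hopf-algebra (not merely algebra) isomorphism with less computation, at the price of making explicit the bijectivity of $\Xi$, which you rightly address: a diagram of $\mathbb B_{\mathbb D}$ is reconstructible from the supports and isomorphism types of its connected components, and every colored partition in $\mathcal{CP}(c^{\mathbb D})$ arises this way. One point worth flagging: the two routes produce \emph{different} explicit isomorphisms --- yours sends the dual basis $\Psi_G$ of the multiplicative basis $\Phi^G$ to $\Psi_{\Xi(G)}$, whereas the paper's sends $D_G$ to $\Psi_{\colset(G)}$, and since $D_G\neq\Psi_G$ in general these are distinct linear maps; both are valid, and the statement only asserts existence. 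Your closing observation on the shuffle count is also the crux of the paper's argument: shuffles $\sigma$ with $(G|G')^{\sigma}=G''$ correspond bijectively to pairs $(I,J)\in\mathrm{Split}(G'')$ with $G''[I]=G$ and $G''[J]=G'$, which is precisely the set counted by $\alpha^{\Xi(G'')}_{\Xi(G),\Xi(G')}$.
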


\begin{proof}\footnote{Although this result follows directly from Proposition~\ref{prop_subdual}, we also give a fully constructive proof.} 
To each $G \in \mathbb B_{\mathbb D}$, we associate a number $\eta(G) \in \{1, \dots,c^{\mathbb D}_{|G|}\}$, in such a way that the maps 
\begin{equation}
\left\{\begin{array}{llll}   \{G \in  \mathcal B_{\mathbb D}\mid & |G|= n\} & \longrightarrow & \{1, \dots,c^{\mathbb D}_{|G|} \} \\
 &  G & \longrightarrow & \eta(G) \end{array}\right.
\end{equation}
are bijection. Noticing that for any $G\in \mathbb B_{\mathbb D}$, the set
\begin{equation}\colset(G)=\bigg\{[I, \eta(G[I])]\mid I \in \mathrm{Connected}(G) \bigg\}\end{equation}
is a colored set partition\footnote{We use a slight abuse of notation by assimilating a strictly increasing sequence 
$[i_1,\dots,i_k]$ to the set $\{i_1,\dots,i_k\}$.}, we define the following linear map:  
\begin{equation}
\chi:\left\{\begin{array}{llll}   & B_{\mathbb D}* & \longrightarrow & \mathrm{C\Pi QSym}(c^{\mathbb D}) \\
 &  D_G & \longrightarrow & \Psi_{\colset(G)}\end{array}.\right.
\end{equation}
Let us show that $\chi$ is a morphism. We have, for any pair of $B$-diagrams $(G,G')$
\begin{equation}
\begin{array}{rcl}
\chi(D_{G} \Cup D_{G'}) & =& \displaystyle\sum_{\sigma \in 1 \dots n \shuffle n+1 \dots n + n'} \chi \bigg(D_{(G|G'})^{\sigma} \bigg) \\
& =& \displaystyle \sum_{\sigma \in 1 \dots n \shuffle n+1 \dots n + n'} \Psi_{ \colset\big(( G|G' )^{\sigma}\big)} ,
\end{array}
\end{equation}
with $n=|G|$ and $n'=|G'|$.\\
But
\begin{equation}\begin{array}{rcl}
\colset((G|G')^\sigma)&=&\displaystyle\{[I,\eta((G|G')^\sigma[I])\mid I\in \mathrm{Connected}((G|G')^\sigma)\}\\
&=&\displaystyle
\big\{[I,\eta((G|G')^\sigma[I])\mid I\in \mathrm{Connected}((G|G')^\sigma)\\
&&\qquad\qquad\qquad\mbox{ and }(G|G')^\sigma)[I]=G[I^{\sigma^{-1}}]\big\}\cup\\&&
\displaystyle\
\big\{[I,\eta((G|G')^\sigma[I])\mid I\in \mathrm{Connected}((G|G')^\sigma)\\
  &&\qquad\qquad\qquad\mbox{ and }(G|G')^\sigma)[I]=G'[I^{\sigma^{-1}}]\big\}\\
&=& \{[I^{\sigma},\eta(G[I])]\mid I\in\mathrm{Connected}(G)\}\cup\\&&
\{[I^{\sigma},\eta(G'[I])]\mid I\in\mathrm{Connected}(G')\}\end{array}
\end{equation}
Hence,
$
\colset((G|G')^\sigma)\in \colset(G)\Cup\colset(G').
$ So we have
\begin{equation}
\chi(D_G\Cup D_{G'})=\sum_{\Pi\ \in\ \colset(G)\Cup\colset(G')} \mu_{\Pi}\Psi_\Pi
\end{equation}
with 
{
\begin{equation}\begin{array}{rcl}
\mu_\Pi&=&\displaystyle\#\Big\{\sigma\in 1\dots n\shuffle n+1\dots n+n'\mid \colset((G|G')^\sigma)=\Pi,\\&& (G|G')^\sigma[\sigma^{-1}(1),\dots,\sigma^{-1}(n)]=G,\\
&&\displaystyle(G|G')^\sigma[\sigma^{-1}(n+1),\dots,\sigma^{-1}(n+n']=G' \Big\}\\
&=&\displaystyle\#\Big\{\sigma\in 1\dots n\shuffle n+1\dots n+n'\mid \colset((G|G')^\sigma)=\Pi,\\&& \colset((G|G')^\sigma[\sigma^{-1}(1),\dots,\sigma^{-1}(n)])=\colset(G),\\
&&\displaystyle\colset((G|G')^\sigma[\sigma^{-1}(n+1),\dots,\sigma^{-1}(n+n'])=\colset(G') \Big\}
\\&=&\big\{(\hat \Pi,\hat\Pi')\mid \std(\hat\Pi)=\colset(G), \std(\hat\Pi')=\colset(G'), \hat\Pi\cup\hat\Pi'=\Pi\big\}\\
&=&\alpha_{\colset(G),\colset(G')}^\Pi
\end{array}
\end{equation}}
It follows that
\begin{equation}
\begin{array}{rcl}
\chi(D_G\Cup D_{G'})&=&\displaystyle\sum_{\Pi\ \in\ \colset(G)\Cup\colset(G')} \alpha_{\colset(G),\colset(G')}^\Pi\Psi_\Pi\\
&=&\Psi_{\colset(G)}\Psi_{\colset(G')}=\chi(D_G)\chi(D_{G'}).
\end{array}
\end{equation}
\end{proof}
\begin{example}\ \\ \rmfamily
Let $\mathbb D$ be the set of generators $(1,[1],\sqcup ,\{1\},\{1\})$ and $(1,[2],\sqcup\sqcup,\{1,2\},\{1,2\})$.
Consider the diagrams \begin{equation*}G_1=(2,[2,2],43\sqcup\sqcup,\{3,4\},\{1,2\})\mbox{ and }
G_2=(1,[2],\sqcup\sqcup ,\{1,2\},\{1,2\}),\end{equation*} and
compare the formula
\begin{equation*}
\includegraphics[scale=0.23]{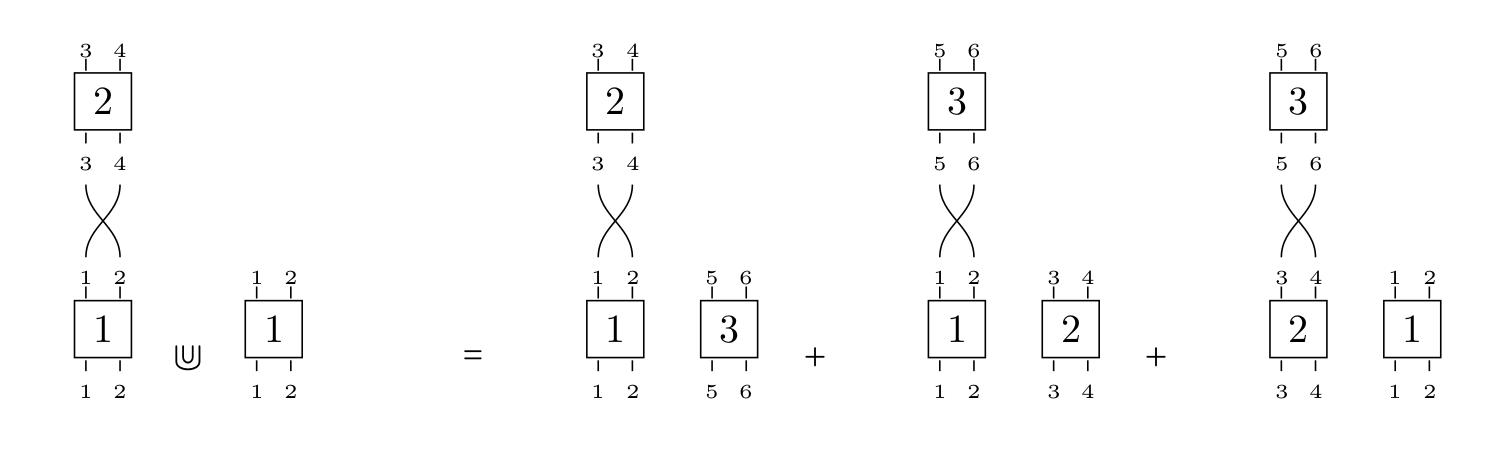}
\end{equation*}

 to the following ones
 {
\begin{equation*}
\begin{array}{l}
\Psi_{\{ [\{1,2\},\eta(G_1)] \}} \Psi_{\{ [\{1\},\eta(G_2)] \}} = \Psi_{\{ [\{1,2\},\eta(G_1)], [\{3\},\eta(G_2)] \}} +\\\qquad\qquad \Psi_{\{ [\{1,3\},\eta(G_1)], [\{2\},\eta(G_2)] \}} + \Psi_{\{ [\{2,3\},\eta(G_1)], [\{1\},\eta(G_2)] \}}\end{array}
\end{equation*}
}
which holds in the algebra of colored set partitions.\\
Notice that, since the number of connected $B$-diagrams of size $1$  is $2$, the number of connected $B$-diagrams of size $2$ is $11$, we can set $\eta(G_1)=6\leq 11$ and $\eta(G_2)=2\leq 2$ (but  any other choice of values would suit $\eta$).
\end{example}

Let $B_{n,k}^{\mathbb D}$ be the sum of the $D_G$ where $G\in \mathbb B_{\mathbb D}$ is a $B$-diagram with exactly $k$ connected components. As a consequence of Theorem \ref{CPIQ}, one has
\begin{equation}
\sum_{n}B_{n,k}^{\mathbb D}z^n=\frac1{k!}\Bigg(\sum_{n}B_{n,1}^{\mathbb D}z_n\Bigg)^{\Cup k}.
\end{equation}
Hence, following the mechanism of specialization as described in  \cite{ABCLM}, one obtains
\begin{corollary}\label{betaser}
\begin{equation}\sum_{n\geq 0}\beta^{\mathbb D}_n\frac{z^n}{n!}=\exp\Bigg\{\sum_{n\geq 1}c^{\mathbb D}_n\frac{z^n}{n!}\Bigg\}.\end{equation}
\end{corollary}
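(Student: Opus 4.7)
The plan is to deduce the identity from the $\Cup$-algebra equation $\sum_{n}B_{n,k}^{\mathbb D}z^n=\tfrac{1}{k!}\bigl(\sum_{n}B_{n,1}^{\mathbb D}z^n\bigr)^{\Cup k}$ stated just above via a standard specialization argument: I would introduce a multiplicative character of $(\mathcal B_{\mathbb D}^*,\Cup)$ that converts the shuffle-like product $\Cup$ into the ordinary multiplication of exponential generating series.

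Concretely, I would first define, on the graded completion $\widehat{\mathcal B_{\mathbb D}^*}=\prod_{n}\mathcal B_n^*$, the linear map $\phi\colon\widehat{\mathcal B_{\mathbb D}^*}\to\mathbb C[[z]]$ by $\phi(D_G)=\tfrac{z^{|G|}}{|G|!}$, and verify that $\phi$ is an algebra morphism from $(\widehat{\mathcal B_{\mathbb D}^*},\Cup)$ to $(\mathbb C[[z]],\cdot)$. This is an immediate consequence of \eqref{dualproduct}: for any $G,G'\in\mathbb B_{\mathbb D}$, the element $D_G\Cup D_{G'}$ is a sum indexed by the $\binom{|G|+|G'|}{|G|}$ shuffles $\sigma$, each summand $D_{(G|G')^\sigma}$ having degree $|G|+|G'|$ (by Lemma~\ref{DualBDiag}). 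Hence $\phi(D_G\Cup D_{G'})=\binom{|G|+|G'|}{|G|}\tfrac{z^{|G|+|G'|}}{(|G|+|G'|)!}=\phi(D_G)\phi(D_{G'})$, regardless of whether distinct shuffles happen to yield the same $B$-diagram.

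Second, I would apply $\phi$ to the shuffle identity and then sum over $k$. Since $B_{n,1}^{\mathbb D}$ is the sum of the $c_n^{\mathbb D}$ elements $D_G$ with $|G|=n$ and $G$ connected, one has $\phi\bigl(\sum_{n\ge 1}B_{n,1}^{\mathbb D}\bigr)=\sum_{n\ge 1}c_n^{\mathbb D}\tfrac{z^n}{n!}=:C(z)$, and multiplicativity yields $\phi\bigl(\sum_{n}B_{n,k}^{\mathbb D}\bigr)=\tfrac{C(z)^k}{k!}$. Summing over $k\ge 0$, the left-hand side reassembles into $\sum_n\bigl(\sum_k \#\{G\in\mathbb B_{\mathbb D}^{[n]}:G\text{ has }k\text{ components}\}\bigr)\tfrac{z^n}{n!}=\sum_n\beta_n^{\mathbb D}\tfrac{z^n}{n!}$, while the right-hand side becomes $\sum_{k\ge 0}\tfrac{C(z)^k}{k!}=\exp(C(z))$, which is exactly the claimed identity.

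The only technical point is the usual formal-series justification that $\phi$ is well defined and multiplicative on the completion, and that the sum over $k$ converges in $\mathbb C[[z]]$; the latter is automatic because the contribution of $B_{n,k}^{\mathbb D}$ to the coefficient of $z^n$ vanishes once $k>n$. Alternatively, and more in the spirit of the sentence preceding the corollary, one can transport the argument through Theorem~\ref{CPIQ} and use the analogous character $\Psi_{\Pi}\mapsto\tfrac{z^{|\Pi|}}{|\Pi|!}$ on $\CPiQSym(c^{\mathbb D})$, which is precisely the specialization mechanism of \cite{ABCLM} invoked in the statement.
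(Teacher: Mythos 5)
Your proof is correct and follows the same route the paper intends: the paper simply invokes ``the mechanism of specialization as described in \cite{ABCLM}'' applied to the identity $\sum_{n}B_{n,k}^{\mathbb D}z^n=\frac1{k!}\bigl(\sum_{n}B_{n,1}^{\mathbb D}z^n\bigr)^{\Cup k}$, and your character $\phi(D_G)=z^{|G|}/|G|!$ is exactly that specialization made explicit. The multiplicativity check via the $\binom{|G|+|G'|}{|G|}$ shuffles, and the observation that it holds even when distinct shuffles produce the same diagram, correctly supplies the detail the paper leaves to the reference.
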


\section{Examples}\label{sec-examples}
In the aim to illustrate our purpose, let us consider several special cases.
\subsection{ Recovering the Hopf algebras WSym and BWSym}
Setting $\mathbb D_W=\{(1,[1],\sqcup,\{1\},\{1\})\}$ we recover the results of Section $5.1$ in \cite{BCL2016}. For any $n$ there is a unique connected $B$-diagram which is $(n,[1,\cdots,1],23\cdots n\sqcup,\{n\},\{1\})$. Hence, $c^{\mathbb D_W}_n=1$. Since the weight and the size of the unique generator are equal, we have $\beta^{\mathbb D_W}_n=\alpha^{\mathbb D_W}_n$. It is easy to recover that $\beta^{\mathbb D_W}_n=b_n$, the $n$th Bell number and $c^{\mathbb D_W}_n=1$ since for any connected component $I=[i_1,\cdots,i_k]$ of a $B$-diagram $G$, we have $G[I]=(n,[1,\cdots,1],23\cdots n\sqcup,\{n\},\{1\})$. Hence, for any set partition $\Pi=\{\pi_1,\cdots,\pi_\ell\}$ there exists a unique $B$-diagram $G$, such that $\mathrm{Connected}(G)=\Pi$.
Note also that $\kappa_{n,p}^{\mathbb D_W}=d_{n,p}^{\mathbb D_W}=S_{n,p}$, the Stirling number of the second kind. We showed in \cite{BCL2016} that the Hopf algebra $\mathcal B_{\mathbb D_W}$ is isomorphic to $\WSym$.

 Setting $\mathbb D_{BW}=\{(1,[2],\sqcup\sqcup,\{1,2\},\{1\})\}$, we recover the results of Section $5.2$ in \cite{BCL2016}. One has $\beta^{\mathbb D_{BW}}_n=L_{n}$, the number of set partitions into lists of $\llbracket 1,n\rrbracket$, $\alpha^{\mathbb D_{BW}}_{2n}=\beta^{\mathbb D_{BW}}_n$, and $\alpha^{\mathbb D_{BW}}_{2n+1}=0$. Notice also that $c^{\mathbb D_{BW}}_n=n!$, this follows from the bijection between the connected elements with $n$ vertices and the permutations of $\S_n$ described in Section $5.2$ in \cite{BCL2016}.  Observe also that the number of connected components of a $B$-diagram $G\in \mathbb B_{\mathbb D_{BW}}$ equals $f^{\uparrow}(G)-n$. Hence, $\kappa_{n,k}^{\mathbb D_{BW}}=0$ if $k<n$ and $\kappa_{n,k}^{\mathbb D_{BW}}=L_{n,k-n}$, the Lah number \footnote{Also called  Stirling number of third kind.} \cite{Lah} which counts the number of set partitions of $\llbracket 1,n\rrbracket$ into $k$ lists, otherwise. Also  we have $d_{2n,p}^{\mathcal D_{BW}}=\kappa_{n,k}^{\mathbb D_{BW}}$ and $d_{2n+1,p}^{\mathcal D_{BW}}=0$. We also recall \cite{BCL2016} that the Hopf algabra $\mathcal B_{\mathbb D_{BW}}$ is isomorphic to $\BWSym$.
\subsection{A simple example involving Fibonacci numbers }
Let $\mathbb F=\{G_1=(1,[1],\sqcup, \emptyset,\{1\}),G_2=(1,[2],\sqcup\sqcup, \emptyset,\{1,2\})\}$. The monoid $\mathbb B_{\mathbb F}$ is freely generated by $\mathbb F$. Indeed, the only connected diagrams are the elements of $\mathbb F$. So each $B$-diagram can be seen as a word on two letters, a letter of weight $1$ and the other of weight $2$. Hence, with no surprise, $\beta^{\mathbb F}_n=2^n$, because it enumerates the words by length, and $\alpha^{\mathbb F}_n=F_n$, the $n$th Fibonacci number, because it enumerates the words by weight. To be more precise a generic $B$-diagrams is under the form
$(n,[\lambda_1,\cdots,\lambda_n],\sqcup,\emptyset,\{1,\dots,\lambda_1+\cdots+\lambda_n\})$ , with $\lambda_1,\dots, \lambda_n\in\{1,2\}$. So it is characterized only by the sequence $[\lambda_1,\dots,\lambda_n]$, assimilated to a word over the alphabet $\{1,2\}$.  The  algebra $\mathcal B_{\mathbb F}$ is isomorphic to the free algebra generated $\mathbb C\langle a_1,a_2\rangle$ by two letters. The explicit isomorphism sends $G_1$ to $a_1$ and $G_2$ to $a_2$. Indeed, the number labeling the vertices give the position of the letter in the word.
For instance, one has
\begin{equation}
\includegraphics[scale=0.25]{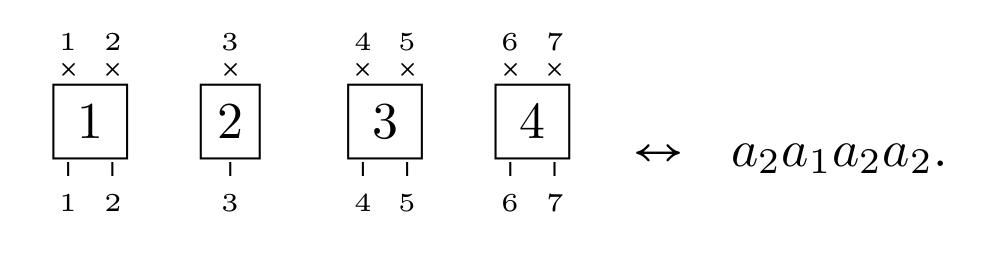}
\end{equation}
The considered graduation sends a word to the sum of the indices of its letters \begin{equation}a_{i_1}\cdots a_{i_n}\rightarrow i_1+\cdots+i_n.\end{equation} The fact that the dimensions of the graded component are equal to the Fibonacci numbers is a very classical exercise. Also classically, the free algebra endowed with the co-shuffle co-product is a Hopf algebra and it is easy to show that is isomorphic to Hopf algebra $\mathcal B_{\mathbb F}$. The gradued dual algebra $\mathcal B_{\mathbb F}^*$ is isomorphic to the shuffle algebra $(\mathbb C\langle a_1,a_2\rangle,\shuffle)$. For instance compare
\begin{equation}a_1a_2\shuffle a_1 =a_1a_2a_1+2a_1a_2 \end{equation}
to
\begin{equation}
\includegraphics[scale=0.22]{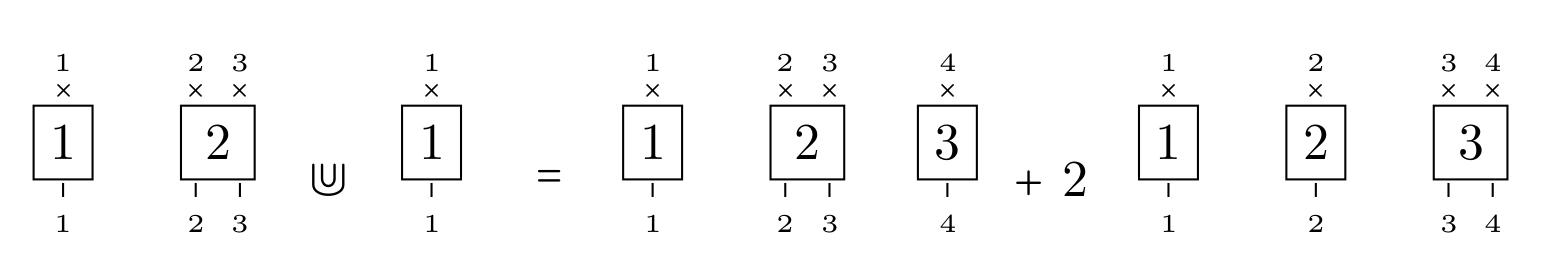}
\end{equation}
\subsection{An algebra generated by an inner cut element  and an outer cut element}
Let $\mathbb S=\{(1,[1],\sqcup,\emptyset,\{1\}),(1,[1],\sqcup,\{1\},\emptyset)\}$. There are only three connected $B$-diagrams in $\mathbb B_{\mathbb S}$: 
\begin{equation}
A_1:=(1,[1],\sqcup,\emptyset,\{1\}), A_2:=(1,[1],\sqcup,\{1\},\emptyset), \mbox{ and } B:=(2,[1,1],2\sqcup,\emptyset,\emptyset).
\end{equation}
Hence, the sequence of the $c_{n}^{\mathbb S}$'s is $(2,1,0,\dots)$. Applying corollary \ref{betaser}, we find \begin{equation}\beta_{n}^{\mathbb S}=\alpha_n^{\mathbb S}=1, 2, 5, 14, 43, 142, 499, 1850, 7193, 29186,\dots.\end{equation}
These numbers appear in \cite{Sloane} (sequence A005425) and are related to several combinatorial problems. The generating series of the numbers $d^{\mathbb S}_{n,k}=\kappa^{\mathbb S}_{n,k}$ is
\begin{equation}
\sum_{n,k}d^{\mathbb S}_{n,k}\frac{z^n}{n!}t^k=\exp\Bigg\{(1+t)z+\frac{z^2}2\Bigg\}.
\end{equation}
To illustrate Theorem \ref{CPIQ}, we consider the algebra $\mathrm{C\Pi QSym}(2,1,0,\dots)$. Compare  the equality
\begin{align}
\sum_{\genfrac{}{}{0pt}{}{\Pi\vDash n}{ \#\Pi=k}}\Psi_{\Pi}=[z^n]\frac1{k!}\Big((\Psi_{\{[\{1\},1]\}}+\Psi_{\{[\{1\},2]\}})z+\Psi_{\{[\{1,2\},1]\}}z^2\Big)^k,
\end{align}
to
\begin{align}
B^{\mathbb S}_{n,k}=[z^n]\frac1{k!}\Big((D_{A_1}+D_{A_2})z+D_Bz^2\Big)^{\Cup k}.
\end{align}
For instance, $B^{\mathbb S}_{3,2}=(D_{A_1}+D_{A_2})\Cup D_B$, that is the formal sum of the diagrams pictured in figure \ref{B32S}, corresponds to the element
\begin{equation}\begin{array}{l}\Psi_{\{[\{1,2\},1],[\{3\},1]\}}+
\Psi_{\{[\{1,3\},1],[\{2\},1]\}}+\Psi_{\{[\{2,3\},1],[\{1\},1]\}}\\
+
\Psi_{\{[\{1,2\},1],[\{3\},2]\}}+
\Psi_{\{[\{1,3\},1],[\{2\},2]\}}+\Psi_{\{[\{2,3\},1],[\{1\},2]\}}
 .\end{array}
 \end{equation}
\begin{figure}[ht]
\centering
\includegraphics[scale=0.28]{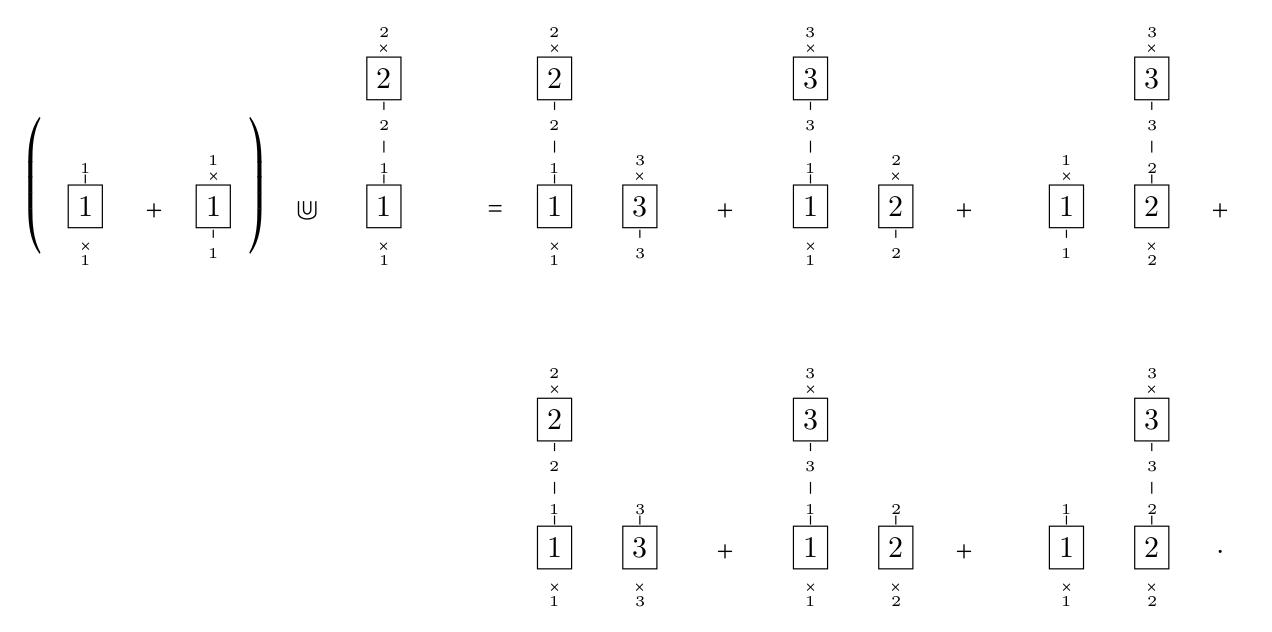}
\caption{$B$-diagrams occurring in $B^{\mathbb S}_{3,2}$ \label{B32S}}
\end{figure} 
\subsection{Hopf algebras of forests}
 By omitting the cut inner half edges, we regard the $B$-diagrams under the form \begin{equation}T_m = (1,[m],\sqcup\cdots\sqcup,\{1,\dots,m\},\{1\})\end{equation} as elementary trees. Hence, 
the $B$-diagrams obtained by connecting or juxtaposing this kind of vertices are forests of increasing trees. Consider a finite set $E$ of  positive integers. We define $\mathbb T(E)=\big\{(1,[m],\sqcup\cdots\sqcup,\{1,\dots,m\},\{1\})\mid m\in E\big\}$. The basis of $\mathcal B_{\mathbb T}$  are indexed by forests of increasing trees and the connected $B$-diagrams in $\mathbb B_{\mathbb T}$ are then regarded as increasing trees. The exponential generating function satisfies the differential equation
\begin{equation}
 \frac d{dz}f_E(z)=\sum_{m\in E}f_E(z)^m,
\end{equation}
 with the initial condition $f_E(0)=1$.
And so,
$
\sum_{n\geq 0}\beta^{\mathbb T(E)}_n\frac{z^n}{n!}=\exp\big\{f_E(z)-1\big\}.$ 
Whilst a closed formula for $f_E$ is not known in the general case, some special interesting cases can be solved. For instance: $f_{\{m\}}(z)=(1-(m-1)z)^{\frac1{1-m}}$  and $f_{\{1,m\}}(z)=\big(2\exp\big\{(1-m)z\big\}-1\big)^{\frac1{1-m}}.$
The first generating functions corresponds to the case of an  algebra generated the set of $m$-ary increasing trees. Notice that these trees are also called $(m-1)$-permutations and appear in some combinatorial Hopf algebras (see \cite{NT}); the links between the algebras of our paper and those of \cite{NT} remain to be investigated. Obviously, the generating series of the coefficients $\displaystyle\kappa^{\mathbb T(\{m\})}_{n,k}$ is \begin{equation}\displaystyle\sum_{n,k} \kappa^{\mathbb T(\{m\})}_{n,k}\frac{z^n}{n!}t^k=\exp\big\{t(f_{\{m\}}(zt^{m-1})-1)\big\}\end{equation} and $d_{n,k}^{\mathbb T(\{m\})}=\kappa_{p,k}^{\mathbb T(\{m\})}$ if $n=3p$ and $0$ otherwise.

\section{Conclusion}
We have shown that the Hopf algebra of 
$B$-diagrams provides a natural algebraic framework for the diagrammatic structures arising in bosonic normal ordering and related problems in algebraic combinatorics. 
Beyond these explicit constructions, a central perspective suggested by our results is that the Hopf algebra of 
$B$-diagrams should enjoy certain universal properties with respect both to diagrammatic models of bosonic systems and to some combinatorial cocommutative Hopf algebras. However, these universal properties are not yet  rigorously characterized. Nevertheless, our results point toward a conjectural universality of the Hopf algebra of $B$-diagrams, which appears to govern a broad class of combinatorial Hopf algebras associated with diagrammatic structures from theoretical physics.

From an algebraic combinatorics point of view, this universality property is reflected in the fact that many cocommutative combinatorial Hopf algebras can be realized as subalgebras and/or quotients of the Hopf algebra of 
$B$-diagrams. We have provided several examples illustrating this phenomenon. It nevertheless remains to clarify the precise scope of this universality property, both at the algebraic level (in connection with Proposition \ref{prop_subdual}) and from a combinatorial viewpoint: Does it concern only objects related to set partitions? or does it extend to a broader framework? 

This touches upon a rather philosophical aspect of algebraic combinatorics, namely the idea of defining objects in a way that cuts across several disciplines. Here, the aim is to bring into resonance themes from theoretical physics, pure algebra, and combinatorics (and also analysis, through the use of formal power series). The guiding questions are: what is the abstract notion underlying these $B$-diagrams? how does it manifest itself in these different fields? and which conceptual dictionary should be used to ``speak'' about it within each of these fields?
Among the possible approaches toward this ideal goal, the study of the characters of the Hopf algebra of $B$-diagrams would be a particularly interesting direction to explore. Indeed, a detailed understanding of the character algebra would make it possible to develop its representation theory.
 Representation theory should provide a transversal tool that can be used in many areas. 
For example, it is well known that the creation and annihilation operators admit a realization as linear operators
$a^\dagger \sim \times x$ (multiplication by $x$) and $a \sim \frac{d}{dx}$ acting on the polynomial space $\mathbb C[x]$. 
These operators satisfy both the commutation relation~\eqref{eq_aa} and the Katriel formula~\eqref{eq-Katriel}. 
A natural question is whether this can be understood within a representation-theoretic framework for algebra $\mathcal B$. 
What are its irreducible representations and how could they provide a bridge between the different fields mentioned above? 
These are among the research directions we plan to pursue in future work.

\def\doi#1{doi: \href{https://doi.org/#1}{#1}}
\def\Zbl#1{Zbl: #1}
\def\MR#1{MR: #1}
\def\arXiv#1{arXiv: #1}
\def\isbn#1{isbn: #1}

\end{document}